\numberwithin{equation}{section}
\theoremstyle{plain}
\newtheorem{Proposition}[equation]{Proposition}
\newtheorem{Corollary}[equation]{Corollary}
\newtheorem*{Corollary*}{Corollary}
\newtheorem{Theorem}[equation]{Theorem}
\newtheorem*{Theorem*}{Theorem}
\newtheorem{Lemma}[equation]{Lemma}
\theoremstyle{definition}
\newtheorem{Example}[equation]{Example}
\newtheorem{Remark}[equation]{Remark}
\setlist[enumerate]{leftmargin=*}
\setlist[itemize]{leftmargin=*}
\setlist[enumerate,1]{label=(\alph*),font=\upshape}
\setlist[enumerate,2]{label=(\roman*),font=\upshape}
\def\C{\mathbb{C}}
\def\R{\mathbb{R}}
\def\D{\mathbb{D}}
\def\T{\mathbb{T}}
\def\K{\mathcal{K}}
\def\H{\mathcal{H}}
\def\phi{\varphi}
\renewcommand{\ker}{\operatorname{ker}}
\newcommand{\beqa}{\begin{eqnarray*}}
\newcommand{\eeqa}{\end{eqnarray*}}
\renewcommand{\geq}{\geqslant}
\renewcommand{\leq}{\leqslant}
\renewcommand{\subset}{\subseteq}
\title[Multipliers]{Multipliers and equivalence of functions, spaces, and operators}
\author[C\^{a}mara]{M. C. C\^{a}mara}
\address{Center for Mathematical Analysis, Geometry and Dynamical Systems, Department of Mathematics, Instituto Superior Tecnico, Universidade de Lisboa, 1049-001 Lisboa, Portugal}
\email{cristina.camara@tecnico.ulisboa.pt}
\author[Carteiro]{C. Carteiro}
\address{Center for Mathematical Analysis, Geometry and Dynamical Systems, Department of Mathematics, Instituto Superior Tecnico, Universidade de Lisboa, 1049-001 Lisboa, Portugal}
\email{carlos.carteiro@tecnico.ulisboa.pt}
\author[Ross]{W. T. Ross}
	\address{Department of Mathematics and Statistics, University of Richmond, Richmond, VA 23173, USA}
	\email{wross@richmond.edu}
\keywords{Inner functions, multipliers, Hardy spaces, Toeplitz operators, compressed shift, unitary equivalence, similarity, invariant subspaces}
\subjclass[2010]{47A15, 47A65, 30D55, 30J05}
\thanks{The work the first author was partially
supported by FCT/Portugal through CAMGSD, IST-ID, projects UIDB/04459/2020 and UIDP/04459/2020. The work of the second author was partially supported by FCT/Portugal through CAMGSD PhD fellowship UI/BD/153700/2022}
\begin{document}

\begin{abstract}
This paper offers a unified approach to determining when two generalized Toeplitz operators on $L^2$ are ``equivalent''.  
This will be done through multipliers between closed subspaces of $L^2$. Our discussion will include Toeplitz operators (and their duals) on the Hardy space, Hankel operators, asymmetric truncated Toeplitz operators, and dual asymmetric truncated Toeplitz operators. Along the way, there will be a discussion of ``equivalence'' of functions and kernels of generalized Toeplitz operators and a generalization of the Brown--Halmos theorem for this class of operators.
\end{abstract}

\maketitle

\section{Introduction}

This paper relates multipliers between subspaces of $L^2$ and equivalence of generalized Toeplitz operators (defined below in \eqref{9aiuerhgjfkdvc}). To give some context to our results, we begin with some comments on what we mean by, and the significance of, the term equivalence of operators. For two complex Hilbert spaces $\H$ and $\K$, we let $\mathscr{B}(\H, \K)$ denote the bounded linear transformations from $\H$ to $\K$. When $\H = \K$, we let $\mathscr{B}(\H) = \mathscr{B}(\H, \H)$. 
There are various ways to identify $A \in \mathscr{B}(\H_1, \H_2)$ with $B \in \mathscr{B}(\K_1, \K_2)$. 
For example, when $\H_{1} = \H_{2} = \H$, $\K_{1}= \K_{2}= \K$, and there is an isometric isomorphism $U: \K \to \H$ satisfying $A = U B U^{*}$, then $A$ and $B$ are {\em unitarily equivalent}. Such operators $A$ and $B$ share the same norm, spectrum, invariant subspace structure, as well as any assumed properties of normality, subnormality, or hyponormality. When the isometric assumption on $U$ is weakened and $U$ is just a bounded invertible operator, then $A$ and $B$ are {\em similar} ($A = U B U^{-1}$) and thus share the same spectral properties and invariant subspace structure. 

The focus of this paper  is the following, even weaker, but very useful identification of $A$ and $B$. Indeed, $A \in \mathscr{B}(\H_1, \H_2)$ and $B \in \mathscr{B}(\K_1, \K_2)$ are {\em equivalent} if there are bounded {\em invertible} operators $E: \K_{2} \to \H_{2}$ and $F: \H_{1} \to \K_{1}$ such that 
\begin{equation}\label{endnfddnfndffff}
A = E B F.
\end{equation}
The above notion of equivalence  has been  previously used to study spectral properties of operators and the solvability of singular integral equations \cite{MR1246812, MR3806717, MR405144, MR0482317, MR881386, MR790315}. For instance, the main motivation behind  Wiener--Hopf factorization, and its varied applications, relies on the fact that it provides a useful equivalence between a Toeplitz operator (on the Hardy space) with a nonvanishing continuous symbol and a Toeplitz operator with a more easily understood monomial symbol.

One can show that equivalent operators are simultaneously Fredholm (with the same Fredholm defect numbers)  and the range, kernel, and inverse of one operator is uniquely determined by the other. For example, a straightforward argument using the singular value decomposition   will show that two $n \times n$ matrices, thought of as linear transformations on $\C^{n}$, are equivalent if and only if they have the same rank (see Proposition \ref{Prandnksd}). 

This paper studies the above notion of equivalence for the class of generalized Toeplitz operators defined as follows. Let $L^{2}$ denote the standard Lebesgue space of the circle and $L^{\infty}$ the essentially bounded functions in $L^2$. For $\phi \in L^{\infty}$, let $M_{\phi}: L^2 \to L^2$ denote the multiplication operator $M_{\phi} f = \phi f$. For two closed subspaces $\H_1$ and $\H_{2}$ of $L^2$, the {\em generalized  Toeplitz operator} from $\H_1$ to $\H_2$ with {\em symbol} $\phi \in L^{\infty}$, is defined by
\begin{equation}\label{9aiuerhgjfkdvc}
T_\phi^{\H_1,\H_2} : \H_1 \to \H_2, \quad T_\phi^{\H_1,\H_2} = P_{\H_2} M_\phi|_{\H_1}, 
\end{equation}
where $P_{\H}$ denotes the orthogonal projection from $L^2$ onto the closed subspace $\H$.
We will use the notation
$\mathscr{T}(\H_{1}, \H_2) := \{T_\phi^{\H_1,\H_2}: \phi \in L^{\infty}\}$ and 
$
\mathscr{T}(\H) := \mathscr{T}(\H, \H).$

These generalized Toeplitz operators are particular cases of {\em general Wiener-Hopf operators} \cite{MR0251573, MR790315}. When $\H_1 = \H_2 = H^{2}$, the standard Hardy space of the disk \cite{Duren, MR0133008},  the set $\mathscr{T}(H^{2})$ is the set of  {\em Toeplitz operators} \cite{MR1071374}. When $\H_1 = \H_2 = (H^2)^{\perp} =  L^2 \ominus H^2$, $\mathscr{T}((H^{2})^{\perp})$ is the set of  {\em dual Toeplitz operators} \cite{MR1885661}. 
When $\H_1 = H^{2}$ and $\H_2 = (H^{2})^{\perp}$, the set $\mathscr{T}(H^{2},  (H^{2})^{\perp})$ becomes the set of  {\em Hankel operators} \cite{MR985586, MR1630646}. For inner functions $\Theta_1, \Theta_2$ and their corresponding model spaces $\K_{\Theta_j} = H^{2} \ominus \Theta_j H^{2}$, $j = 1, 2$, the operators in $\mathscr{T}(\K_{\Theta_1}, \K_{\Theta_2})$ are the {\em asymmetric truncated Toeplitz operators} \cite{MR3708535, MR3634513, MR2363975}.  When $\H_j = (\K_{\Theta_j})^{\perp} = L^2 \ominus \K_{\Theta_j} = \Theta_j H^2 \oplus (H^2)^{\perp}$, $j = 1, 2$, then $\mathscr{T}((\K_{\Theta_1})^{\perp}, (\K_{\Theta_2})^{\perp})$ is the set of  {\em asymmetric dual truncated Toeplitz operators}  \cite{CamaraRoss, MCKB, MR3759573, BHal, Mbbhbbh}. We will discuss the above classes of operators throughout this paper as motivating examples of generalized Toeplitz operators.

Our path towards an equivalence of generalized Toeplitz operators as in \eqref{endnfddnfndffff} will be through the bounded invertible {\em multipliers} between the ambient subspaces $\H$ and $\K$ of $L^2$. To this end, let 
$\mathscr{G}\!L^{\infty}$ 
denote the set of invertible elements of the algebra $L^{\infty}$ (i.e., $a, \frac{1}{a} \in L^{\infty}$). If $a \in \mathscr{G}\!L^{\infty}$ satisfies 
$
\H = a \K,
$
i.e., is a bounded multiplier from $\K$ onto $\H$, then the generalized Toeplitz operator $T_a^{\K, \H}$ as in \eqref{9aiuerhgjfkdvc} becomes the (surjective) multiplication operator
$T_a^{\K, \H} = M_a|_{\mathcal K}.$
Then, if $a_1, a_2\in \mathscr{G}\!L^\infty$ satisfy
$\H_1 = a_1 \K_1$ and $\H_2 = a_2 \K_2,$  Proposition \ref{2.10AA} shows that the operators $E: \mathcal{K}_2 \to \mathcal{H}_2$ and $F: \mathcal{H}_1 \to \mathcal{K}_1$ defined by 
$$E = T_{\overline a_2^{-1}}^{\K_2, \H_2} \quad \mbox{and} \quad F = T_{a_1^{-1}}^{\H_1, \K_1}$$
are invertible. 
Finally, if the functions $\phi, \psi \in L^{\infty}$ are related by the identity 
\begin{equation}
\label{eqn:factorization}
\psi = \bar a_2 \phi\, a_1,
\end{equation}
we will show in Theorem \ref{2.13A} that 
\begin{equation}
\label{eqn:equiv_operators}
T_\phi^{\H_1, \H_2} = E T_\psi^{\K_1, \K_2} F,
\end{equation}
and thus the generalized Toeplitz operators $T_\phi^{\mathcal H_1, \mathcal H_2}$ and $T_\psi^{\mathcal K_1, \mathcal K_2}$ are equivalent in the sense of \eqref{endnfddnfndffff}. Moreover, they are equivalent by means of the operators $E$ and $F$ which, in themselves, are generalized Toeplitz operators.  Although the result above has a converse in the case of the classical Toeplitz operators (and connects to the well-known Brown--Halmos theorem  \cite[Thm.~8]{BH}), this does not extend to generalized Toeplitz operators  since these operators can be the zero operator without the symbol being zero. We attempt to overcome this difficulty and formulate a sort of converse to this result in Proposition \ref{3,8A}.

When $\H_1 = \H_2 = \H$, $\K_1 = \K_2  = \K$, and $a_1 = a_2 = a$, the identity from \eqref{eqn:equiv_operators} becomes 
\begin{equation}\label{kjvusdyf6sdf7sdf76sdf}
T_{\phi}^{\H} = F^{*}  T_{\psi}^{\K} F
\end{equation}
 and it is natural to ask whether $F^{*} = F^{-1}$, making $F = T^{\H, \K}_{a^{-1}}$ a unitary operator, and hence making the generalized Toeplitz operators $T_{\phi}^{\H}$ and $T_{\psi}^{\K}$ unitarily equivalent. We formulate conditions for when this occurs in Proposition \ref{2.16A} and Corollary \ref{2.17A}.

The equality \eqref{eqn:factorization} defines an equivalence relation between the functions $\phi$ and $\psi$. In \S \ref{sectionfive} we will see plenty of examples of equivalent functions amongst the inner functions and unimodular functions. For example, any Frostman shift of an inner function $\Theta$ is equivalent to $\Theta$, and any two finite Blaschke products of the same degree are equivalent. It is also the case that any two Lipschitz continuous functions with the same winding number about the origin are equivalent. 

These equivalences reduce the study of numerous properties (Fredholmness, invertibility, kernel, range) of a given generalized Toeplitz operator to those of another operator of the same type, but with a simpler symbol. This happens, for instance, when studying Toeplitz operators by using a Wiener-Hopf factorization of their symbols.
Alternatively, one can use the equivalence \eqref{eqn:equiv_operators} to reduce the study of an operator between two spaces $\H_1$, $\H_2$ to that of an operator acting on ``simpler'' spaces $\K_1, \K_2$. We will cover this in \S \ref{TTO6} and \S \ref{DTTO07}. 

The skeptical reader might ask why we are exploring a notion of equivalence of generalized Toeplitz operators when there are the readily available and stronger notions of similarity and unitary equivalence.
The issue is that one might not be able to relate generalized Toeplitz operators using similarity or unitary equivalence (see Proposition \ref{2.16A} and Corollary \ref{2.17A}), but it is often possible using this weaker notion of equivalence. Sometimes this is all that is needed to relate the Fredholm properties of the two operators. There is also the obvious reason for the need for an equivalence beyond unitary equivalence and similarity due to the simple fact that the operators $A$ and $B$ in \eqref{endnfddnfndffff} each act between two {\em different} spaces which, in a sense, almost forces the need for the operators $E$ and $F$ to appear the way they do.

There is often an antilinear unitary mapping $C_{\K}$ on $\K$ for which the generalized Toeplitz operator $T_{\psi}^{\K}$ is {\em complex selfadjoint} in that 
$$C_{\K} T_{\psi}^{\K} C_{\K}^{-1} = (T_{\psi}^{\K})^{*}.$$ When the generalized Toeplitz operators $T_{\phi}^{\H}$ and $T_{\psi}^{\K}$ are related by \eqref{kjvusdyf6sdf7sdf76sdf}, we will show in \S \ref{ALunitaryhhh} that we can define an ``equivalent'' antilinear unitary operator $C_{\H}$ such that complex selfadjointness of $T_{\psi}^{\K}$ with respect to $C_{\K}$ implies complex selfadjointness with respect to $C_{\H}$ and vice-versa.  We will explore this phenomenon for certain classes of  truncated Toeplitz operators in \S \ref{ALunitaryhhh}.

\section{Multipliers, projections, and equivalent operators}\label{section2}

Let $L^2 := L^2(m)$, where $m = d \theta/2\pi$ represents the standard normalized Lebesgue measure on $\T$, and $L^{\infty} = L^{\infty}(m)$ denotes the set of essentially bounded functions on the unit circle  $\T$. For two closed subspaces $\H$ and $\K$ of $L^2$,  the function $a \in L^{\infty}$ is a {\em multiplier} from $\K$ to $\H$ if $a \K \subset \H$. In this section we will focus our attention on the case when $a \in \mathscr{G\!}L^{\infty}$, where 
$$\mathscr{G}\!L^{\infty} := \Big\{g \in L^{\infty}: \frac{1}{g} \in L^{\infty}\Big\}$$ 
(the invertible elements of the algebra $L^{\infty}$) and $a \K = \H$. In this case, the multiplication operator $M_{a}: \K \to \H$, defined by $M_{a} f = a f$,  is bounded and  invertible.

An important class of multipliers arises when $\H$ and $\K$ are both equal to $ H^2$, the classical Hardy space 
$$H^2 = \{f \in L^2: \widehat{f}(n) = 0 \mbox{ for all } n < 0\}.$$ In the above, $(\widehat{f}(n))_{n = -\infty}^{\infty}$ is the sequence of Fourier coefficients associated with an $f \in L^2$. By means of radial boundary values on $\T$, we can also regard $H^2$ as the space of analytic functions on the open unit disk $\D$ whose Taylor coefficients belong to the sequence space $\ell^2$.  Standard facts  about $H^2$ (inner-outer factorization \cite[Ch.~2]{Duren}) say that  $\phi H^2 \subset H^2$ if and only if $\phi \in H^{\infty} := H^2 \cap L^{\infty}$ and $\phi H^{2} = H^2$ if and only if $\phi \in \mathscr{G}\!H^{\infty}$, where 
$$\mathscr{G}\!H^{\infty} := \Big\{g \in H^{\infty}: \frac{1}{g} \in H^{\infty}\Big\}$$
(the invertible elements of the algebra $H^{\infty}$).

Another important example of multipliers appears when $\H$ and $\K$ are model spaces. A {\em model space} corresponding to an {\em inner function} $I$ ($I \in H^{\infty}$ and $|I| = 1$ almost everywhere on $\T$) is 
$$\K_{I} := H^2 \cap (I H^2)^{\perp}.$$ See \cite{MR3526203} for the basics of model spaces. 
 Multipliers between model spaces were initially explored by Crofoot \cite{MR1313454} and then later in \cite{MR3720929}. If $a \in \mathscr{G}\!L^{\infty}$ and $a \K_{\theta} = \K_{\alpha}$ ($\alpha$ and $\theta$ are inner), results from \cite{MR1313454} show that $a \in \mathscr{G}\!H^{\infty}$. In fact, there is a complete description of these types of multipliers \cite[Cor.~18]{MR1313454}.

\begin{Proposition}[Crofoot]\label{2.1A}
For $a \in \mathscr{G}\!L^{\infty}$ and  inner functions $\theta$ and $\alpha$, the following are equivalent: 
\begin{enumerate}
\item  $a \K_{\theta} = \K_{\alpha}$; 
\item $a \in \mathscr{G}\!H^{\infty}$ and 
$\alpha \bar{a}/(\theta a)$ is a unimodular constant function on $\T$.
\end{enumerate}
\end{Proposition}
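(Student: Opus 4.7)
My approach is to prove the two implications of the biconditional separately, working with the characterization $\K_\theta = H^2 \cap \theta (H^2)^\perp$ (and similarly for $\K_\alpha$) throughout.

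For (b) $\Rightarrow$ (a): given $a \in \mathscr{G}H^\infty$ and $\alpha\bar a = c\theta a$ with $|c|=1$, I would take $f \in \K_\theta$ and verify directly that $af \in \K_\alpha$. Boundedness of $a$ gives $af \in H^2$, and conjugating the identity to $\bar\alpha a = \bar c\bar\theta\bar a$ yields $\bar\alpha(af) = \bar c\,\bar a\,(\bar\theta f) \in (H^2)^\perp$, since $\bar\theta f \in (H^2)^\perp$ and multiplication by $\bar a \in \overline{H^\infty}$ preserves $(H^2)^\perp = \overline{zH^2}$. Thus $a\K_\theta \subset \K_\alpha$; the reverse inclusion follows from the same argument applied to $1/a \in \mathscr{G}H^\infty$, which satisfies the mirror identity $\theta a/(\alpha\bar a) = \bar c$.

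For (a) $\Rightarrow$ (b), I proceed in two steps. First, I show $a \in \mathscr{G}H^\infty$: the reproducing kernel $k_0^\theta = 1 - \overline{\theta(0)}\theta \in \K_\theta$ is outer in $H^\infty$ because $|k_0^\theta| \geq 1 - |\theta(0)| > 0$ on $\D$ in the nontrivial case where $\theta$ is nonconstant. From $ak_0^\theta \in \K_\alpha \subset H^2$ the outer-denominator argument places $a$ in the Smirnov class $N^+$; together with $a \in L^\infty$ this gives $a \in H^\infty$. Symmetrically, using $k_0^\alpha$, we get $1/a \in H^\infty$.

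The second step, extracting that $u := \alpha\bar a/(\theta a)$ is a unimodular constant, is where I expect the main difficulty. My plan is to introduce the antilinear involutive isometries $C_\theta f = \bar z\theta\bar f$ on $\K_\theta$ and $C_\alpha g = \bar z\alpha\bar g$ on $\K_\alpha$. A direct calculation on $\T$ (using $z\bar z = 1$) shows $C_\alpha M_a C_\theta = M_b|_{\K_\theta}$ where $b := \alpha\bar\theta\bar a$, so $b$ is a second bijective multiplier from $\K_\theta$ onto $\K_\alpha$. Consequently $M_{1/a}\circ M_b = M_u|_{\K_\theta}$ is a bijective self-multiplier of $\K_\theta$, with $|u|=1$ automatic from the unimodularity of $\alpha$ and $\theta$. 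Applying the Step~1 argument with $\alpha$ replaced by $\theta$ then yields $u \in \mathscr{G}H^\infty$, and a function that is both inner and an element of $\mathscr{G}H^\infty$ must be a unimodular constant (by the maximum principle applied simultaneously to $u$ and $1/u$). The crux of the argument is thus the conjugation identity, which produces the auxiliary multiplier $b$ and reduces the problem to a self-multiplier question on $\K_\theta$ that admits an elementary resolution.
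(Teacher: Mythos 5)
Your proof is correct, and it is worth noting that the paper itself offers no proof of this statement: it is quoted as Crofoot's theorem \cite[Cor.~18]{MR1313454}, and the nearby Proposition \ref{2.2A} (the equivalent reformulation) is derived by identifying $\K_\theta$ with the Toeplitz kernel $\ker T_{\bar\theta}$ and invoking the characterization of when $\ker T_{\bar\alpha}=\ker T_{a^{-1}\bar\theta}$ from \cite[Prop.~2.16]{MR3806717}. Your route is genuinely different and self-contained. The easy direction and Step 1 (reproducing kernel $k_0^\theta\in\mathscr{G}\!H^\infty$, outer-denominator argument, symmetry in $a\leftrightarrow 1/a$) are standard and sound. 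The novel ingredient is Step 2: conjugating $M_a$ by the canonical conjugations $C_\theta f=\bar z\theta\bar f$ and $C_\alpha g=\bar z\alpha\bar g$ to manufacture the second multiplier $b=\alpha\bar\theta\bar a$, so that $u=b/a=\alpha\bar a/(\theta a)$ is a unimodular, invertible self-multiplier of $\K_\theta$; Step 1 then puts $u$ and $1/u$ in $H^\infty$, and an inner function with inner reciprocal is a unimodular constant by the maximum principle. This cleanly sidesteps both Crofoot's original computation and the Toeplitz-kernel machinery, at the cost of needing the (standard, and used later in the paper in \S\ref{ALunitaryhhh}) fact that $C_\theta$ is an involutive antiunitary of $\K_\theta$ onto itself. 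What the paper's citation-based route buys is brevity and the bridge to the kernel formalism of \S\ref{sectionthree}; what yours buys is a proof from first principles whose only nonelementary input is the model-space conjugation. One small caveat, which you already flag: the statement silently assumes $\theta$ (hence $\alpha$) nonconstant, since for $\K_\theta=\{0\}$ condition (a) holds for every $a\in\mathscr{G}\!L^\infty$ while (b) need not.
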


There can be unbounded multipliers from one model space onto another \cite[Thm.~6.9]{MR3720929} but in this paper we will only consider the {\em bounded} ones. The multipliers from one model space to {\em itself}, are just the non-zero constant functions \cite[Prop.~12]{MR1313454}. Finally,  there may not be {\em any} multiplier between two given model spaces (e.g., when $\theta$ and $\alpha$ are finite Blaschke products of different degrees as this will correspond to finite dimensional model spaces of different dimensions \cite[p.~116]{MR3526203}). However, when $a \K_{\theta} = \K_{\alpha}$, the multiplier $a$ is unique up to a nonzero constant factor \cite[Cor.~13]{MR1313454}.
 
Model spaces $\K_{\theta}$ are kernels  of Toeplitz operators, i.e., $\K_{\theta} = \ker T_{\bar{\theta}}$ \cite[p.~108]{MR3526203}. For a $\phi \in L^{\infty}$, recall the {\em Toeplitz operator} $T_{\phi}$ with symbol $\phi$ is the bounded operator on $H^2$ defined by 
\begin{equation}\label{toooeepizttzzzz}
T_{\phi} f = P_{+}(\phi f),
\end{equation}
 where $P_{+}$ denotes the {\em Riesz projection} \cite{MR1071374, MR0361893}. Recall that $P_{+}$ is the orthogonal projection of $L^2$ onto $H^2$ and is given via Fourier series as  
$$P_{+}\Big(\sum_{n = -\infty}^{\infty} \widehat{f}(n) \xi^n\Big) = \sum_{n = 0}^{\infty} \widehat{f}(n) \xi^n, \quad f \in L^2.$$
 This allows us to formulate an equivalent version of Proposition \ref{2.1A} which will be useful in a moment. 
 
 \begin{Proposition}\label{2.2A}
 For $a \in \mathscr{G}\!L^{\infty}$ and inner functions $\theta$ and $\alpha$, the following are equivalent: 
 \begin{enumerate}
 \item $a \K_{\theta} = \K_{\alpha}$; 
 \item $a \in \mathscr{G}\!H^{\infty}$ and $a = \bar{\theta} \alpha \bar{h}$ for some $h \in \mathscr{G}\!H^{\infty}$.
 \end{enumerate}
 \end{Proposition}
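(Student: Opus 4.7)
The plan is to derive Proposition 2.2A directly from Proposition 2.1A. Both statements include the hypothesis $a \in \mathscr{G}\!H^\infty$, so the task reduces to showing that, given $a \in \mathscr{G}\!H^\infty$, the Crofoot condition ``$\alpha \bar a/(\theta a)$ is a unimodular constant $c$'' is equivalent to the existence of an $h \in \mathscr{G}\!H^\infty$ with $a = \bar\theta \alpha \bar h$.

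For the forward direction, manipulating the desired identity formally on $\T$ (where $\theta\bar\theta = \alpha\bar\alpha = 1$) suggests $h := c\, a$ as the natural candidate. This lies in $\mathscr{G}\!H^\infty$ since $|c|=1$ and $a \in \mathscr{G}\!H^\infty$. From the Crofoot identity $\alpha \bar a = c\,\theta a$ one verifies in one line that $\bar\theta \alpha \bar h = \bar c\,(\bar\theta\alpha\bar a) = a$.

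For the converse, assume $a = \bar\theta\alpha\bar h$ with $a, h \in \mathscr{G}\!H^\infty$. Multiplying by $\theta$ and using $\theta\bar\theta = 1$ on $\T$ gives $\theta a = \alpha \bar h$, and substituting into Crofoot's ratio yields
\[
\frac{\alpha\bar a}{\theta a} \;=\; \frac{\alpha\bar a}{\alpha\bar h} \;=\; \overline{a/h}.
\]
Now $a/h \in H^\infty$ (since $a \in H^\infty$ and $1/h \in H^\infty$), its reciprocal $h/a$ is likewise in $H^\infty$, and the displayed identity shows $|a/h| = 1$ almost everywhere on $\T$. Hence $a/h$ is an inner function whose reciprocal is also inner, so $a/h$ is a unimodular constant; this is precisely Crofoot's condition for $a$.

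The only delicate step is the final one: an invertible element of $H^\infty$ with unimodular boundary values must be a unimodular scalar. This follows from two applications of the maximum modulus principle ($|a/h| \le 1$ and $|h/a|\le 1$ in $\D$ force $a/h$ to be constant). Everything else is routine algebra on $\T$ exploiting $\theta\bar\theta = \alpha\bar\alpha = 1$, so I would expect the argument to be short once Proposition 2.1A is in hand.
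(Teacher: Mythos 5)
Your proof is correct, but it follows a genuinely different route from the paper's. The paper deduces the equivalence from a criterion for equality of Toeplitz kernels: writing $\K_\theta=\ker T_{\bar\theta}$ and $\K_\alpha=\ker T_{\bar\alpha}$, it invokes an external result (\cite[Prop.~2.16]{MR3806717}) giving $\ker T_{\bar\alpha}=\ker T_{a^{-1}\bar\theta}$ if and only if $\bar\alpha=a^{-1}\bar\theta\bar h$ for some $h\in\mathscr{G}\!H^{\infty}$, from which the factorization $a=\bar\theta\alpha\bar h$ drops out. You instead show directly that, once $a\in\mathscr{G}\!H^{\infty}$ is known, condition (b) of Proposition \ref{2.2A} is equivalent to Crofoot's condition in Proposition \ref{2.1A}(b), using only the identities $\theta\bar\theta=\alpha\bar\alpha=1$ on $\T$, the explicit choice $h=ca$, and the standard fact that an element of $\mathscr{G}\!H^{\infty}$ with unimodular boundary values is a unimodular constant (two applications of the maximum modulus principle). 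Your argument is verified step by step: the forward computation $\bar\theta\alpha\bar h=\bar c\,\bar\theta(c\theta a)=a$ is right, and in the converse the unimodularity of $a/h$ follows from $|\alpha|=|\theta|=1$ a.e.\ (or directly from $a=\bar\theta\alpha\bar h$), after which the maximum-modulus step closes the argument. The trade-off: the paper's route is shorter on the page but leans on a cited kernel-equality theorem, whereas yours is self-contained relative to what the paper has already stated (Proposition \ref{2.1A}) and makes transparent that the two formulations of Crofoot's theorem differ only by elementary algebra on $\T$; on the other hand, the paper's kernel-based phrasing foreshadows the techniques used later in \S\ref{sectionthree} and \S\ref{sectionfive}. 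One cosmetic point: when you say the displayed identity shows $|a/h|=1$ a.e., you should add that this uses $|\alpha|=|\theta|=1$ on $\T$ when taking moduli of the left-hand side.
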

 
 \begin{proof}
 If $a \in \mathscr{G}\!L^{\infty}$ and $a \K_{\theta} = \K_{\alpha}$, then $a \in \mathscr{G}\!H^{\infty}$. Using \cite[Prop.~2.16]{MR3806717},
 \begin{align*}
 \K_{\alpha} = a \K_{\theta} & \iff \ker T_{\bar{\alpha}} = a \ker T_{\bar{\theta}}\\
 & \iff \ker T_{\bar{\alpha}} = \ker T_{a^{-1} \bar{\theta}}\\
 & \iff \bar{\alpha} = a^{-1} \bar{\theta} \bar{h} \quad \mbox{for some $h \in \mathscr{G}\!H^{\infty}$}.
 \end{align*}
 Thus, $a = \bar{\theta} \alpha \bar{h}$ with $h \in \mathscr{G}\!H^{\infty}$. Conversely, if $a \in \mathscr{G}\!H^{\infty}$ and $a = \bar{\theta} \alpha \bar{h}$ with $h \in \mathscr{G}\!H^{\infty}$, then 
$\ker T_{\bar{\alpha}} = \ker T_{\bar{\theta} a^{-1} \bar{h}} = a \ker T_{\bar{\theta}}.$
 Hence, $a \K_{\theta} = \K_{\alpha}$.
 \end{proof}
 
Returning to our general discussion  of multipliers from $\H$ onto $\K$, we have the following useful results for the multipliers between the annihilators $\H^{\perp}$ and $\K^{\perp}$.
 
\begin{Proposition}\label{2.5A}
\begin{enumerate}
\item For $a \in L^{\infty}$, the following are equivalent: 
\begin{enumerate}
\item $a \K \subset \H$;
\item $\bar{a} \H^{\perp} \subset \K^{\perp}$.
\end{enumerate}
\item For $a \in \mathscr{G}\!L^{\infty}$, the following are equivalent: 
\begin{enumerate}
\item $a \K = \H$;
\item $\bar{a} \H^{\perp} = \K^{\perp}$.
\end{enumerate}
\end{enumerate}
\end{Proposition}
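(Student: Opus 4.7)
The plan is to derive both parts from the elementary adjoint identity $\langle af, g\rangle = \langle f, \bar a g\rangle$ valid for $a \in L^\infty$ and $f, g \in L^2$, together with the standard facts $(\H^\perp)^\perp = \H$ and $\bar{\bar a} = a$.

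For part (a), I would first prove the forward implication. Suppose $a\K \subset \H$. Given $g \in \H^\perp$ and $f \in \K$, the inclusion gives $af \in \H$, so
\[
\langle \bar a g, f\rangle = \langle g, af\rangle = 0,
\]
which shows $\bar a g \perp \K$, hence $\bar a \H^\perp \subset \K^\perp$. For the converse, I would observe that the hypothesis $\bar a \H^\perp \subset \K^\perp$ is exactly the statement we just proved, but applied to the symbol $\bar a$ and the pair of subspaces $\H^\perp$, $\K^\perp$. Thus the same argument yields $a (\K^\perp)^\perp \subset (\H^\perp)^\perp$, which is $a\K \subset \H$.

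For part (b), the main new ingredient is that $a \in \mathscr{G}L^\infty$ ensures $M_a$ is a bounded invertible operator on $L^2$ with inverse $M_{1/a}$, so $a\K = \H$ is equivalent to the pair of inclusions $a\K \subset \H$ and $\tfrac{1}{a}\H \subset \K$. Applying part (a) to the first inclusion gives $\bar a\,\H^\perp \subset \K^\perp$; applying part (a) to the second (with the symbol $1/a$) gives $\tfrac{1}{\bar a}\,\K^\perp \subset \H^\perp$, i.e.\ $\K^\perp \subset \bar a \,\H^\perp$. Combining yields $\bar a\,\H^\perp = \K^\perp$. The converse is handled symmetrically: write $\bar a\,\H^\perp = \K^\perp$ as the pair $\bar a\,\H^\perp \subset \K^\perp$ and $\tfrac{1}{\bar a}\,\K^\perp \subset \H^\perp$, then apply part (a) to each inclusion and pass to annihilators.

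There is no real obstacle here; the argument is purely the duality between multiplication and its $L^2$-adjoint, and everything reduces to checking that the natural symmetries (replacing $a$ by $\bar a$ or by $1/a$, and the subspaces by their orthogonal complements) match up correctly. The only minor care needed is to confirm that invertibility of $a$ in $L^\infty$ is genuinely used only in part (b), where we must split equalities into two inclusions and use that $1/a$ also lies in $L^\infty$ so that part (a) can be applied to it.
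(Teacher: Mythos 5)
Your proposal is correct and uses essentially the same argument as the paper: the adjoint identity $\langle \bar a \widetilde h, k\rangle = \langle \widetilde h, a k\rangle$ together with $(\H^\perp)^\perp = \H$ for closed subspaces. The paper only writes out the forward implication of part (a) and remarks that the argument reverses, leaving part (b) implicit, so your explicit reduction of the equality $a\K=\H$ to the two inclusions $a\K\subset\H$ and $a^{-1}\H\subset\K$ is a welcome (and correct) filling-in of that gap rather than a different method.
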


\begin{proof}
Suppose $a \K \subset \H$. If $k \in \K$ and $\widetilde{h} \in \H^{\perp}$, then 
$\langle \bar{a} \widetilde{h}, k\rangle = \langle \widetilde{h}, a k\rangle = 0$ and so $\bar{a} \H^{\perp} \subset \K^{\perp}$. The argument above can be reversed. 
\end{proof}

Propositions \ref{2.2A} and \ref{2.5A} yield the following. 

\begin{Proposition}\label{2.6A}
For $b \in \mathscr{G}\!L^{\infty}$ and inner functions $\alpha, \theta$, the following are equivalent: 
\begin{enumerate}
\item $b \K_{\alpha}^{\perp} = \K_{\theta}^{\perp}$; 
\item $b \in \mathscr{G}\!\overline{H^{\infty}}$ and $\alpha b/(\theta \bar{b})$ is a unimodular constant function on $\T$; \item $b \in \mathscr{G}\!\overline{H^{\infty}}$ and $b = \theta \bar{\alpha} h$ for some $h \in \mathscr{G}\!H^{\infty}$.
\end{enumerate}
\end{Proposition}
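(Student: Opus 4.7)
The plan is to obtain Proposition \ref{2.6A} as a direct consequence of the already established Propositions \ref{2.1A}, \ref{2.2A}, and \ref{2.5A}. The key observation is that conditions (b) and (c) are built out of $b$ and its conjugate $\bar b$ in a manner that mirrors the two characterizations of multipliers between model spaces, only with the roles of $\theta$ and $\alpha$ swapped and $b$ replaced by $\bar b$. So the work is essentially bookkeeping, once the correct ``dualization'' step is performed.

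First, I would apply Proposition \ref{2.5A}(b) with $\H = \K_{\theta}^{\perp}$ and $\K = \K_{\alpha}^{\perp}$, so that $\H^{\perp} = \K_{\theta}$ and $\K^{\perp} = \K_{\alpha}$. Since $b \in \mathscr{G}\!L^{\infty}$, this turns condition (a) into the equivalent statement
\[
\bar b\,\K_{\theta} = \K_{\alpha}.
\]
In other words, after passing to orthogonal complements, a multiplier from $\K_{\alpha}^{\perp}$ onto $\K_{\theta}^{\perp}$ is nothing other than (the conjugate of) a multiplier from $\K_{\theta}$ onto $\K_{\alpha}$.

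Next I would feed $a := \bar b$ into Propositions \ref{2.1A} and \ref{2.2A}, with the roles of $\theta$ and $\alpha$ interchanged relative to their original statements. From Proposition \ref{2.1A}, the equality $\bar b\,\K_{\theta} = \K_{\alpha}$ holds if and only if $\bar b \in \mathscr{G}\!H^{\infty}$ and $\alpha b/(\theta \bar b)$ is a unimodular constant on $\T$; noting that $\bar b \in \mathscr{G}\!H^{\infty}$ is equivalent to $b \in \mathscr{G}\!\overline{H^{\infty}}$, this is exactly condition (b). Similarly, Proposition \ref{2.2A} gives that $\bar b\,\K_{\theta} = \K_{\alpha}$ holds if and only if $\bar b \in \mathscr{G}\!H^{\infty}$ and $\bar b = \bar{\theta}\alpha\bar{h}$ for some $h \in \mathscr{G}\!H^{\infty}$; taking complex conjugates yields $b = \theta\bar{\alpha}h$, which is condition (c).

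There is no substantial obstacle: all three equivalences (a)$\Leftrightarrow$(b), (a)$\Leftrightarrow$(c), and (b)$\Leftrightarrow$(c) drop out once the reduction $b\,\K_{\alpha}^{\perp} = \K_{\theta}^{\perp} \iff \bar b\,\K_{\theta} = \K_{\alpha}$ is in place. The only minor point to be careful about is the symmetry between $\theta$ and $\alpha$ in applying Propositions \ref{2.1A} and \ref{2.2A}, since here $\bar b$ plays the role of the multiplier going from $\K_{\theta}$ to $\K_{\alpha}$, so the positions of $\theta$ and $\alpha$ are reversed relative to Proposition \ref{2.2A}; this is why the ``$\theta\bar\alpha$'' in condition (c) appears with $\theta$ in the numerator and $\bar\alpha$ as the conjugate factor.
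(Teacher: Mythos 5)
Your proof is correct and is precisely the route the paper takes: it states that Propositions \ref{2.2A} and \ref{2.5A} ``yield'' Proposition \ref{2.6A}, i.e.\ one passes from $b\,\K_{\alpha}^{\perp}=\K_{\theta}^{\perp}$ to $\bar b\,\K_{\theta}=\K_{\alpha}$ via Proposition \ref{2.5A}(b) and then substitutes $a=\bar b$ into Propositions \ref{2.1A} and \ref{2.2A}. One tiny quibble: no interchange of $\theta$ and $\alpha$ is actually needed when invoking Propositions \ref{2.1A} and \ref{2.2A} (the target is $\bar b\,\K_{\theta}=\K_{\alpha}$, matching their stated form), and the appearance of $\theta\bar\alpha$ in (c) comes purely from conjugating $\bar b=\bar\theta\alpha\bar h$ --- but the formulas you wrote are the correct ones.
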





In what follows, $\H_1, \H_2, \K_1, \K_2$ will be closed subspaces of $L^2$ and, for a closed subspace $\H$ of $L^2$, $P_{\H}$ will denote the orthogonal projection of $L^2$ onto $\H$ (and of course $P_{\H^{\perp}} = I_{\operatorname{d}} - P_{\H}$).

\begin{Theorem}\label{2.8A}
Let $\psi, \phi \in L^{\infty}$. If either $\psi \H^{\perp} \subset \K_{2}^{\perp}$ or $\phi \K_1 \subset \H$, then 
$$T_{\psi \phi}^{\K_1, \K_2} = T_{\psi}^{\H, \K_2} T_{\phi}^{\K_1, \H}.$$
\end{Theorem}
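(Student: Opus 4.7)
The plan is to unfold the definitions, identify the ``error term'' between the two sides, and then show it vanishes under either hypothesis. For $f \in \K_1$, the right-hand side is
\[
T_{\psi}^{\H, \K_2} T_{\phi}^{\K_1, \H} f = P_{\K_2}\bigl(\psi\, P_{\H}(\phi f)\bigr),
\]
while the left-hand side is $T_{\psi\phi}^{\K_1, \K_2} f = P_{\K_2}(\psi \phi f)$. Subtracting and using $P_{\H} + P_{\H^{\perp}} = I_{\operatorname{d}}$ leaves
\[
T_{\psi\phi}^{\K_1, \K_2} f - T_{\psi}^{\H, \K_2} T_{\phi}^{\K_1, \H} f = P_{\K_2}\bigl(\psi\, P_{\H^{\perp}}(\phi f)\bigr).
\]
So the whole claim reduces to verifying that $\psi\, P_{\H^{\perp}}(\phi f) \in \K_2^{\perp}$ under either assumption.

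If $\phi \K_1 \subset \H$, then $\phi f \in \H$, so $P_{\H^{\perp}}(\phi f) = 0$ and the error term is trivially zero. If instead $\psi \H^{\perp} \subset \K_2^{\perp}$, I would note that $P_{\H^{\perp}}(\phi f) \in \H^{\perp}$, so multiplication by $\psi$ lands it in $\K_2^{\perp}$, whence applying $P_{\K_2}$ kills it. Either way the error vanishes and the identity follows.

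There is no real obstacle here; the argument is a one-line computation once the difference is rewritten using $P_{\H^{\perp}} = I_{\operatorname{d}} - P_{\H}$. The only thing worth emphasizing in the write-up is that the two hypotheses are symmetric in nature (they are the two ways provided by Proposition~\ref{2.5A}(a) to handle a multiplication-by-$\psi$ step crossing the projection $P_{\H}$), which is why one would expect such an ``either/or'' hypothesis to suffice.
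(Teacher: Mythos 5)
Your argument is correct and coincides with the paper's own proof: both insert $P_{\H} + P_{\H^{\perp}} = I_{\operatorname{d}}$ between the two multiplications and observe that the $P_{\H^{\perp}}$ term is annihilated either because $\phi f \in \H$ or because $\psi \H^{\perp} \subset \K_2^{\perp}$. No gaps.
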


\begin{proof}
If $\psi \H^{\perp} \subset \K_{2}^{\perp}$ we have 
\begin{align*}
T_{\psi \phi}^{\K_1, \K_2}  &= P_{\K_2} (\psi \phi) P_{\K_1}|_{\K_1}\\
& = P_{\K_2} \psi (P_{\H} + P_{\H^{\perp}}) \phi P_{\K_{1}}|_{\K_1}\\
&  = P_{\K_2} \psi P_{\H} \phi P_{\K_1}|_{\K_1}\\
&  = T_{\psi}^{\H, \K_2} T_{\phi}^{\K_1, \H}.
\end{align*}
If $\phi \K_1 \subset \H$, then 
$$
T_{\psi \phi}^{\K_1, \K_2}  = P_{\K_2} \psi \phi P_{\K_1}|_{\K_1}
 = P_{\K_2} \psi P_{\H} \phi P_{\K_1}|_{\K_1}
 = T_{\psi}^{\H, \K_2} T_{\phi}^{\K_1, \H}. \qedhere
$$
\end{proof}

\begin{Remark}\label{2.9A}
A  theorem of Brown and Halmos \cite[Thm.~8]{BH} says that for the classical Toeplitz operators on the Hardy space $H^2$ we have, for $\phi, \psi \in L^{\infty}$, 
\begin{equation}\label{2.1}
T_{\phi} T_{\psi} = T_{\phi \psi} \iff \mbox{$\phi \in \overline{H^{\infty}}$ or $\psi \in H^{\infty}$.}
\end{equation}
Since the condition $\phi \in \overline{H^{\infty}}$ is equivalent to $\phi (H^2)^{\perp} \subset (H^2)^{\perp}$ and the condition $\psi \in H^{\infty}$ is equivalent to $\psi H^2 \subset H^2$, the formula in \eqref{2.1} can be interpreted in terms of multipliers as saying that $T_{\phi} T_{\psi} = T_{\phi \psi}$ if and only if either $\phi$ is a multiplier from $(H^2)^{\perp}$ into $(H^2)^{\perp}$ or $\psi$ is a multiplier from $H^2$ into $H^2$. 

For generalized Toeplitz operators the ``only if'' part of the theorem cannot be always be generalized since the symbols $\phi$ and $\psi$ are not unique (see \eqref{Agammazerooooo} and \eqref{Agammazerooooo1} below). Thus, Theorem \ref{2.8A} appears to be the natural generalization of the Brown--Halmos theorem for generalized Toeplitz operators. We will discuss this further in the next section. 
\end{Remark}

As we approach the main theorem of this section, we note that it follows from Theorem \ref{2.8A} that each $a \in \mathscr{G}\!L^{\infty}$ for which $a \K = \H$ determines two closely related bounded invertible operators from $\K$ onto $\H$.

\begin{Proposition}\label{2.10AA}
Let $a \in \mathscr{G}\!L^{\infty}$ with $a \K = \H$. Then the operators 
\begin{equation}\label{2.2}
T_{a}^{\K, \H} = M_{a}|_{\K} \quad \mbox{and} \quad T_{\bar{a}^{-1}}^{\K, \H} = P_{\H} \bar{a}^{-1} P_{\K}|_{\K}
\end{equation}
are invertible with 
\begin{equation}\label{2.3}
(T_{a}^{\K, \H})^{-1} = T_{a^{-1}}^{\H, \K} \quad \mbox{and} \quad (T_{\bar{a}^{-1}}^{\K, \H})^{-1} = T_{\bar{a}}^{\H, \K}
\end{equation}
which are generalized Toeplitz operators. Moreover, 
\begin{equation}\label{2.4}
T_{\bar{a}^{-1}}^{\K, \H} = [(T_{a}^{\K, \H})^{*}]^{-1}.
\end{equation}
\end{Proposition}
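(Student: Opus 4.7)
The plan is to derive all three claims directly from Theorem \ref{2.8A} (the multiplicative property) combined with Proposition \ref{2.5A} (the annihilator multiplier correspondence). First, I would observe that since $a\K \subset \H$, the projection $P_{\H}$ acts as the identity on $M_a \K$, so $T_a^{\K,\H} = P_{\H} M_a|_{\K} = M_a|_{\K}$, which is the first identity in \eqref{2.2}. The second identity in \eqref{2.2} is just the definition of the generalized Toeplitz operator.

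Next, to establish \eqref{2.3}, I would compute the two compositions using Theorem \ref{2.8A}. For $T_{a^{-1}}^{\H,\K} T_{a}^{\K,\H}$ (viewed as a composition through the middle space $\H$), the hypothesis $a\K \subset \H$ is exactly the multiplier condition $\phi \K_1 \subset \H$ required by Theorem \ref{2.8A}, hence the product equals $T_{1}^{\K,\K} = I_{\K}$. Swapping roles, the composition $T_{a}^{\K,\H} T_{a^{-1}}^{\H,\K}$ uses the middle space $\K$ together with the multiplier condition $a^{-1} \H = \K$, giving $T_{1}^{\H,\H} = I_{\H}$. This proves the first inverse in \eqref{2.3}. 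For the second inverse, Proposition \ref{2.5A}(b) applied to $a\K = \H$ gives $\bar{a}\H^{\perp} = \K^{\perp}$, equivalently $\bar{a}^{-1}\K^{\perp} = \H^{\perp}$. Then Theorem \ref{2.8A} applied to $T_{\bar a^{-1}}^{\K,\H} T_{\bar a}^{\H,\K}$ via the alternative hypothesis $\psi \H^{\perp} \subset \K_2^{\perp}$ (with $\psi = \bar a^{-1}$ and the appropriate annihilator inclusion) yields $T_{1}^{\H,\H} = I_{\H}$, and symmetrically the reversed composition equals $I_{\K}$.

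Finally, for \eqref{2.4}, I would compute the adjoint of $T_a^{\K,\H} = M_a|_{\K}$ directly: for $g \in \H$ and $f \in \K$,
\begin{equation}
\langle (T_a^{\K,\H})^{*} g, f \rangle = \langle g, a f \rangle = \langle \bar a g, f \rangle = \langle P_{\K}(\bar a g), f \rangle,
\end{equation}
so $(T_a^{\K,\H})^{*} = P_{\K} M_{\bar a}|_{\H} = T_{\bar a}^{\H,\K}$. Combining this with the just-established inverse formula $(T_{\bar a^{-1}}^{\K,\H})^{-1} = T_{\bar a}^{\H,\K}$ gives \eqref{2.4}.

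There is no real obstacle; the only thing to keep track of is matching the hypotheses of Theorem \ref{2.8A} to the correct one of the two multiplier conditions $a\K \subset \H$ or $\bar a^{-1}\K^{\perp} \subset \H^{\perp}$ (and the symmetric versions) each time a composition is evaluated. Invoking Proposition \ref{2.5A} to pass between the two conditions is what makes all four compositions collapse to identities.
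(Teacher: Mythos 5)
Your proof is correct and follows essentially the same route as the paper's (very terse) argument: the compositions collapse to identities via Theorem \ref{2.8A} together with Proposition \ref{2.5A}, and \eqref{2.4} follows from the adjoint identity $(T_{a}^{\K,\H})^{*}=T_{\bar a}^{\H,\K}$ combined with \eqref{2.3}. You have merely spelled out the bookkeeping that the paper leaves implicit.
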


\begin{proof}
The invertibility and the two equalities in \eqref{2.3} follow from Theorem \ref{2.8A} and the identities 
\begin{align*}
T_{\bar{a}^{-1}}^{\K, \H} & = (T_{a^{-1}}^{\H, \K})^{*} = [(T_{a}^{\K, \H})^{-1}]^{*} = [(T_{a}^{\K, \H})^{*}]^{-1}.  \qedhere
\end{align*} 
\end{proof}

The next two results relate the orthogonal projections $P_{\H}$ and $P_{\K}$ when $\H$ and $\K$ are related by multipliers. 

\begin{Proposition}\label{2.11A}
If $a \in \mathscr{G}\!L^{\infty}$ with $\H \subset a \K$, then 
\begin{equation}\label{2.5}
P_{\H} f = a P_{\K} a^{-1} P_{\H} f = P_{\H} \bar{a}^{-1} P_{\K} \bar{a}f \quad \mbox{for all $f \in L^2$}.
\end{equation}
\end{Proposition}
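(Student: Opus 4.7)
The plan is to prove the two equalities separately, both exploiting the hypothesis $\H \subset a\K$ together with Proposition \ref{2.5A}.

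For the first equality $P_\H f = a P_\K a^{-1} P_\H f$, I would observe that since $\H \subset a\K$ and $a \in \mathscr{G}\!L^{\infty}$, multiplying by $a^{-1}$ gives $a^{-1}\H \subset \K$. In particular, for any $f \in L^2$ the vector $P_\H f$ lies in $\H$, so $a^{-1}P_\H f \in \K$. Since $P_\K$ acts as the identity on $\K$, we get $P_\K a^{-1} P_\H f = a^{-1} P_\H f$, and multiplying through by $a$ yields the claim.

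For the second equality $P_\H f = P_\H \bar{a}^{-1} P_\K \bar{a} f$, the idea is to dualize using Proposition \ref{2.5A}. From $a^{-1}\H \subset \K$, applying part (a) of Proposition \ref{2.5A} (with $a^{-1}$ in the role of $a$ and the pair $(\K,\H)$ swapped) gives $\overline{a^{-1}}\K^\perp \subset \H^\perp$, i.e.\ $\bar a^{-1}\K^\perp \subset \H^\perp$. Now for any $f \in L^2$, decompose $\bar a f = P_\K \bar a f + P_{\K^\perp}\bar a f$ and multiply by $\bar a^{-1}$ to write
\begin{equation*}
f = \bar a^{-1} P_\K \bar a f + \bar a^{-1} P_{\K^\perp} \bar a f.
\end{equation*}
The second term lies in $\bar a^{-1}\K^\perp \subset \H^\perp$, so it is annihilated by $P_\H$, leaving $P_\H f = P_\H \bar a^{-1} P_\K \bar a f$, as required.

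There is no real obstacle here; the only point requiring care is to invoke Proposition \ref{2.5A} with the correct roles for the spaces and for the multiplier $a^{-1}$, since the hypothesis is an inclusion $\H \subset a\K$ rather than an equality. Everything else is routine projection bookkeeping.
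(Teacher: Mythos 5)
Your proof is correct and follows essentially the same route as the paper: the first equality is obtained exactly as in the paper from $a^{-1}\H\subset\K$, and the second from the dual inclusion $\bar a^{-1}\K^{\perp}\subset\H^{\perp}$ supplied by Proposition \ref{2.5A}. The only (cosmetic) difference is that you decompose $\bar a f$ directly, which proves the displayed identity verbatim, whereas the paper first inserts $\bar a P_{\H}f$; both are valid.
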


\begin{proof}
Since $a^{-1} \H \subset \K$ we have that 
$$P_{\H} f = a a^{-1} P_{\H} f =  a P_{\K} a^{-1} P_{\H} f \quad \mbox{for all $f \in L^2$}.$$
On the other hand, by Proposition \ref{2.5A}, $a^{-1} \H \subset \K \implies \bar{a}^{-1}\K^{\perp} \subset \H^{\perp}$, and so 
\begin{align*}
P_{\H} f & = P_{\H} \bar{a}^{-1} \bar{a} P_{\H} f = P_{\H} \bar{a}^{-1} P_{\K} \bar{a} P_{\H} f  \quad \mbox{for every $f \in L^2$.} \qedhere
\end{align*}
\end{proof}

\begin{Corollary}\label{2.12A}
If $a \in \mathscr{G}\!L^{\infty}$ with $a \K = \H$, then 
\begin{equation}\label{2.6}
P_{\H} = \widetilde{E} (P_{\K} \bar{a} I_{\operatorname{d}}) = (a P_{\K}) \widetilde{F},
\end{equation}
where
$
\widetilde{E} = P_{\H} \bar{a}^{-1} P_{\K}$ and $\widetilde{F} = P_{\K} a^{-1} P_{\H}
$
are such that 
$
T_{\bar{a}^{-1}}^{\K, \H} = \widetilde{E}|_{\K}$ and $T_{a^{-1}}^{\H, \K} = \widetilde{F}|_{\H}.
$
\end{Corollary}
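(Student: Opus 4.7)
The plan is to reduce everything to Proposition \ref{2.11A} together with the definition \eqref{9aiuerhgjfkdvc} of the generalized Toeplitz operators; there is no real obstacle here, only some bookkeeping with the idempotents $P_{\H}$ and $P_{\K}$.

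First I would note that the hypothesis $a\K = \H$ is stronger than the $\H \subset a\K$ needed to invoke Proposition \ref{2.11A}, so both equalities in \eqref{2.5} are available. Unpacking the definition of $\widetilde{F} = P_{\K}a^{-1}P_{\H}$ and using $P_{\K}^{2} = P_{\K}$, we have for every $f \in L^{2}$
\[
(aP_{\K})\widetilde{F} f = aP_{\K}P_{\K} a^{-1} P_{\H} f = aP_{\K}a^{-1}P_{\H}f = P_{\H}f,
\]
where the last equality is the first half of \eqref{2.5}. In exactly the same way, with $\widetilde{E} = P_{\H}\bar{a}^{-1}P_{\K}$,
\[
\widetilde{E}(P_{\K}\bar{a}\,I_{\operatorname{d}})f = P_{\H}\bar{a}^{-1}P_{\K}P_{\K}\bar{a}f = P_{\H}\bar{a}^{-1}P_{\K}\bar{a}f = P_{\H}f,
\]
now using the second half of \eqref{2.5}. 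This gives the chain of identities \eqref{2.6}.

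It then remains to identify $\widetilde{E}|_{\K}$ and $\widetilde{F}|_{\H}$ as the asserted generalized Toeplitz operators. For $k \in \K$ we have $P_{\K}k = k$, so
\[
\widetilde{E}|_{\K} k = P_{\H}\bar{a}^{-1}P_{\K}k = P_{\H}\bar{a}^{-1}k = P_{\H}M_{\bar{a}^{-1}}|_{\K}k = T_{\bar{a}^{-1}}^{\K,\H}k,
\]
directly from \eqref{9aiuerhgjfkdvc}. The argument for $\widetilde{F}|_{\H} = T_{a^{-1}}^{\H,\K}$ is identical, using $P_{\H}h = h$ for $h \in \H$. This completes the plan; everything is an immediate consequence of Proposition \ref{2.11A} and the definition of generalized Toeplitz operators, so no step is genuinely hard.
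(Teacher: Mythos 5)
Your proposal is correct and follows exactly the route the paper intends: the corollary is stated without proof precisely because it is the immediate consequence of Proposition \ref{2.11A} (using $a\K=\H \Rightarrow \H \subset a\K$) plus the idempotence of $P_{\K}$, $P_{\H}$ and the definition \eqref{9aiuerhgjfkdvc}, which is what you carry out.
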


We now formulate our main theorem regarding equivalence of generalized Toeplitz operators. Various reformulations of this will be provided below (for Toeplitz, Hankel, and truncated Toeplitz operators). In \S 6 we will see an application of this to the inverse of a truncated Toeplitz operator, and in \S 7 we reformulate this for dual truncated Toeplitz operators.

\begin{Theorem}\label{2.13A}
Let $\H_1, \H_2, \K_1, \K_2$ be closed subspaces of $L^2$ and  $a_1, a_2 \in \mathscr{G}\!L^{\infty}$ with $a_1 \K_1 = \H_1$ and $a_2 \K_2 = \H_2$. If $\phi, \psi \in \mathscr{G}\!L^{\infty}$ with $\phi = \bar{a}_2 \psi a_1$, then 
\begin{equation}\label{2.9}
T_{\phi}^{\K_1, \K_2} = E T_{\psi}^{\H_1, \H_2} F,
\end{equation}
where 
\begin{equation}\label{2.10}
E = T_{\bar{a}_2}^{\H_2, \K_2} \quad \mbox{and} \quad F =  T_{a_1}^{\K_1, \H_1} = M_{a_1}|_{\K_1}.
\end{equation}
Thus, the generalized Toeplitz operators $T_{\phi}^{\K_1, \K_2}$ and $T_{\psi}^{\H_1, \H_2}$ are equivalent. 
\end{Theorem}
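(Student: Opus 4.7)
The plan is to obtain the factorization \eqref{2.9} as a double application of the multiplicative identity in Theorem \ref{2.8A}: first I peel off the right factor $F = T_{a_1}^{\K_1,\H_1}$ and then the left factor $E = T_{\bar a_2}^{\H_2,\K_2}$, verifying in each case exactly one of the two alternative hypotheses of that theorem. The invertibility of $E$ and $F$, and hence the actual equivalence in the sense of \eqref{endnfddnfndffff}, is not something that needs fresh work — it is already recorded in Proposition \ref{2.10AA}.

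For the first step, I would write $\phi = (\bar a_2 \psi)\,a_1$ and apply Theorem \ref{2.8A} with symbols $\bar a_2 \psi$ and $a_1$ and with intermediate space $\H_1$. The relevant hypothesis to check is the second alternative, namely that the right-hand symbol is a multiplier $\K_1 \to \H_1$. But this is precisely the assumption $a_1\K_1 = \H_1$ of the theorem, so Theorem \ref{2.8A} yields
\begin{equation}
T_\phi^{\K_1,\K_2} \;=\; T_{\bar a_2 \psi}^{\H_1,\K_2}\, T_{a_1}^{\K_1,\H_1}.
\end{equation}

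For the second step, I would apply Theorem \ref{2.8A} again to $T_{\bar a_2 \psi}^{\H_1,\K_2}$ with symbols $\bar a_2$ and $\psi$ and with intermediate space $\H_2$. This time the convenient hypothesis to verify is the first alternative: $\bar a_2 \H_2^{\perp} \subset \K_2^{\perp}$. This follows immediately from the given $a_2\K_2 = \H_2$ by Proposition \ref{2.5A}(b), which gives in fact the equality $\bar a_2 \H_2^{\perp} = \K_2^{\perp}$. Hence
\begin{equation}
T_{\bar a_2 \psi}^{\H_1,\K_2} \;=\; T_{\bar a_2}^{\H_2,\K_2}\, T_\psi^{\H_1,\H_2},
\end{equation}
and chaining the two identities produces \eqref{2.9} with $E$ and $F$ as in \eqref{2.10}. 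The final sentence about equivalence then follows because Proposition \ref{2.10AA} guarantees that $E$ and $F$ are bounded and invertible, so $T_\phi^{\K_1,\K_2}$ and $T_\psi^{\H_1,\H_2}$ are related as in \eqref{endnfddnfndffff}.

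There is essentially no serious obstacle; the only thing to be careful about is bookkeeping — matching the correct alternative of Theorem \ref{2.8A} to the correct step (the ``right'' factorization uses the multiplier property of $a_1$ on $\K_1$, while the ``left'' factorization uses the dual multiplier property of $\bar a_2$ on $\H_2^{\perp}$). Neither the invertibility of $\phi$ and $\psi$ in $\mathscr{G}\!L^{\infty}$ nor any further hypothesis is used in establishing the identity itself; invertibility of the symbols is only needed at the meta-level to keep the equivalence symmetric between $T_\phi^{\K_1,\K_2}$ and $T_\psi^{\H_1,\H_2}$.
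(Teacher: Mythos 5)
Your argument is correct: both applications of Theorem \ref{2.8A} are legitimate (the right factorization uses $a_1\K_1\subset\H_1$, the left one uses $\bar a_2\H_2^{\perp}\subset\K_2^{\perp}$, which Proposition \ref{2.5A} supplies from $a_2\K_2=\H_2$), and Proposition \ref{2.10AA} does give the invertibility of $E$ and $F$. The route is, however, not the one the paper writes down. The paper's proof goes through Corollary \ref{2.12A}: it rewrites the projections as $P_{\K_2}=T_{\bar a_2}^{\H_2,\K_2}P_{\H_2}\bar a_2^{-1}$ and $P_{\K_1}=a_1^{-1}P_{\H_1}a_1P_{\K_1}$ and substitutes directly into $P_{\K_2}\phi P_{\K_1}|_{\K_1}$, absorbing the factors $\bar a_2^{-1}$ and $a_1^{-1}$ into the symbol via $\bar a_2^{-1}\phi a_1^{-1}=\psi$. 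Your double application of the Brown--Halmos-type product formula is precisely the alternative the authors themselves flag in Remark \ref{2.13B}, where they observe that Theorem \ref{2.8A} yields the identity \eqref{2.10A} under the weaker hypotheses $a_1\K_1\subset\H_1$ and $a_2\K_2\subset\H_2$ (with only the invertibility of $E$ and $F$ then at stake). So your proof buys a slightly cleaner separation between the algebraic identity (which needs only the inclusion hypotheses) and the equivalence claim (which needs surjectivity), while the paper's projection computation is more self-contained and makes the role of the ambient $L^2$ multiplications explicit. Your closing observation that $\phi,\psi\in\mathscr{G}\!L^{\infty}$ is never used in the identity is also accurate; in fact it is not needed for the equivalence statement either, since invertibility of $E$ and $F$ comes entirely from $a_1,a_2$.
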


\begin{proof}
The equivalence in \eqref{2.9} follows from Corollary \ref{2.12A} since we have 
\begin{align*} P_{\K_2} & = T_{\bar{a}_2}^{\H_2, \K_2} P_{\H_2} \bar{a}_{2}^{-1} I_{L_2} \quad \mbox{and} \quad P_{\K_1} = a_{1}^{-1} P_{\H_1} a_1 P_{\K_1} = a_{1}^{-1} T_{a_1}^{\K_1, \H_1}. \qedhere
\end{align*}
\end{proof}

\begin{Remark}\label{2.13B}
By Theorem \ref{2.8A}, the identity 
\begin{equation}\label{2.10A}
T_{\phi}^{\K_1, \K_2} = E T_{\psi}^{\H_1, \H_2} F,
\end{equation}
where $\phi = \bar{a}_2 \psi a_1$, $E = T_{\bar{a}_2}^{\H_2, \K_2}$,  and $F = T_{a_1}^{\K_1, \H_1},$
holds if we impose the weaker conditions $a_1 \K_1 \subset \H_1$ and $a_2 \K_2 \subset \H_2$. However, with these weaker conditions, the operators $E$ and $F$ may not be invertible and so \eqref{2.10A} does not provide an equivalence between $T_{\phi}^{\K_1, \K_2}$ and $T_{\psi}^{\H_1, \H_2}$ as in \eqref{endnfddnfndffff}.
\end{Remark}


When $\H_1 = \H_2 = \H$, $\K_1 = \K_2 = \K$, Theorem \ref{2.13A} becomes the following.

\begin{Corollary}\label{2.15A}
Let $a_1, a_2 \in \mathscr{G}\!L^{\infty}$ such that $a_i \K = \H$, $i = 1, 2$. Then if $\phi, \psi \in L^{\infty}$ with 
$\phi = \bar{a}_2\psi a_1$, we have 
\begin{equation}\label{2.12}
T_{\phi}^{\K} = E T_{\psi}^{\H} F,
\end{equation}
with 
$
E = T_{\bar{a}_2}^{\H, \K}$ and $F = T_{a_1}^{\K, \H} = M_{a_1}|_{\K}.$
\end{Corollary}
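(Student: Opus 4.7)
The plan is simply to invoke Theorem \ref{2.13A} with the specialization $\H_1 = \H_2 = \H$ and $\K_1 = \K_2 = \K$. Under this identification, the hypothesis $a_i \K = \H$ for $i = 1, 2$ coincides exactly with the requirement $a_i \K_i = \H_i$ of the theorem, and the relation $\phi = \bar{a}_2 \psi a_1$ in the corollary is identical to the one assumed in the theorem.

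With these substitutions, the conclusion of Theorem \ref{2.13A}, namely $T_{\phi}^{\K_1, \K_2} = E T_{\psi}^{\H_1, \H_2} F$ with $E = T_{\bar{a}_2}^{\H_2, \K_2}$ and $F = T_{a_1}^{\K_1, \H_1}$, collapses to $T_{\phi}^{\K} = E T_{\psi}^{\H} F$ with $E = T_{\bar{a}_2}^{\H, \K}$ and $F = T_{a_1}^{\K, \H}$, which is exactly the stated identity. The only small additional observation is that $T_{a_1}^{\K, \H}$ coincides with $M_{a_1}|_{\K}$: since $a_1 \K = \H$, we have $a_1 \K \subset \H$, so the projection $P_{\H}$ in the definition of $T_{a_1}^{\K, \H}$ acts as the identity on $a_1 \K$. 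This is precisely the first equality of \eqref{2.2} from Proposition \ref{2.10AA}.

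Because this is a direct specialization, I do not expect any genuine obstacle: all the substantive work, namely the invertibility of $E$ and $F$ as generalized Toeplitz operators (Proposition \ref{2.10AA}) and the factorization of the projections $P_{\K_1}, P_{\K_2}$ via Corollary \ref{2.12A} combined with the Brown--Halmos-type composition rule from Theorem \ref{2.8A}, has already been carried out in the proof of Theorem \ref{2.13A}. The only care required is notational bookkeeping, since although the codomain/domain pairs are now identified, the two multipliers $a_1$ and $a_2$ need not be equal, and the operators $E$ and $F$ are built from $a_2$ and $a_1$ respectively.
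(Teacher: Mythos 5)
Your proposal is correct and is exactly the paper's route: Corollary \ref{2.15A} is stated there as the immediate specialization of Theorem \ref{2.13A} to $\H_1 = \H_2 = \H$, $\K_1 = \K_2 = \K$, with no further argument needed. Your extra remark identifying $T_{a_1}^{\K,\H}$ with $M_{a_1}|_{\K}$ via Proposition \ref{2.10AA} is also how the paper justifies that equality.
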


At this point, it makes sense to ask under what conditions does \eqref{2.12} define a similarity or a unitary equivalence. Regarding similarity, we have the following.

\begin{Proposition}\label{2.16A}
With the assumptions and notation as in Corollary \ref{2.15A}, the following are equivalent:
\begin{enumerate}
\item  the operators $T_{\phi}^{\K}$ and $T_{\psi}^{\H}$ are similar via \eqref{2.12}; 
\item $E = F^{-1}$; 
\item $T_{\bar{a}_2 - a_{1}^{-1}}^{\H, \K} = 0$; 
\item $T_{\bar{a}_2^{-1} - a_1}^{\K, \H} = 0$; 
\item $T_{1 - \bar{a}_2 a_1}^{\K} = 0$; 
\item $T_{1 - \bar{a}_2^{-1} a_{1}^{-1}}^{\H} = 0$.
\end{enumerate}
\end{Proposition}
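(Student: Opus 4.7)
The strategy is to use (b) as a central hub and show directly that each of (a), (c), (d), (e), (f) is equivalent to it. This gives all the equivalences at once without having to chase a cycle. The pairing is symmetric in nature: (c) and (e) package the relation $E = F^{-1}$ using the factor $a_1$, while (d) and (f) package the equivalent relation $F = E^{-1}$ using the factor $a_1^{-1}$.

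For (a) $\Leftrightarrow$ (b), I would just unpack the definitions. By Corollary \ref{2.15A}, the identity \eqref{2.12} is automatically an equivalence of generalized Toeplitz operators; it is promoted to a similarity precisely when the two invertible intertwiners $E$ and $F$ are mutually inverse.

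For (b) $\Leftrightarrow$ (c) and (b) $\Leftrightarrow$ (d), the key is Proposition \ref{2.10AA}, which rewrites $F^{-1}$ and $E^{-1}$ themselves as generalized Toeplitz operators: $F^{-1} = T_{a_1^{-1}}^{\H,\K}$ and $E^{-1} = T_{\bar{a}_2^{-1}}^{\K,\H}$. Linearity of the symbol-to-operator map then turns $E = F^{-1}$ into (c) and $F = E^{-1}$ into (d).

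For (b) $\Leftrightarrow$ (e) and (b) $\Leftrightarrow$ (f), the main tool is the composition rule in Theorem \ref{2.8A}. I would factor $T_{\bar{a}_2 a_1}^{\K}$ through the intermediate space $\H$ to obtain $EF$, and factor $T_{\bar{a}_2^{-1} a_1^{-1}}^{\H}$ through the intermediate space $\K$ to obtain $E^{-1} F^{-1}$; in each application the hypothesis ``$\phi \K_1 \subset \H$'' holds with equality because $a_1 \K = \H$ (respectively $a_1^{-1} \H = \K$). Noting that $T_1^{\K}$ and $T_1^{\H}$ are the identity operators on their respective spaces, (e) becomes $EF = I_{\K}$ and (f) becomes $E^{-1} F^{-1} = I_{\H}$; invertibility of $E$ and $F$ (Proposition \ref{2.10AA}) then forces $E = F^{-1}$ in each case, returning us to (b). There is no genuine obstacle in this argument; the only care required is to select the correct one of the two alternative hypotheses in Theorem \ref{2.8A} each time, which is immediate since $a_1$ and $a_1^{-1}$ are known multipliers between $\K$ and $\H$.
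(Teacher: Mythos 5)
Your proposal is correct and follows essentially the same route as the paper: both rest on Proposition \ref{2.10AA} to identify $F^{-1}=T_{a_1^{-1}}^{\H,\K}$ and $E^{-1}=T_{\bar{a}_2^{-1}}^{\K,\H}$ as generalized Toeplitz operators (giving (c) and (d) by linearity of the symbol map) and on the composition rule of Theorem \ref{2.8A} to reach (e) and (f). The only cosmetic difference is organizational: the paper derives (f) from (c) by factoring the symbol $\bar{a}_2 - a_1^{-1}=\bar{a}_2\bigl(1-a_1^{-1}\bar{a}_2^{-1}\bigr)$ and cancelling the invertible factor $E$, whereas you go hub-and-spoke from (b), writing $T_{\bar{a}_2 a_1}^{\K}=EF$ and $T_{\bar{a}_2^{-1}a_1^{-1}}^{\H}=E^{-1}F^{-1}$ and comparing with the identity; the two computations are interchangeable.
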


\begin{proof}
Observe that Proposition \ref{2.10AA} yields 
\begin{align*}
E = F^{-1} & \iff T_{\bar{a}_2}^{\H, \K} = T_{a_{1}^{-1}}^{\H, \K}
 \iff T_{\bar{a}_2 - a_{1}^{-1}}^{\H, \K} = 0\\
& \iff P_{\K} \bar{a}_2 \big(1 - a_{1}^{-1} \bar{a}_2^{-1}\big) P_{\H}|_{\H} = 0\\
& \iff (P_{\K} \bar{a}_2 P_{\H}) \Big(P_{\H} (1 - a_{1}^{-1} \bar{a}_2^{-1}) P_{\H}\Big)|_{\H} = 0\\
& \iff E T_{1 - \bar{a}_2^{-1} a_{1}^{-1}}^{\H} = 0
 \iff T_{1 - \bar{a}_2^{-1} a_{1}^{-1}}^{\H} = 0,
\end{align*}
where we have taken into account that, by Corollary \ref{2.5A}, $\bar{a}_2 \H^{\perp} = \K^{\perp}$ and $E$ is invertible. Analogously, Proposition \ref{2.10AA} yields
\begin{align*}
E^{-1} = F \iff T_{\bar{a}_2^{-1} - a_1}^{\K, \H} = 0
 \iff E^{-1} T_{1 - a_1 \bar{a}_2}^{\K} = 0
 \iff T_{1 - a_{1} \bar{a}_2}^{\K} = 0,
\end{align*}
where we took into account that $\bar{a}_2^{-1} \K^{\perp} = \H^{\perp}$.
\end{proof}

\begin{Remark}
Naturally, the operator $F^{-1} T_{\psi}^{\H} F: \K \to \K$ is similar to $T^{\H}_{\psi}$ for any invertible $F$ on $\K$. However, in general, it is not a generalized Toeplitz operator of the form $T^{\K}_{\psi}$ (see Example \ref{6.3}).
\end{Remark}

Regarding the possibility that \eqref{2.12} defines a unitary equivalence between the operators $T_{\phi}^{\K}$ and $T_{\psi}^{\H}$, which can happen only when the multiplication operator $M_{a_1}: \K \to \H$ is unitary, we have the following. 

\begin{Corollary}\label{2.17A}
With the same assumptions as in Corollary \ref{2.15A}, the following are equivalent: 
\begin{enumerate}
\item the identity in \eqref{2.12} is a unitary equivalence;
\item  $E = F^{-1} = F^{*}$; 
\item $T_{1 - \bar{a}_2 a_1}^{\K} = 0$ and $T_{1 - |a_1|^2}^{\K} = 0$.
\end{enumerate}
\end{Corollary}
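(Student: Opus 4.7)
The plan is to split the corollary into the two equivalences $(1) \iff (2)$ and $(2) \iff (3)$, handling the first by the definition of unitary equivalence and the second by combining Proposition \ref{2.16A} with a direct computation of $F^{*}F$.

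For $(1) \iff (2)$, I would simply unwind the definitions. The identity \eqref{2.12} expresses $T_{\phi}^{\K}$ as $E \circ T_{\psi}^{\H} \circ F$ with $F: \K \to \H$ and $E: \H \to \K$. For this to be a unitary equivalence, we need $F$ to be unitary (i.e., $F^{*} = F^{-1}$) and $E$ to be the inverse $F^{-1}$. Combining these two requirements gives precisely $E = F^{-1} = F^{*}$, which is (2). Conversely, if (2) holds, then $F$ is unitary and \eqref{2.12} becomes $T_{\phi}^{\K} = F^{*} T_{\psi}^{\H} F$, which is a unitary equivalence.

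For $(2) \iff (3)$, I would handle the two conditions in (2) separately. First, the equivalence $E = F^{-1} \iff T_{1 - \bar{a}_{2} a_{1}}^{\K} = 0$ is already established in Proposition \ref{2.16A} as the equivalence of items (b) and (e). Second, since $F = M_{a_{1}}|_{\K}$, its Hilbert-space adjoint $F^{*}: \H \to \K$ acts by $F^{*} h = P_{\K}(\bar{a}_{1} h)$, so for $k \in \K$,
\begin{equation*}
F^{*} F k = P_{\K}(\bar{a}_{1} a_{1} k) = P_{\K}(|a_{1}|^{2} k) = T_{|a_{1}|^{2}}^{\K} k.
\end{equation*}
Hence $F^{*} F = I_{\K}$ if and only if $T_{|a_{1}|^{2}}^{\K} = I_{\K}$, i.e., $T_{1 - |a_{1}|^{2}}^{\K} = 0$. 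Because $F$ is invertible (Proposition \ref{2.10AA}), the single identity $F^{*} F = I_{\K}$ already forces $F^{*} = F^{-1}$, so we do not need a separate condition for $F F^{*} = I_{\H}$.

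Putting these together, (2) holds if and only if both $T_{1 - \bar{a}_{2} a_{1}}^{\K} = 0$ and $T_{1 - |a_{1}|^{2}}^{\K} = 0$, which is condition (3). The argument is essentially bookkeeping once the computation of $F^{*}F$ is carried out; there is no real obstacle, since the heavy lifting is done by Proposition \ref{2.16A} and the straightforward identification of the adjoint of the restricted multiplication operator.
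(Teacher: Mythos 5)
Your proposal is correct and follows essentially the same route as the paper: the first condition in (c) handles $E = F^{-1}$ via Proposition \ref{2.16A}, and the second handles $F^{*} = F^{-1}$, which the paper leaves implicit but which your computation $F^{*}F = T_{|a_1|^2}^{\K}$ justifies cleanly. The only addition you make is spelling out the adjoint of $M_{a_1}|_{\K}$ and noting that invertibility of $F$ lets one condition $F^{*}F = I_{\K}$ suffice, which is a correct and worthwhile elaboration of the paper's one-line argument.
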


\begin{proof}
The condition $T_{1 - \bar{a}_2 a_1}^{\K} = 0$ is equivalent to $E = F^{-1}$ while the condition $T_{1 - |a_1|^2}^{\K} = 0$ is equivalent to $F^{-1} = F^{*}$.
\end{proof}

\begin{Corollary}\label{2.18A}
When $\H_1 = \H_2 = \H$, $\K_1 = \K_2 = \K$, $a_1 = a_2 = a \in \mathscr{G}\!L^{\infty}$, and $a \K = \H$, the identity in \eqref{2.12} takes the form 
\begin{equation}\label{2.16}
T_{\phi}^{\K} = F^{*} T_{\psi}^{\H} F,
\end{equation}
with $F = T_{a}^{\K, \H} = M_{a}|_{\K}$ and $\psi = |a|^{-2} \phi$,
and $T_{\phi}^{\K}$ is unitarily equivalent to $T_{\psi}^{\H}$ if and only if $T_{1 - |a|^2}^{\K} = 0,$
or equivalently, $
T_{\bar{a} - a^{-1}}^{\H, \K} = 0.
$
In this case, $F^{*} = F^{-1}$, making $M_{a}|_{\K}: \K \to \H$ an isometric isomorphism.

\end{Corollary}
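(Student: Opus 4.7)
The plan is to derive this corollary as a direct specialization of Corollary \ref{2.15A}, Corollary \ref{2.17A}, and Proposition \ref{2.16A} to the case $a_1 = a_2 = a$, so the work amounts to identifying how the general formulas collapse in this symmetric setting.

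First, I would apply Corollary \ref{2.15A} with $a_1 = a_2 = a$. The relation $\phi = \bar{a}_2 \psi a_1$ becomes $\phi = |a|^2 \psi$, and since $a \in \mathscr{G}\!L^{\infty}$ forces $|a|^2 \in \mathscr{G}\!L^{\infty}$, this rearranges to $\psi = |a|^{-2} \phi$. Corollary \ref{2.15A} then delivers
\begin{equation*}
T_{\phi}^{\K} = E\, T_{\psi}^{\H}\, F, \qquad E = T_{\bar{a}}^{\H, \K}, \quad F = T_{a}^{\K, \H} = M_a|_{\K}.
\end{equation*}
To turn this into \eqref{2.16}, I must show $E = F^{*}$. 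This is where I invoke Proposition \ref{2.10AA}: identity \eqref{2.4} gives $T_{\bar{a}^{-1}}^{\K, \H} = [(T_a^{\K, \H})^{*}]^{-1}$, so $(T_a^{\K, \H})^{*} = [T_{\bar{a}^{-1}}^{\K, \H}]^{-1}$, and then \eqref{2.3} identifies this inverse with $T_{\bar{a}}^{\H, \K}$. Hence $F^{*} = T_{\bar{a}}^{\H, \K} = E$, proving \eqref{2.16}.

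Next, for the unitary equivalence characterization, I would invoke Corollary \ref{2.17A} in the specialization $a_1 = a_2 = a$. The two listed conditions $T_{1 - \bar{a}_2 a_1}^{\K} = 0$ and $T_{1 - |a_1|^2}^{\K} = 0$ both collapse to the single condition $T_{1 - |a|^2}^{\K} = 0$, since $\bar{a}_2 a_1 = |a|^2$. This gives the first form of the criterion. For the equivalent form $T_{\bar{a} - a^{-1}}^{\H, \K} = 0$, I would cite Proposition \ref{2.16A}(c) in the specialization $a_1 = a_2 = a$: the condition $T_{\bar{a}_2 - a_1^{-1}}^{\H, \K} = 0$ becomes exactly $T_{\bar{a} - a^{-1}}^{\H, \K} = 0$, and Proposition \ref{2.16A} establishes its equivalence with $T_{1 - \bar{a}_2 a_1}^{\K} = 0 = T_{1 - |a|^2}^{\K}$.

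Finally, under this equivalent condition, $F^{*} = F^{-1}$ holds by construction (this was the content of Corollary \ref{2.17A}(b) in this specialization), so $F = M_a|_{\K}$ is unitary, i.e., an isometric isomorphism from $\K$ onto $\H$. I do not anticipate any real obstacle here; the only point that requires care is verifying that the two seemingly distinct conditions of Corollary \ref{2.17A} truly coincide when $a_1 = a_2$, which is immediate from $\bar{a}_2 a_1 = \bar a a = |a|^2$.
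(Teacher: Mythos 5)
Your proposal is correct and follows exactly the route the paper intends: Corollary \ref{2.18A} is stated without proof as an immediate specialization of Corollary \ref{2.15A}, with $E=F^{*}$ identified via Proposition \ref{2.10AA} and the unitary-equivalence criterion read off from Proposition \ref{2.16A} and Corollary \ref{2.17A}. All the specializations you carry out (the collapse of the two conditions of Corollary \ref{2.17A} to $T_{1-|a|^2}^{\K}=0$, and the equivalence with $T_{\bar a - a^{-1}}^{\H,\K}=0$ via Proposition \ref{2.16A}(c)) check out.
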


The question of similarity or unitary equivalence is therefore related to the delicate question of characterizing the functions $\eta \in L^{\infty}$ for which $T_{\eta}^{\H, \K} = 0$. For certain $\H$ and $\K$, this takes place only when the symbol is zero. For other choices of $\H$ and $\K$, however, there is often a rich variety of symbols $\eta$ corresponding to the zero generalized Toeplitz operator. We will explore this in the next several sections. Let us now discuss Theorem \ref{2.13A} in the case of the classical Toeplitz and Hankel operators as well as truncated Toeplitz operators.

\subsection*{Classical Toeplitz operators}
Consider the classical Toeplitz operators $T_{\phi}$ on $H^2$, where $\phi \in L^{\infty}$ (recall the definition in \eqref{toooeepizttzzzz}). Setting $\H_1 = \H_2 = \K_1 = \K_2 = H^2$ in Theorem \ref{2.13A}, we have the following equivalence.

\begin{Theorem}\label{2.19A}
Let $a_1, a_2 \in \mathscr{G}\!H^{\infty}$ and $\phi, \psi \in L^{\infty}$ with $\phi = \bar{a}_2 \psi a_1$. Then
\begin{equation}\label{2.19}
T_{\phi} = T_{\bar{a}_2} T_{\psi} T_{a_1}
\end{equation}
and thus $T_{\phi}$ is equivalent to $T_{\psi}$. 
\end{Theorem}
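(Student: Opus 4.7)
The plan is to derive Theorem \ref{2.19A} as a direct specialization of Theorem \ref{2.13A} to the setting $\H_1 = \H_2 = \K_1 = \K_2 = H^2$. The first thing to check is that the hypotheses of Theorem \ref{2.13A} are met: we need $a_i \K_i = \H_i$, which here means $a_i H^2 = H^2$. This is precisely the standard characterization (recalled in \S\ref{section2}) of $\mathscr{G}\!H^\infty$ as the multipliers of $H^2$ onto itself, so the condition on $a_1, a_2$ in the statement is exactly what is needed. Then, with $\phi = \bar{a}_2 \psi a_1$, Theorem \ref{2.13A} immediately produces
\begin{equation}
T_{\phi}^{H^2, H^2} = T_{\bar{a}_2}^{H^2, H^2}\, T_{\psi}^{H^2, H^2}\, T_{a_1}^{H^2, H^2},
\end{equation}
which is just the asserted identity $T_{\phi} = T_{\bar{a}_2} T_{\psi} T_{a_1}$ after translating notation.

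For the equivalence claim, I would invoke Proposition \ref{2.10AA}: since $a_1 \in \mathscr{G}\!H^\infty \subset \mathscr{G}\!L^\infty$ and $a_1 H^2 = H^2$, the operator $T_{a_1} = M_{a_1}|_{H^2}$ is invertible with inverse $T_{a_1^{-1}}$, and $T_{\bar{a}_2}$ is invertible with inverse $T_{\bar{a}_2^{-1}}$ (one also sees this directly from $T_{\bar{a}_2} = (T_{a_2})^*$ together with invertibility of $T_{a_2}$). These play the roles of $F$ and $E$ from \eqref{endnfddnfndffff}, so $T_\phi$ and $T_\psi$ are equivalent in the sense of \eqref{endnfddnfndffff}.

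There is really no obstacle here; the whole content has already been packaged into Theorem \ref{2.13A}. If one preferred a self-contained argument, the cleanest alternative would bypass Theorem \ref{2.13A} and apply Theorem \ref{2.8A} twice. Concretely, since $a_1 \in H^\infty$ gives $a_1 H^2 \subset H^2$, Theorem \ref{2.8A} yields $T_{\psi a_1} = T_{\psi} T_{a_1}$; and since $a_2 \in H^\infty$ gives $\bar{a}_2 (H^2)^{\perp} \subset (H^2)^{\perp}$, applying Theorem \ref{2.8A} again yields $T_{\bar{a}_2 \psi a_1} = T_{\bar{a}_2} T_{\psi a_1}$. Combining these with the hypothesis $\phi = \bar{a}_2 \psi a_1$ gives the factorization, and invertibility of $T_{a_1}$, $T_{\bar{a}_2}$ follows as above from $a_1, a_2 \in \mathscr{G}\!H^\infty$. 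Either route produces a proof of only a few lines.
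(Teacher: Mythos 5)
Your proof is correct and is exactly the paper's route: the authors state Theorem \ref{2.19A} with no further argument beyond ``setting $\H_1 = \H_2 = \K_1 = \K_2 = H^2$ in Theorem \ref{2.13A},'' which is precisely your main derivation. Your alternative via two applications of Theorem \ref{2.8A} is a nice touch, since Theorem \ref{2.13A} as literally stated assumes $\phi,\psi\in\mathscr{G}\!L^\infty$ while Theorem \ref{2.19A} only assumes $\phi,\psi\in L^\infty$ (a hypothesis the paper's own proof of Theorem \ref{2.13A} never uses), and your second route avoids that cosmetic mismatch entirely.
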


The alert reader will recognize an alternative proof of Theorem \ref{2.19A} from the well known Brown--Halmos theorem  \cite[Thm.~8]{BH}. One will also recognize a converse to Theorem \ref{2.19A} in that if $T_{\phi}  = T_{\bar{a}_2} T_{\psi} T_{a_1}$ with $a_1, a_2 \in \mathscr{G}\!H^{\infty}$, then $T_{\phi} = T_{\bar{a}_2 \psi a_1}$ and, by the uniqueness of symbols for Toeplitz operators \cite[p.~179]{MR0361893}, $\phi = \bar{a}_2 \psi a_1$. In the following section we will recast this in terms of equivalence the symbols of Toeplitz operators. The uniqueness of these symbols, and the fact that $H^{\infty} \cap \overline{H^{\infty}} = \C$, also shows that \eqref{2.19} {\em never} provides a similarity between two Toeplitz operators unless we are in the degenerate case where $a_1$ and $a_2$ are constant functions. 

\subsection*{Classical Hankel operators} For $\phi \in L^{\infty}$ recall the classical {\em Hankel operator} $H_{\phi}: H^2 \to (H^2)^{\perp}$ defined by $H_{\phi} = P_{-} \phi P_{+}|_{H^2}$ \cite{MR985586,MR1630646}. Taking $\H_1 = \K_1 = H^2$ and $\H_2 = \K_2 = (H^2)^{\perp}$ in Theorem \ref{2.13A}, we have the following. 

\begin{Theorem}\label{2.20A}
Let $a_1, a_2 \in \mathscr{G}\!H^{\infty}$ and $\phi, \psi \in L^{\infty}$ with $\phi = a_2 \psi  a_1$. Then 
$$H_{\phi} = S_{a_{2}} H_{\psi} T_{a_{1}},$$
where $S_{a_{2}} = P_{-} a_{2} P_{-}|_{(H^2)^{\perp}}$ is an invertible dual Toeplitz operator  and  $T_{a_{1}}$ is an invertible Toeplitz  operator. Thus, $H_{\phi}$ and $H_{\psi}$ are equivalent Hankel operators. 
\end{Theorem}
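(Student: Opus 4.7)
The approach is to obtain Theorem~\ref{2.20A} as a direct specialization of Theorem~\ref{2.13A} to the setting $\K_1 = \H_1 = H^2$ and $\K_2 = \H_2 = (H^2)^\perp$, in which case both $T_\phi^{\K_1,\K_2}$ and $T_\psi^{\H_1,\H_2}$ coincide with the classical Hankel operators $H_\phi$ and $H_\psi$. The only real bookkeeping point is that Theorem~\ref{2.13A} is stated with the factorization $\phi = \bar{a}_2 \psi a_1$, while here the hypothesis is $\phi = a_2 \psi a_1$; I would reconcile this by feeding the conjugate $\bar{a}_2$ into Theorem~\ref{2.13A} in the role of its second multiplier.

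First I would verify that the two multiplier hypotheses of Theorem~\ref{2.13A} are met. The condition $a_1 H^2 = H^2$ is immediate from $a_1 \in \mathscr{G}\!H^{\infty}$. For the second: $a_2 \in \mathscr{G}\!H^{\infty}$ gives $a_2 H^2 = H^2$, and Proposition~\ref{2.5A}(b) converts this to $\bar{a}_2 (H^2)^\perp = (H^2)^\perp$, so $\bar{a}_2$ is an admissible invertible multiplier from $(H^2)^\perp$ onto itself. With Theorem~\ref{2.13A}'s second multiplier taken to be $\bar{a}_2$, the factorization hypothesis $\phi = \overline{\bar{a}_2}\,\psi\, a_1 = a_2 \psi a_1$ is exactly the hypothesis of Theorem~\ref{2.20A}.

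Theorem~\ref{2.13A} then produces the factorization
\[
H_\phi \;=\; T_\phi^{H^2, (H^2)^\perp} \;=\; E\, T_\psi^{H^2, (H^2)^\perp}\, F \;=\; E\, H_\psi\, F,
\]
where $E = T_{\overline{\bar{a}_2}}^{(H^2)^\perp, (H^2)^\perp} = T_{a_2}^{(H^2)^\perp, (H^2)^\perp} = P_- a_2 P_-|_{(H^2)^\perp} = S_{a_2}$ and $F = T_{a_1}^{H^2, H^2} = T_{a_1}$. Invertibility of both $E$ and $F$ is part of the conclusion of Theorem~\ref{2.13A}, which in turn invokes Proposition~\ref{2.10AA} applied to the multipliers $\bar{a}_2$ and $a_1$ respectively. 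The main thing to watch is the conjugation bookkeeping between the two theorems' conventions; once that is cleanly aligned the Hankel factorization follows immediately, with no substantive computation required.
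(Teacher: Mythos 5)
Your proposal is correct and is exactly the paper's route: the paper obtains Theorem \ref{2.20A} by setting $\H_1 = \K_1 = H^2$ and $\H_2 = \K_2 = (H^2)^{\perp}$ in Theorem \ref{2.13A}, and your careful handling of the conjugation (feeding $\bar{a}_2$ into Theorem \ref{2.13A} as the multiplier of $(H^2)^{\perp}$ onto itself via Proposition \ref{2.5A}) is precisely the bookkeeping the paper leaves implicit.
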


See \cite{MR4041541, MR1885661} for more on the dual of a Toeplitz operator. The non-uniqueness of symbols of Hankel operators (indeed $H_{\phi} \equiv 0$ if and only of $\phi \in H^{\infty}$) prevents us from formulating a converse of Theorem \ref{2.20A} as we did for Toeplitz operators. See \S \ref{sectionthree} where we address this issue further. 

\subsection*{Truncated Toeplitz operators}

This section gives a version of Theorem \ref{2.13A} in the special case where the generalized Toeplitz operator becomes a truncated Toeplitz operator. 
For an inner function $\alpha$, we let $P_{\alpha}$ denote the orthogonal projection of $L^2$ onto $\K_{\alpha}$. 
For $\phi \in L^{\infty}$ and inner functions $\alpha$ and $\gamma$, let 
\begin{equation}\label{ATTO999}
A_{\phi}^{\gamma, \alpha}: \K_{\gamma} \to \K_{\alpha}, \quad A^{\gamma, \alpha}_{\phi} f := P_{\alpha}(\phi f)
\end{equation}
 be the {\em asymmetric truncated Toeplitz operator} from $\K_{\gamma}$ to $\K_{\alpha}$ with symbol $\phi$  \cite{MR3708535, MR3634513}. When $\gamma = \alpha$ we write $A^{\gamma, \gamma}_{\phi} = A^{\gamma}_{\phi}$, which is called the {\em truncated Toeplitz operator}  on $\K_{\gamma}$ with symbol $\phi$ \cite{MR2363975} .  Under the right circumstances, one can define $A^{\gamma, \alpha}_{\phi}$ when $\phi \in L^2$ (define it densely on bounded functions in $\K_{\gamma}$ and consider if it has a bounded extension to $\K_{\gamma}$). Surprisingly, not every one of these operators can be represented by an $L^{\infty}$ symbol $\phi$ \cite{MR2679022}. However, since we will be relating equivalence of symbols with equivalence of these operators, we will focus our attention solely on {\em bounded} symbols $\phi$. The symbol $\phi$ for $A^{\gamma}_{\phi}$ is not unique. Indeed  \cite{MR2363975},
\begin{equation}\label{Agammazerooooo}
A^{\gamma}_{\phi} \equiv 0 \iff \phi \in \overline{\gamma} \overline{H^{2}} + \gamma H^{2}.
\end{equation}
In a similar way \cite{MR3592981},  
\begin{equation}\label{Agammazerooooo1}
A^{\gamma, \alpha}_{\phi} \equiv 0 \iff \phi \in \overline{\gamma} \overline{H^{2}} + \alpha H^{2}.
\end{equation}

We now apply our main equivalence theorem (Theorem \ref{2.13A}) to these types of operators. To this end, let 
$\H_1 = \K_{\theta}$, $ \H_2 = \K_{\alpha}$, $\K_1 = \K_{\eta}$, $ \K_{2} = \K_{\gamma},$
where $\theta, \alpha, \eta$, and $\gamma$ are inner functions. Recall from Proposition \ref{2.2A} that for inner functions $\theta_1, \theta_2$ we have that $a \K_{\theta_2} = \K_{\theta_1}$ with $a \in L^{\infty}$ if and only if $a \in \mathscr{G}\!H^{\infty}$ and 
$
\theta_1 = h_{-} \theta_2 a$ and $h_{-} \in \mathscr{G}\!\overline{H^{\infty}}$.
From the identity 
$$1 = \theta_1 \bar{\theta}_1 = h_{-} \theta_2 a \bar{h}_{-} \bar{\theta}_{2} \bar{a} = h_{-} a \bar{h}_{-} \bar{a}$$
we see that $h_{-} \bar{a} = \bar{h}_{-}^{-1} a^{-1} = k \in \C$ with $|k| = 1$ and so $h_{-} = k \bar{a}^{-1}$ and we can assume without loss of generality that 
$$
\K_{\theta_1} = a \K_{\theta_2} \iff \theta_{1} = \bar{a}^{-1} \theta_2 a \quad \mbox{with $a \in \mathscr{G}\!H^{\infty}$}.
$$
With this in mind, a version of Theorem \ref{2.13A} for truncated Toeplitz operators becomes the following. 

\begin{Theorem}\label{6.1A}
Let $\theta, \alpha, \eta, \gamma$ be inner functions such that 
$$
\theta = \bar{a}_{1}^{-1} \eta a_1, \quad \alpha = \bar{a}_{2}^{-1} \gamma a_2 \quad \mbox{with $a_1, a_2 \in \mathscr{G}\!H^{\infty}$}.
$$
Then for any $\phi \in L^{\infty}$, $A_{\phi}^{\theta, \alpha}$ is equivalent to $A_{\widetilde{\phi}}^{\eta, \gamma}$ with $\widetilde{\phi} = \bar{a}_{2} \phi a_1$, and we have 
\begin{equation}\label{6.4}
A_{\phi}^{\theta, \alpha} = A_{\bar{a}_{2}^{-1}}^{\gamma, \alpha} A_{\widetilde{\phi}}^{\eta, \gamma} A_{a_{1}^{-1}}^{\theta, \eta} = A_{\bar{a}_{2}^{-1}}^{\gamma, \alpha} A_{\widetilde{\phi}}^{\eta, \gamma} M_{a_{1}^{-1}}|_{\K_{\theta}}.
\end{equation}
In particular, if $\theta = \alpha$, $\eta = \gamma$, and $a_1 = a_2 = a$, the identity in \eqref{6.4} becomes 
\begin{equation}\label{6.5}
A_{\phi}^{\alpha} = A_{\bar{a}^{-1}}^{\gamma, \alpha} A_{\widetilde{\phi}}^{\gamma} A_{a^{-1}}^{\alpha, \gamma} = A_{\bar{a}^{-1}}^{\gamma, \alpha} A_{\widetilde{\phi}}^{\gamma} M_{a^{-1}}|_{\K_{\alpha}},
\end{equation}
with $\widetilde{\phi} = \phi |a|^2$.
\end{Theorem}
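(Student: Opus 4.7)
My plan is a direct application of Theorem \ref{2.13A} to the four model spaces $\K_\theta, \K_\alpha, \K_\eta, \K_\gamma$, being careful about the direction in which the multipliers act. From the discussion immediately preceding the theorem, the hypothesis $\theta = \bar a_1^{-1}\eta a_1$ with $a_1\in\mathscr{G}\!H^\infty$ is equivalent to $a_1\K_\eta = \K_\theta$, and hence (since $\mathscr{G}\!H^\infty$ is closed under inversion) to $a_1^{-1}\K_\theta = \K_\eta$. Similarly $a_2^{-1}\K_\alpha = \K_\gamma$. This puts us in the setting of Theorem \ref{2.13A} with the identifications $\K_1 := \K_\theta$, $\H_1 := \K_\eta$, $\K_2 := \K_\alpha$, $\H_2 := \K_\gamma$, provided the roles of ``$a_1$'' and ``$a_2$'' in that theorem are played by our $a_1^{-1}$ and $a_2^{-1}$.

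After this substitution, the symbol relation in Theorem \ref{2.13A} reads $\phi = \bar a_2^{-1}\psi a_1^{-1}$, which is equivalent to $\psi = \bar a_2\phi a_1 = \widetilde\phi$. The conclusion of the theorem then becomes
$$T_\phi^{\K_\theta,\K_\alpha} = T_{\bar a_2^{-1}}^{\K_\gamma,\K_\alpha}\,T_{\widetilde\phi}^{\K_\eta,\K_\gamma}\,T_{a_1^{-1}}^{\K_\theta,\K_\eta},$$
which, rewritten in the asymmetric truncated Toeplitz notation \eqref{ATTO999}, gives the first equality in \eqref{6.4}. The outer factors $E = A_{\bar a_2^{-1}}^{\gamma,\alpha}$ and $F = A_{a_1^{-1}}^{\theta,\eta}$ are invertible by Proposition \ref{2.10AA} (applied to $a_1^{-1},a_2^{-1}\in\mathscr{G}\!H^\infty\subset\mathscr{G}\!L^\infty$), which is what upgrades the identity to an equivalence in the sense of \eqref{endnfddnfndffff}.

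For the alternative form $F = M_{a_1^{-1}}|_{\K_\theta}$, I use that $a_1^{-1}\K_\theta = \K_\eta$ forces $P_\eta$ to act as the identity on $a_1^{-1}\K_\theta$, so $A_{a_1^{-1}}^{\theta,\eta} = P_\eta M_{a_1^{-1}}|_{\K_\theta} = M_{a_1^{-1}}|_{\K_\theta}$, mirroring the first identity of \eqref{2.2}. The special case \eqref{6.5} is then immediate: setting $\theta=\alpha$, $\eta=\gamma$, $a_1=a_2=a$ collapses \eqref{6.4} to the claimed expression with $\widetilde\phi = \bar a\phi a = |a|^2\phi$. I do not anticipate any substantial obstacle; the only subtlety is the swap of $a_i$ and $a_i^{-1}$ between the hypotheses of Theorem \ref{6.1A} (phrased in terms of inner-function identities) and those of Theorem \ref{2.13A} (phrased in terms of multiplier identities), and this swap is forced automatically once Proposition \ref{2.2A} has been invoked to rewrite the hypotheses in the ``$a\K = \H$'' form.
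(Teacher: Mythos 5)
Your proposal is correct and takes essentially the same route as the paper: Theorem \ref{6.1A} is presented there as a direct specialization of Theorem \ref{2.13A} to the model spaces $\K_\theta,\K_\alpha,\K_\eta,\K_\gamma$, using the preceding observation that $\theta=\bar a_1^{-1}\eta a_1$ with $a_1\in\mathscr{G}\!H^\infty$ is the same as $a_1\K_\eta=\K_\theta$. Your bookkeeping of the $a_i\leftrightarrow a_i^{-1}$ swap (equivalently, inverting the outer factors $E$ and $F$ via Proposition \ref{2.10AA}) supplies exactly the details the paper leaves implicit.
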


We will have more to say about truncated Toeplitz operators in \S \ref{TTO6} and we will give another reformulation of Theorem \ref{2.13A} involving dual truncated Toeplitz operators in sections \S \ref{DTTO07}.

\section{Multipliers and generalized Toeplitz kernels and ranges}\label{sectionthree}

In this section, we focus on how the equivalence of generalized Toeplitz operators, induced by multipliers, relates to the kernels and ranges of these operators. 

Let us keep the notation from before: $\H, \H_1, \H_2, \K, \K_1, \K_2$ are closed subspaces of $L^2$ and $\phi, \psi \in L^{\infty}$. One of the main motivations for studying equivalence of operators of the form \eqref{9aiuerhgjfkdvc}, with symbols $\phi$ and $\psi$ related by a factorization $\phi = \bar{a}_2 \psi a_1$ as in Theorem \ref{2.13A}, stems from the study of Toeplitz operators and singular integral equations. In fact, the characterization of Toeplitz kernels and ranges, invertibility, and Fredholm properties is strongly related to the existence of an appropriate factorization of their symbols (for instance a Wiener--Hopf factorization or an almost periodic factorization). This allows us to establish an equivalence with Toeplitz operators with monomial symbols or simple exponential symbols -- which are much easier to understand.

Note that when $\phi \in L^{\infty}$ with $|\phi| = 1$ on $\T$, we have 
\begin{equation}\label{3.1}
\ker T_{\phi} \not = \{0\} \iff \phi = \bar{z} \bar{I} \bar{F} F^{-1},
\end{equation}
where $I$ is an inner function and $F \in H^2$ is outer \cite{MR3806717, MR2215727}. If $F \in \mathscr{G}\!H^{\infty}$, then for $\theta = z I$ we have that $\phi = \bar{F} \bar{\theta} F^{-1}$ and, by Theorem \ref{2.13A}, $T_{\phi}$ is equivalent to $T_{\bar{\theta}}$, with 
$$\ker T_{\phi} = F \ker T_{\bar{\theta}} = F \K_{\theta}.$$
This is true because if $g_{+} \in \mathscr{G}\!H^{\infty}$ and $g_{-} \in \mathscr{G}\!\overline{H^{\infty}}$, then for any $\widetilde{\phi} \in L^{\infty}$, 
\begin{equation}\label{3.2}
\ker T_{\widetilde{\phi} g_{+}} = g_{+}^{-1} \ker T_{\widetilde{\phi}} \quad \mbox{and} \quad \ker T_{\widetilde{\phi} g_{-}} = \ker T_{\widetilde{\phi}}.
\end{equation}
One may ask about the relationship between the kernels of two Toeplitz operators when the symbol  of one of them is multiplied by another function in $\mathscr{G}\!L^{\infty}$ which is not necessarily in $\mathscr{G}\!H^{\infty}$ or $\mathscr{G}\!\overline{H^{\infty}}$. If that function is a non-constant inner function, which also belongs to $H^{\infty} \setminus \mathscr{G}\!H^{\infty}$, we have that 
\begin{equation}\label{3.3}
\ker T_{\widetilde{\phi} \theta} \subsetneq \ker T_{\widetilde{\phi}} \quad \mbox{and} \quad \theta \ker T_{\widetilde{\phi} \theta} \subsetneq \ker T_{\widetilde{\phi}}.
\end{equation}

If we now consider Hankel operators instead, we see that with the same notation, if $g_{+} \in \mathscr{G}\!H^{\infty}$ and $\theta$ is inner,
\begin{equation}\label{3.4}
\ker H_{\widetilde{\phi} g_{+}} = \ker H_{\widetilde{\phi}},
\end{equation}
\begin{equation}\label{3.5}
\ker H_{\widetilde{\phi} \theta} \supseteq \ker H_{\widetilde{\phi}}   \quad \mbox{and} \quad \theta \ker H_{\widetilde{\phi} \theta} \subset \ker H_{\widetilde{\phi}},
\end{equation}
while no simple relation corresponds to the second equality in \eqref{3.2} for Hankel operators. 

In this section we show that by interpreting the relations \eqref{3.2} through \eqref{3.5} in terms of multipliers, we can put them in a general context and, along the way, get a better understanding of them. 

If two Hilbert space operators $A$ and $B$ are equivalent via \eqref{endnfddnfndffff}, with $A = E B F$, where $E$ and $F$ are invertible operators, it is clear that their kernels and ranges are isomorphic with 
$
\ker A = F^{-1} \ker B$ and $\operatorname{Ran} A = E \operatorname{Ran} B.$
More generally we have the following. 

\begin{Proposition}\label{3.1A}
Let $A \in \mathscr{B}(\K_1, \K_2)$ and $B \in \mathscr{B}(\H_1, \H_2)$  with $A = E B F$, where $E \in \mathscr{B}(\H_2, \K_2)$ and $F \in \mathscr{B}(\K_1, \H_1)$. 
\begin{enumerate}
\item If $E$ is invertible, then 
$F \ker A \subset \ker B$ and $ E \operatorname{Ran} B \supseteq \operatorname{Ran} A.$
\item If $F$ is invertible, then 
$\ker B \subset F \ker A$ and $E \operatorname{Ran} B \subset \operatorname{Ran} A.$
\end{enumerate}
\end{Proposition}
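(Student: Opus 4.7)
The plan is to prove both parts by direct element-chasing through the factorization $A = EBF$, using the respective invertibility hypotheses exactly where they are needed.

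For part (a), suppose $E$ is invertible. To obtain $F \ker A \subset \ker B$, I would pick $k \in \ker A$ and note that $E(BFk) = EBFk = Ak = 0$; since $E$ is invertible, this forces $BFk = 0$, i.e., $Fk \in \ker B$. For the range inclusion $\operatorname{Ran} A \subset E \operatorname{Ran} B$, no invertibility is actually needed: for any $k \in \K_1$, $Ak = E(B(Fk)) \in E \operatorname{Ran} B$, since $Fk \in \H_1$ and hence $B(Fk) \in \operatorname{Ran} B$. I would mention in passing that the hypothesis on $E$ is retained only for symmetry with part (b).

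For part (b), suppose $F$ is invertible. Given $h \in \ker B$, the invertibility of $F$ lets me form $F^{-1} h \in \K_1$ and compute
\[
A(F^{-1} h) = EBF(F^{-1} h) = EBh = 0,
\]
so $F^{-1} h \in \ker A$ and thus $h \in F \ker A$, giving $\ker B \subset F \ker A$. For the range inclusion $E \operatorname{Ran} B \subset \operatorname{Ran} A$, I would take $h \in \H_1$, write $h = F(F^{-1} h)$, and observe that
\[
EBh = EBF(F^{-1} h) = A(F^{-1} h) \in \operatorname{Ran} A.
\]

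There is no real obstacle here; the argument is a two-line chase in each direction, and the only thing to keep track of is which invertibility is doing the work in each inclusion. It is worth remarking that combining (a) and (b) when both $E$ and $F$ are invertible recovers the equalities $\ker A = F^{-1} \ker B$ and $\operatorname{Ran} A = E \operatorname{Ran} B$ already noted in the paragraph preceding the proposition.
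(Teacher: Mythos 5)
Your proof is correct and follows essentially the same element-chasing argument as the paper. Your side remark that the range inclusion $\operatorname{Ran} A \subset E\operatorname{Ran} B$ in part (a) needs no invertibility of $E$ is accurate (the paper's own proof invokes $E^{-1}$ there, but as you observe, $Ak = E(BFk)$ already suffices).
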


\begin{proof}
For the proof of (a), note that 
$$
A f = 0  \iff E B F f = 0 
 \iff B F f = 0
 \implies F f \in \ker B.
$$
Moreover, 
$$
A f  = g  \iff E B F f = g
 \iff B F f = E^{-1} g
 \iff E^{-1} g \in \operatorname{Ran} B.
$$
For the proof of (b), note that 
\begin{align*}
B f = 0 &  \iff B F (F^{-1} f) = 0\\
& \implies E B F(F^{-1} f) = 0\\
&  \iff A(F^{-1} f) = 0\\
 & \implies F^{-1} f \in \ker A.
\end{align*}
Moreover, 
$
B f  = g  \implies E B f = E g
 \iff E B F (F^{-1} f) = E g$
\end{proof}

Concerning the case where the operators $A$ and $B$ are generalized Toeplitz operators, one can take into account Theorem \ref{2.8A}, Remark \ref{2.13B}, and Proposition \ref{3.1A} to formulate the following.

\begin{Proposition}\label{3.2A}
Let $a_1, a_2 \in L^{\infty}$ with $a_1 \K_1 \subset \H_1$, $a_2 \K_2 \subset \H_2$, and let $\phi = \bar{a}_2 \psi a_1$. 
\begin{enumerate}
\item If $a_2 \in \mathscr{G}\!L^{\infty}$ and $a_2 \K_2 = \H_2$, then
$$
 a_1 \ker T_{\phi}^{\K_1, \K_2} \subset \ker T_{\psi}^{\H_1, \H_2}
\quad \mbox{and} \quad 
T_{\bar{a}_2}^{\H_2, \K_2} \operatorname{Ran}(T_{\psi}^{\H_1, \H_2}) \supseteq \operatorname{Ran}(T_{\phi}^{\K_1, \K_2}).
$$
\item If $a_1 \in \mathscr{G}\!L^{\infty}$ and $a_1 \K_1 = \H_1$, then
$$
\ker T_{\psi}^{\H_1, \H_2} \subset a_1 \ker_{\phi}^{\K_1, \K_2}
\quad \mbox{and} \quad 
T_{\bar{a}_2}^{\H_2, \K_2} \operatorname{Ran}(T_{\psi}^{\H_1, \H_2}) \subset \operatorname{Ran}(T_{\phi}^{\K_1, \K_2}).
$$
\end{enumerate}
\end{Proposition}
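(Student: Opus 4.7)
The plan is to combine Theorem~\ref{2.8A}, Remark~\ref{2.13B}, Proposition~\ref{2.10AA}, and Proposition~\ref{3.1A} in a direct way; no new ingredients are needed, as the proposition is essentially a bookkeeping reformulation of the equivalence machinery already set up. The first step is to establish, under the blanket hypotheses $a_1\K_1\subset\H_1$ and $a_2\K_2\subset\H_2$ alone, the factorization
\[
T_\phi^{\K_1,\K_2} \;=\; E\,T_\psi^{\H_1,\H_2}\,F, \qquad E=T_{\bar a_2}^{\H_2,\K_2},\quad F=T_{a_1}^{\K_1,\H_1}=M_{a_1}|_{\K_1},
\]
which is precisely the content of Remark~\ref{2.13B}. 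I would derive it by applying Theorem~\ref{2.8A} twice: first to the splitting $\phi=\bar a_2\cdot(\psi a_1)$, using that $a_2\K_2\subset\H_2$ is equivalent to $\bar a_2\H_2^\perp\subset\K_2^\perp$ by Proposition~\ref{2.5A} (this is the first alternative in Theorem~\ref{2.8A}); and then to the splitting $\psi a_1=\psi\cdot a_1$, using the second alternative $a_1\K_1\subset\H_1$ directly.

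For part~(a), the extra assumption $a_2\K_2=\H_2$ combined with Proposition~\ref{2.10AA} gives that $T_{a_2}^{\K_2,\H_2}$ is invertible, hence so is its adjoint $E=T_{\bar a_2}^{\H_2,\K_2}$. The factorization above then matches the hypotheses of Proposition~\ref{3.1A}(a) with $A=T_\phi^{\K_1,\K_2}$ and $B=T_\psi^{\H_1,\H_2}$, yielding $F\ker A\subset\ker B$ and $E\operatorname{Ran}B\supseteq\operatorname{Ran}A$. Because $a_1\K_1\subset\H_1$, the operator $F=T_{a_1}^{\K_1,\H_1}$ is just $M_{a_1}|_{\K_1}$, so $F\ker T_\phi^{\K_1,\K_2}=a_1\ker T_\phi^{\K_1,\K_2}$, which is exactly the form stated. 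Part~(b) is symmetric: the hypothesis $a_1\K_1=\H_1$ makes $F=M_{a_1}|_{\K_1}$ invertible by Proposition~\ref{2.10AA}, and Proposition~\ref{3.1A}(b) delivers the reverse inclusions.

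There is no real obstacle here; the only care required is to keep track of which of $E$, $F$ is invertible under each of the hypotheses in (a) and (b), and to recognise that invertibility of $T_{\bar a_2}^{\H_2,\K_2}$ under $a_2\K_2=\H_2$ is obtained by duality from Proposition~\ref{2.10AA}. I would present the proof in a few lines that merely point to these references rather than reproducing any kernel--range manipulations, since everything has already been done in Proposition~\ref{3.1A}.
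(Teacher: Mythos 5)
Your proposal is correct and follows exactly the route the paper intends: the paper gives no separate proof of Proposition \ref{3.2A}, stating only that it follows from Theorem \ref{2.8A}, Remark \ref{2.13B}, and Proposition \ref{3.1A}, which is precisely the combination you carry out (including the correct use of Proposition \ref{2.5A} to verify the hypothesis of Theorem \ref{2.8A} and of Proposition \ref{2.10AA} to get invertibility of $E$ or $F$ in each case).
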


Some immediate consequences of Proposition \ref{3.2A} now follow. 

\begin{Corollary}\label{3.5A}
\begin{enumerate}
Let $a_1, a_2 \in L^{\infty}$. 
\item If $a_1 \K_1 \subset \H_1$ we have 
\begin{equation}\label{3.13}
a_1 \ker T_{\psi a_1}^{\K_1, \H_2} \subset \ker T_{\psi}^{\H_1, \H_2} \; \; \mbox{and} \; \; 
\operatorname{Ran}(T_{\psi}^{\H_1, \H_2}) \supseteq \operatorname{Ran}(T_{\psi a_1}^{\K_1, \H_2}).
\end{equation}
\item If $a_2 \K_2 \subset \H_2$ we have 
\begin{equation}\label{3.15}
\ker T_{\psi}^{\H_1, \H_2} \subset \ker T_{\bar{a}_2 \psi}^{\H_1, \K_2} \; \;  \mbox{and} \; \;  
T_{\bar{a}_2}^{\H_2, \K_2} \operatorname{Ran}(T_{\psi}^{\H_1, \H_2}) \subset \operatorname{Ran}(T_{\bar{a}_2 \psi}^{\H_1, \K_2}).
\end{equation}
\item If $a_1, a_2 \in \mathscr{G}\!L^{\infty}$ with $a_1 \K_1 = \H_1$, $a_2 \K_2 = \H_2$, and $\phi = \bar{a}_2 \psi a_1$, then
\begin{equation}\label{3.17}
a_1 \ker T_{\phi}^{\K_1, \K_2} = \ker T_{\psi}^{\H_1, \H_2} \;  \mbox{and} \; 
\operatorname{Ran}(T_{\phi}^{\K_1, \K_2}) = T_{\bar{a}_2}^{\H_2, \K_2} \operatorname{Ran}(T_{\psi}^{\H_1, \H_2}).
\end{equation}
\end{enumerate}
\end{Corollary}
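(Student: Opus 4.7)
The plan is to derive all three parts of Corollary \ref{3.5A} by reducing them to the intertwining formula in Theorem \ref{2.8A} (for (a) and (b)) and to Proposition \ref{3.2A} (for (c)); no new ingredients are needed beyond a careful bookkeeping of intermediate spaces.

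For part (a), I would start from the hypothesis $a_1 \K_1 \subset \H_1$ and apply Theorem \ref{2.8A} with intermediate space $\H_1$ to the factorization $\psi a_1 = \psi \cdot a_1$, obtaining $T_{\psi a_1}^{\K_1, \H_2} = T_{\psi}^{\H_1, \H_2}\, T_{a_1}^{\K_1, \H_1}$, and note that $T_{a_1}^{\K_1, \H_1} = M_{a_1}|_{\K_1}$ since $a_1 \K_1 \subset \H_1$. The kernel inclusion is then immediate: if $f \in \ker T_{\psi a_1}^{\K_1, \H_2}$, then $T_{\psi}^{\H_1, \H_2}(a_1 f) = 0$, so $a_1 f \in \ker T_{\psi}^{\H_1, \H_2}$. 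The range inclusion follows because every element of $\operatorname{Ran}(T_{\psi a_1}^{\K_1, \H_2})$ is of the form $T_{\psi}^{\H_1, \H_2}(a_1 f)$ with $a_1 f \in \H_1$.

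For part (b), the hypothesis $a_2 \K_2 \subset \H_2$ is, by Proposition \ref{2.5A}(a), equivalent to $\bar{a}_2 \H_2^{\perp} \subset \K_2^{\perp}$. This is exactly the first alternative in Theorem \ref{2.8A} applied to the factorization $\bar{a}_2 \psi = \bar{a}_2 \cdot \psi$ with intermediate space $\H_2$, yielding $T_{\bar{a}_2 \psi}^{\H_1, \K_2} = T_{\bar{a}_2}^{\H_2, \K_2}\, T_{\psi}^{\H_1, \H_2}$. The two conclusions of (b) are then immediate: $f \in \ker T_{\psi}^{\H_1, \H_2}$ implies $T_{\bar{a}_2 \psi}^{\H_1, \K_2} f = 0$, and any $T_{\bar{a}_2}^{\H_2, \K_2} T_{\psi}^{\H_1, \H_2} f$ is an element of $\operatorname{Ran}(T_{\bar{a}_2 \psi}^{\H_1, \K_2})$.

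For part (c), with $a_1, a_2 \in \mathscr{G}\!L^{\infty}$ and the two equalities $a_1 \K_1 = \H_1$, $a_2 \K_2 = \H_2$, both hypotheses of Proposition \ref{3.2A}(a) and \ref{3.2A}(b) are simultaneously satisfied. Combining the two inclusions $a_1 \ker T_{\phi}^{\K_1, \K_2} \subset \ker T_{\psi}^{\H_1, \H_2}$ and $\ker T_{\psi}^{\H_1, \H_2} \subset a_1 \ker T_{\phi}^{\K_1, \K_2}$ gives the kernel equality; combining the two range inclusions gives the range equality.

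I do not expect any real obstacle: each item reduces to identifying the correct intermediate subspace in Theorem \ref{2.8A} so that one of its two hypotheses holds. The only mildly subtle point is in part (b), where the translation between the multiplier condition on $a_2$ and the adjoint-multiplier condition on $\bar{a}_2$ via Proposition \ref{2.5A} must be invoked explicitly before Theorem \ref{2.8A} can be applied.
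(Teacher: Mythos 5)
Your proposal is correct and follows essentially the same route as the paper: the paper presents all three parts as immediate specializations of Proposition \ref{3.2A} (itself built on Theorem \ref{2.8A} and Proposition \ref{3.1A}), and your direct use of Theorem \ref{2.8A} for parts (a) and (b) just unwinds those specializations (taking $a_2=1$, resp.\ $a_1=1$), while part (c) is obtained exactly as the paper intends by intersecting the two inclusions of Proposition \ref{3.2A}. The bookkeeping of intermediate spaces and the use of Proposition \ref{2.5A} in part (b) are both correct.
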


\begin{Corollary}\label{3.7a}
If $a_1 \K_1 = \H_1$, then for any $\H_2$
\begin{equation}\label{3.10a}
a_{1}^{-1} \ker T_{\psi}^{\H_1, \H_2} = \ker T_{\psi a_{1}}^{\K_1, \H_2}.
\end{equation}
If $a_2 \K_2 = \H_2$, then for any $\H_1$, 
\begin{equation}\label{3.10b}
\ker T_{\psi}^{\H_1, \H_2} = \ker T_{\bar{a}_2 \psi}^{\H_1, \K_2}.
\end{equation}
\end{Corollary}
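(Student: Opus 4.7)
My plan is to derive both statements from the factorization results of Section 2, in particular Theorem \ref{2.8A} (the Brown--Halmos style factorization for generalized Toeplitz operators) combined with the invertibility established in Proposition \ref{2.10AA}. Each equality is the ``trivial'' case of Corollary \ref{3.5A}(c) where one of the two multipliers is taken to be the constant function $1$, so the strategy is essentially to specialize that result and then rewrite it.

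For the first equality, the plan is to use Theorem \ref{2.8A} to factor the operator $T_{\psi a_1}^{\K_1, \H_2}$. Since the hypothesis $a_1 \K_1 = \H_1$ gives in particular $a_1 \K_1 \subset \H_1$, the second condition of Theorem \ref{2.8A} applies with the intermediate space $\H_1$, so
\[
T_{\psi a_1}^{\K_1, \H_2} = T_\psi^{\H_1, \H_2}\, T_{a_1}^{\K_1, \H_1}.
\]
By Proposition \ref{2.10AA}, the factor $T_{a_1}^{\K_1, \H_1} = M_{a_1}|_{\K_1}$ is invertible with inverse $T_{a_1^{-1}}^{\H_1, \K_1} = M_{a_1^{-1}}|_{\H_1}$. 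Taking kernels of both sides of the factorization, invertibility of $M_{a_1}|_{\K_1}$ gives
\[
\ker T_{\psi a_1}^{\K_1, \H_2} = M_{a_1^{-1}} \ker T_\psi^{\H_1, \H_2} = a_1^{-1} \ker T_\psi^{\H_1, \H_2},
\]
as desired.

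For the second equality, the roles are dual. The hypothesis $a_2 \K_2 = \H_2$ gives, by Proposition \ref{2.5A}, $\bar a_2 \H_2^\perp \subset \K_2^\perp$, which is precisely the first alternative condition in Theorem \ref{2.8A}. Applying that theorem with intermediate space $\H_2$ yields
\[
T_{\bar a_2 \psi}^{\H_1, \K_2} = T_{\bar a_2}^{\H_2, \K_2}\, T_\psi^{\H_1, \H_2}.
\]
By Proposition \ref{2.10AA}, the left factor $T_{\bar a_2}^{\H_2, \K_2}$ is invertible (with inverse $T_{\bar a_2^{-1}}^{\K_2, \H_2}$), so taking kernels preserves them:
\[
\ker T_{\bar a_2 \psi}^{\H_1, \K_2} = \ker T_\psi^{\H_1, \H_2}.
\]

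There really isn't a main obstacle here: once one spots that the two equalities are exactly the two ``one-sided'' specializations of the factorizations set up in Section 2, the rest is bookkeeping. The only point that requires a tiny amount of care is verifying the correct alternative hypothesis in Theorem \ref{2.8A} for each of the two statements---multiplier on the domain for the first, and multiplier on the codomain (dualized via Proposition \ref{2.5A}) for the second.
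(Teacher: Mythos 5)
Your proof is correct and follows essentially the same route as the paper: the paper presents this corollary as the specialization of Corollary \ref{3.5A}(c) (equivalently Proposition \ref{3.2A}) with one of the two multipliers equal to $1$, and your direct derivation simply unwinds the same ingredients --- the factorization of Theorem \ref{2.8A} (with the correct one-sided hypothesis in each case, the second via Proposition \ref{2.5A}) together with the invertibility from Proposition \ref{2.10AA}. Both applications of the kernel identities $\ker(BC)=C^{-1}\ker B$ and $\ker(EB)=\ker B$ for invertible $C$ and $E$ are exactly Proposition \ref{3.1A} in this setting, so nothing new is needed.
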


\begin{Remark}\label{3.6A}
\begin{enumerate}
\item For Toeplitz operators on $H^2$, the equalities from \eqref{3.2} can be seen as particular cases of \eqref{3.10a} and \eqref{3.10b}, while the inclusion in \eqref{3.3} can be see as a particular case of \eqref{3.15} with $a_2 = \theta$ and $\psi = \widetilde{\phi} \theta$. 

\item For Hankel operators, the equality \eqref{3.4} is a particular case of \eqref{3.10b}, while the first inclusion in \eqref{3.5} follows from \eqref{3.15} with $a_2 = \bar{\theta}$ and $\psi = \widetilde{\phi}$ and the second inclusion follows from \eqref{3.13}.


\item From the previous results we see that if $a_1$ is a multiplier from $\K_1$ into (onto) $\H_1$, then $a_1$ is also a multiplier from $\ker T_{\bar{a}_2 \psi a_1}^{\K_1, \K_2}$ into (onto) $\ker T_{\psi}^{\H_1, \H_2}$ if $a_2 \in \mathscr{G}\!L^{\infty}$, $a_2 \K_2 = \H_2$. In particular, $a_1$ is a multiplier from $\ker T_{\psi a_1}^{\K_1, \H_2}$ into (onto) $\ker  T_{\psi}^{\H_1, \H_2}$. 
\end{enumerate}
\end{Remark}

For Toeplitz operators on $H^2$, if the first equality in \eqref{3.17} holds for some $a_2 \in \mathscr{G}\!H^{\infty}$, then we must have $\phi = \bar{a}_2 \psi a_1$ for some $a_2 \in \mathscr{G}\!H^{\infty}$, i.e., 
$$\ker T_{\phi} = a_{1}^{-1} \ker T_{\psi} \implies \phi = \bar{a}_2 \psi a_1 \quad \mbox{for some $a_2 \in \mathscr{G}\!H^{\infty}$}$$
by \cite[Prop.~2.16]{MR3806717} (because in this case we have $\ker T_{ \phi} = \ker T_{a_1 \psi}$). It is therefore natural to ask if, for generalized Toeplitz operators, some sort of converse to Corollary \ref{3.5A}(c) is true. Namely, if \eqref{3.17} holds for some $a_1 \in \mathscr{G}\!L^{\infty}$ with $a_1 \K_1 = \H_1$, is it the case that the symbols $\phi$ and $\psi $ are related by $\phi = \bar{a}_2 \psi a_1$ for some $a_2 \in \mathscr{G}\!L^{\infty}$ with $a_2 \K_2 = \H_2$? This is not quite true. However, from Corollary \ref{3.7a}, we do have the following. 

\begin{Proposition}\label{3,8A}
Let $a_1 \in \mathscr{G}\!L^{\infty}$ with $a_1 \K_1 = \H_1$. If
$$\ker T_{\phi}^{\K_1, \K_2} = a_{1}^{-1} \ker T_{\psi}^{\H_1, \H_2},$$
then there exists a $\widetilde{\phi} \in L^{\infty}$ such that 
$$\ker T_{\phi}^{\K_1, \K_2} = \ker T_{\widetilde{\phi}}^{\K_1, \K_2}$$
and $\widetilde{\phi}  = \bar{a}_{2} \psi a_1$ for any $a_2$ such that $a_2 \H_2 = \K_2$.
\end{Proposition}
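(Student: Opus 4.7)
The plan is to derive the result as a direct two-step application of Corollary~\ref{3.7a}, using its two complementary identities \eqref{3.10a} and \eqref{3.10b}.

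First I would rewrite the hypothesis using \eqref{3.10a}. Since $a_1 \in \mathscr{G}\!L^{\infty}$ with $a_1 \K_1 = \H_1$, that identity gives
$$a_1^{-1} \ker T_\psi^{\H_1,\H_2} = \ker T_{\psi a_1}^{\K_1, \H_2},$$
so the hypothesis transforms into
$$\ker T_\phi^{\K_1,\K_2} = \ker T_{\psi a_1}^{\K_1, \H_2}.$$
This already trades the ``twisted'' subspace $a_1^{-1}\ker T_\psi^{\H_1,\H_2}$ for a genuine generalized Toeplitz kernel on $\K_1$, and the only obstruction to matching $\ker T_\phi^{\K_1,\K_2}$ with it on the nose is the mismatched codomain $\H_2$ versus $\K_2$.

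Next I would correct that codomain using \eqref{3.10b}. Fix any $a_2 \in \mathscr{G}\!L^{\infty}$ with $a_2 \K_2 = \H_2$ and apply \eqref{3.10b} with $\H_1$ replaced by $\K_1$ and $\psi$ replaced by $\psi a_1$, obtaining
$$\ker T_{\psi a_1}^{\K_1, \H_2} = \ker T_{\bar a_2 \psi a_1}^{\K_1, \K_2}.$$
Chaining this with the previous display and setting $\widetilde\phi := \bar a_2 \psi a_1 \in L^{\infty}$ yields the claim $\ker T_\phi^{\K_1,\K_2} = \ker T_{\widetilde\phi}^{\K_1,\K_2}$.

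There is no serious obstacle here; the argument is essentially bookkeeping once Corollary~\ref{3.7a} is in hand. The conceptual point worth emphasizing --- and the reason this is only a ``sort of converse'' to Corollary~\ref{3.5A}(c) --- is that the non-uniqueness of symbols for generalized Toeplitz operators prevents the recovery of $\phi = \bar a_2 \psi a_1$ itself; one can only assert that $\phi$ and $\bar a_2 \psi a_1$ give the \emph{same} generalized Toeplitz kernel. The ``for any $a_2$'' clause is then natural: every admissible multiplier $a_2$ yields its own $\widetilde\phi$, and although these symbols differ, the argument shows that they all produce the kernel $\ker T_\phi^{\K_1,\K_2}$.
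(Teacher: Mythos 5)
Your argument is exactly the paper's: the paper offers no separate proof of Proposition \ref{3,8A}, saying only that it follows from Corollary \ref{3.7a}, and your two-step application of \eqref{3.10a} (to absorb $a_1$ and land on $\K_1$) followed by \eqref{3.10b} (to fix the codomain) is precisely that derivation. The one point worth recording is that you (correctly) assume $a_2\K_2=\H_2$, which is the direction required by \eqref{3.10b} and consistent with Theorem \ref{2.13A}, whereas the statement as printed reads $a_2\H_2=\K_2$; with the printed direction the symbol would have to be $\bar a_2^{-1}\psi a_1$ instead of $\bar a_2\psi a_1$, so the statement contains a typo that your reading silently corrects.
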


Note how this gives somewhat of a converse to Theorem \ref{2.13A}.

\section{Complex selfadjointness}\label{ALunitaryhhh}

For a closed subspace $\K$ of $L^2$, consider a mapping $C_{\K}: \K \to \K$ for which 
$C_{\K}(f + a g) = C_{\K} f + \bar{a} C_{\K} g$ for all $f, g, \in \K$ and $a \in \C$. Such a mapping is {\em antilinear}. For an antilinear mapping $C_{\K}$, one can define an antilinear adjoint $C_{\K}^{*}: \K \to \K$ which satisfies 
$\langle C_{\K} f, g\rangle = \overline{\langle f, C^{*}_{\K} g\rangle}$ for all $f, g \in \K$. The mapping $C_{\K}$ is called an {\em antilinear unitary mapping} if $C_{\K}^{*} = C_{\K}^{-1}$. We will give some examples in a moment.

\begin{Remark}\label{,,xmmx}
In many settings, one imposes the additional condition that $C_{\K}^2 = I_{\operatorname{d}}$ and such {\em involutive} and antilinear unitary mappings are called {\em conjugations}. Many of the $C_{\K}$ we will present below will be involutive. However, this extra hypothesis is not necessary for our results and so we leave it out. The version without the involutive assumption was explored in \cite{CSA} while the version with the involutive assumption was explored in \cite{MR3254868, MR2187654, MR2302518}. 
\end{Remark}

Let $C_{\K}$ denote an antilinear unitary mapping on $\K$. For $a \in \mathscr{G} L^{\infty}$ with $a \K = \H$,  this next result shows that, under certain conditions, the operators 
$$F = M_{a}|_{\K} \quad \mbox{and} \quad E = F^{*} = P_{\K} \bar{a} P_{\H}|_{\H}$$
from Proposition \ref{2.10AA}  induce a corresponding antilinear unitary mapping $C_{\H}$ on $\H$. 

\begin{Proposition}\label{4.2N}
Let $\H$ and $\K$ be closed subspaces of $L^2$ and $C_{\K}$ be an antilinear unitary mapping on $\K$, $F$ be an invertible operator from $\H$ onto $\K$, and $E = F^{*}$. Then for 
\begin{equation}\label{5554448uuUU}
C_{\H} = F^{-1} C_{\K} F,
\end{equation} 
the following are equivalent: 
\begin{enumerate}
\item $C_{\H}$ is an antilinear unitary mapping on $\H$; 
\item $C_{\K} (F E) = (F E) C_{\K}$.
\item $C_{\H} = E C_{\K} E^{-1}$. 
\end{enumerate}
\end{Proposition}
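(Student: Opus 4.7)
The map $C_{\H} = F^{-1} C_{\K} F$ is automatically antilinear, being the composition of a linear, an antilinear, and a linear map. Thus condition (a) reduces to the single identity $C_{\H}^{*} = C_{\H}^{-1}$, and my plan is to compute $C_{\H}^{-1}$ and $C_{\H}^{*}$ separately in closed form and then read off (b) and (c) by elementary algebra.

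First, a direct calculation gives $C_{\H}^{-1} = F^{-1} C_{\K}^{-1} F$, since composing this with $C_{\H} = F^{-1} C_{\K} F$ collapses in the middle to $C_{\K} C_{\K}^{-1} = I_{\K}$. Next, to compute the antilinear adjoint, I would move the outer linear factor $F^{-1}$ across the pairing,
\[
\langle C_{\H} f, g\rangle_{\H} = \langle C_{\K} F f,\, (F^{-1})^{*} g\rangle_{\K},
\]
then apply the defining identity $\langle C_{\K} u, v\rangle_{\K} = \overline{\langle u, C_{\K}^{*} v\rangle_{\K}}$ followed by the ordinary linear adjoint of $F$, to obtain
\[
\langle C_{\H} f, g\rangle_{\H} = \overline{\langle f,\, F^{*} C_{\K}^{*} (F^{-1})^{*} g\rangle_{\H}}.
\]
Since $E = F^{*}$ and $(F^{-1})^{*} = (F^{*})^{-1} = E^{-1}$, this reads $C_{\H}^{*} = E C_{\K}^{*} E^{-1}$.

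With these two formulas and the assumption $C_{\K}^{*} = C_{\K}^{-1}$ in hand, condition (a) is the equality $E C_{\K}^{-1} E^{-1} = F^{-1} C_{\K}^{-1} F$, which after multiplying on the left by $F$ and on the right by $E$ becomes $FE\, C_{\K}^{-1} = C_{\K}^{-1}\, FE$; commuting with $C_{\K}^{-1}$ is equivalent to commuting with $C_{\K}$ (since $C_{\K}$ is invertible), which is (b). Condition (c) reads $F^{-1} C_{\K} F = E C_{\K} E^{-1}$, and the same sandwich gives $C_{\K}\, FE = FE\, C_{\K}$, again (b). This settles (a)$\Longleftrightarrow$(b)$\Longleftrightarrow$(c).

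The main obstacle is simply the antilinear bookkeeping: tracking which manipulations introduce a complex conjugate and which do not, and remembering that $(T^{-1})^{*} = (T^{*})^{-1}$ is being used here on the linear factor $F$, while $C_{\K}^{*}$ is the antilinear adjoint of an antilinear map. Once this is done carefully, the three conditions collapse to the single commutation relation in (b).
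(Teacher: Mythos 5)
Your proof is correct and follows essentially the same route as the paper: compute $C_{\H}^{-1} = F^{-1}C_{\K}^{-1}F$ and $C_{\H}^{*} = F^{*}C_{\K}^{*}(F^{-1})^{*} = E\,C_{\K}^{-1}E^{-1}$, then reduce the unitarity condition $C_{\H}^{*}=C_{\H}^{-1}$ and condition (c) to the single commutation relation $(FE)C_{\K}=C_{\K}(FE)$. Your explicit bookkeeping of the antilinear adjoint is a bit more detailed than the paper's (which simply asserts the formula for $C_{\H}^{*}$), but the argument is the same.
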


\begin{proof}
With $C_{\H}$ defined to be $F^{-1} C_{\K} F$, we see that $C_{\H}$ is antilinear and invertible with
$C_{\H}^{*} = F^{*} C^{*}_{\K} (F^{-1})^{*} = F^{*} C_{\K}^{-1} (F^{-1})^{*}$
and $C_{\H}^{-1} = F^{-1} C_{\K}^{-1} F$. Moreover, 
\begin{align*}
C_{\H}^{*} = C_{\H}^{-1} & \iff F^{*} C_{\K}^{-1} (F^{-1})^{*} = F^{-1} C_{\K}^{-1} F\\
& \iff F F^{*} C_{\K}^{-1} (F F^{*})^{-1} = C_{\K}^{-1}\\
& \iff  (F F^{*}) C_{\K} = C_{\K} (F F^{*})\\
& \iff (FE) C_{\K} = C_{\K} (FE)\\
& \iff E C_{\K} E^{-1} = F^{-1} C_{\K} F.  \qedhere
\end{align*}
\end{proof}

Recall from Remark \ref{,,xmmx} that our antilinear isometry $C_{\K}$ is called a conjugation if $C_{\K}^2 = I_{\operatorname{d}}$. Note that a conjugation satisfies $C = C^{-1}$. 

\begin{Proposition}
Let $C_{\H}$ be defined as in \eqref{5554448uuUU}. Then $C_{\H}^2 = I_{\operatorname{d}} \iff C_{\K}^2 = I_{\operatorname{d}}$. 
\end{Proposition}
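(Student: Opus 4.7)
The plan is essentially a one-line computation, since $C_{\H}$ is built from $C_{\K}$ by a conjugation-style formula involving the linear invertible operator $F$. First I would observe that $C_{\H} = F^{-1} C_{\K} F$ is antilinear (being the composition of the linear map $F$, the antilinear map $C_{\K}$, and the linear map $F^{-1}$), so that $C_{\H}^{2}$ is a linear operator on $\H$. This is the only preliminary check needed before the main calculation.

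Next I would directly unfold the square. Using associativity and the fact that $F F^{-1} = I_{\operatorname{d}}$ on $\K$, one has
\begin{equation}
C_{\H}^{2} = (F^{-1} C_{\K} F)(F^{-1} C_{\K} F) = F^{-1} C_{\K} (F F^{-1}) C_{\K} F = F^{-1} C_{\K}^{2} F.
\end{equation}
Here no scalars migrate past $C_{\K}$, so antilinearity plays no role in the cancellation — the identity is purely compositional.

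Finally, since $F : \H \to \K$ is invertible, the equation $C_{\H}^{2} = F^{-1} C_{\K}^{2} F$ can be inverted on both sides: $C_{\H}^{2} = I_{\operatorname{d}}$ on $\H$ if and only if $C_{\K}^{2} = F F^{-1} = I_{\operatorname{d}}$ on $\K$. This gives the equivalence claimed. There is no real obstacle here; the only minor point to flag is the antilinear-versus-linear bookkeeping, ensuring that $C_{\H}^{2}$ genuinely coincides with the linear operator $F^{-1} C_{\K}^{2} F$ rather than with some conjugate-linear variant.
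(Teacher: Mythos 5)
Your proof is correct and is exactly the direct computation the paper has in mind (the paper states this proposition without proof, evidently regarding $C_{\H}^{2}=F^{-1}C_{\K}^{2}F$ as immediate). Your extra remark that the middle cancellation is purely compositional, so antilinearity causes no trouble, is the right point to flag and nothing more is needed.
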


An operator $T$ on $\H$ is {\em complex selfadjoint} with respect to an antilinear isometry $C$ on $\H$, if $C T C^{-1} = T^{*}$ \cite{CSA}.  If the antilinear unitary mapping also satisfies the involutive criterion $C^2 = I_{d}$, then $T$ is called a $C$-{\em symmetric operator}. These types of operators were explored in great detail in \cite{MR3254868, MR2187654, MR2302518}.

As we will see shortly, some generalized Toeplitz operators are complex self adjoint. The main theorem of this section relates the complex self adjointness of two equivalent generalized Toeplitz operators. 

\begin{Theorem}\label{comcpclcicic}
For closed subspaces $\H$ and $\K$ of $L^2$ and $\phi, \psi \in L^{\infty}$, suppose that $T_{\phi}^{\K}$ and $T_{\psi}^{\H}$ are equivalent via 
$$E = T_{\bar{a}}^{\H, \K} \quad \mbox{and} \quad F = T_{a}^{\K, \H},$$
where $a \in \mathscr{G} L^{\infty}$ with $a \K = \H$.  Also suppose that  $C_{\H}$ is an antilinear unitary mapping on $\H$ such that $F^{-1} C_{\H} F = E C_{\H} E^{-1}$. Then $T_{\psi}^{\H}$ is complex selfadjoint with respect to $C_{\H}$ if and only if $T_{\phi}^{\K}$ is complex selfadjoint with respect to the antilinear unitary mapping $C_{\K} = F^{-1} C_{\H} F$. 
\end{Theorem}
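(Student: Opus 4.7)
The plan is to use the hypothesis to construct a companion antilinear unitary $C_{\K}$ on $\K$ and then propagate complex selfadjointness across the equivalence $T_{\phi}^{\K} = E T_{\psi}^{\H} F$ by a direct algebraic manipulation, in the same spirit as one verifies that ordinary similarity preserves selfadjointness.

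First, a short inner-product computation shows that $E = T_{\bar a}^{\H, \K}$ is exactly the Hilbert space adjoint $F^{*}$ of $F = T_{a}^{\K, \H} = M_{a}|_{\K}$, and both operators are invertible thanks to Proposition \ref{2.10AA}. With this identification, I would apply Proposition \ref{4.2N} with the roles of $\H$ and $\K$ interchanged (so that $F$ plays the role of the ambient invertible map from the ``$\H$-space'' $\K$ to the ``$\K$-space'' $\H$) to conclude that the two expressions $F^{-1} C_{\H} F$ and $E C_{\H} E^{-1}$ coincide as maps $\K \to \K$ and genuinely define an antilinear unitary $C_{\K}$ on $\K$. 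Thus the candidate $C_{\K}$ is in place and is the antilinear unitary that should pair with $T_{\phi}^{\K}$.

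Next, I would rewrite the two characterizations of $C_{\K}$ as intertwining identities: $F C_{\K} = C_{\H} F$, which also gives $F C_{\K}^{-1} = C_{\H}^{-1} F$, and $C_{\K} E = E C_{\H}$. Using these to move $E$ and $F$ past the conjugations in $T_{\phi}^{\K} = E T_{\psi}^{\H} F$, I obtain
\[
C_{\K} T_{\phi}^{\K} C_{\K}^{-1} = C_{\K} E T_{\psi}^{\H} F C_{\K}^{-1} = E \bigl( C_{\H} T_{\psi}^{\H} C_{\H}^{-1} \bigr) F.
\]
On the other hand, because $E = F^{*}$ (hence $F = E^{*}$), taking adjoints in $T_{\phi}^{\K} = E T_{\psi}^{\H} F$ yields $(T_{\phi}^{\K})^{*} = E (T_{\psi}^{\H})^{*} F$. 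Since $E$ and $F$ are invertible, cancellation then forces
\[
C_{\K} T_{\phi}^{\K} C_{\K}^{-1} = (T_{\phi}^{\K})^{*} \iff C_{\H} T_{\psi}^{\H} C_{\H}^{-1} = (T_{\psi}^{\H})^{*},
\]
which is precisely the asserted equivalence.

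The only mildly delicate point is the bookkeeping of antilinearity through the compositions: one needs to remember that the inverse of an antilinear unitary is again an antilinear unitary and that the identity $(BC)^{-1} = C^{-1} B^{-1}$ is unaffected by antilinearity of any of the factors. Once that is in place, the theorem reduces to the standard fact that an equivalence of operators which is compatible with the antilinear unitaries at both ends transports the symmetry $C T C^{-1} = T^{*}$ in either direction.
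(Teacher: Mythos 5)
Your proposal is correct and follows essentially the same route as the paper: substitute $C_{\K} = E C_{\H} E^{-1}$ on the left, $C_{\K}^{-1} = F^{-1} C_{\H}^{-1} F$ on the right, use $E = F^{*}$ to get $(T_{\phi}^{\K})^{*} = E (T_{\psi}^{\H})^{*} F$, and cancel the invertible factors. The only cosmetic difference is that you phrase the substitution via intertwining identities and invoke Proposition \ref{4.2N} to certify that $C_{\K}$ is an antilinear unitary, which the paper leaves implicit.
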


\begin{proof}
Note that 
\begin{align*}
C_{\K} T_{\phi}^{\K} C_{\K}^{-1}  = (T_{\phi}^{\K})^{*} & \iff (E C_{\H} E^{-1})(E T_{\psi}^{\H} F)(F^{-1} C_{\H}^{-1} F) = E (T_{\psi}^{\H})^{*} F\\
& \iff C_{\H} T_{\psi}^{\H} C_{\H}^{-1}  = (T_{\psi}^{\H})^{*} \qedhere
\end{align*}
\end{proof}

It is natural to also consider a conjugation on $\K$ which is equivalent, via the operators $E$ and $F$ defined in Theorem \ref{comcpclcicic}, to the antilinear unitary mapping $C_{\H}$ on $\H$.

\begin{Proposition}\label{P4.5aN}
With the same assumptions as in Proposition \ref{4.2N}, and with 
\begin{equation}\label{SCPI}
\widetilde{C}_{\K} = E C_{\H} F,
\end{equation}
the following are equivalent:
\begin{enumerate}
\item $\widetilde{C}_{\K}$ is an antilinear unitary mapping on $\K$;
\item $(FE) C_{\H} (FE) = C_{\H}$;
\item $E C_{\H} F = F^{-1} C_{\H} E^{-1}$.
\end{enumerate}
\end{Proposition}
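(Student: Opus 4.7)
The plan is to argue $(a)\!\iff\!(c)\!\iff\!(b)$, paralleling the proof of Proposition~\ref{4.2N}. Since $E$ and $F$ are linear and invertible while $C_{\H}$ is antilinear and invertible, the composition $\widetilde{C}_{\K}=EC_{\H}F$ is automatically an invertible antilinear operator on $\K$, so (a) reduces to the single identity $\widetilde{C}_{\K}^{*}=\widetilde{C}_{\K}^{-1}$. I would then compute the two sides separately. Using the adjoint rule $(DAB)^{*}=B^{*}A^{*}D^{*}$ for a composition with antilinear $A$ and linear $B,D$ (which follows directly from the defining relation $\langle Cx,y\rangle=\overline{\langle x,C^{*}y\rangle}$), together with $E^{*}=F$ and $F^{*}=E$ (from $E=F^{*}$) and $C_{\H}^{*}=C_{\H}^{-1}$ (antilinear unitarity of $C_{\H}$), one obtains
$$\widetilde{C}_{\K}^{*}=F^{*}C_{\H}^{*}E^{*}=EC_{\H}^{-1}F,\qquad \widetilde{C}_{\K}^{-1}=F^{-1}C_{\H}^{-1}E^{-1}.$$
Equating these and then taking the inverse of both sides (so each triple product reverses order) converts $EC_{\H}^{-1}F=F^{-1}C_{\H}^{-1}E^{-1}$ into $F^{-1}C_{\H}E^{-1}=EC_{\H}F$, which is exactly (c). This establishes $(a)\!\iff\!(c)$.

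For $(c)\!\iff\!(b)$, I would simply pre-multiply (c) by $F$ and post-multiply by $E$; the factors $FF^{-1}$ and $E^{-1}E$ cancel to give $(FE)C_{\H}(FE)=C_{\H}$, which is (b). Conversely, pre-multiplying (b) by $F^{-1}$ and post-multiplying by $E^{-1}$ recovers (c). Both manipulations are legitimate because $E$ and $F$ are invertible.

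The only real subtlety lies in the bookkeeping of antilinear adjoints, in particular verifying once and for all the identity $(EC_{\H}F)^{*}=F^{*}C_{\H}^{*}E^{*}$ when $C_{\H}$ is antilinear and $E,F$ are linear. Once that rule is in hand, the argument is essentially an algebraic dual of Proposition~\ref{4.2N}, with the roles of $\H$ and $\K$ (and of $C_{\H}$ and $C_{\K}$) interchanged, and the sandwich $E(\,\cdot\,)F$ playing the role of $F^{-1}(\,\cdot\,)F$ in the earlier argument.
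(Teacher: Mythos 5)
Your proof is correct. The paper gives no proof of this proposition (it is left as the evident analogue of Proposition \ref{4.2N}), and your argument is exactly the intended one: the order-reversal rule $(EC_{\H}F)^{*}=F^{*}C_{\H}^{*}E^{*}$ for antilinear $C_{\H}$ sandwiched between linear maps — which you rightly flag as the only delicate point — follows directly from $\langle Cx,y\rangle=\overline{\langle x,C^{*}y\rangle}$, and with $E^{*}=F$, $F^{*}=E$, $C_{\H}^{*}=C_{\H}^{-1}$ in hand, the chain $(a)\Leftrightarrow(c)\Leftrightarrow(b)$ is the straightforward invertible-sandwich algebra you describe.
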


\begin{Proposition}
Let $\widetilde{C}_{\K}$ be defined as in Proposition \ref{P4.5aN}  and assume that it is an anti-linear unitary mapping.
Then $C_{\H}^2 = I_{\operatorname{d}}$ if and only if $\widetilde{C}_{\K}^{2} = I_{\operatorname{d}}$.
\end{Proposition}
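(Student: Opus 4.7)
The plan is to compute $\widetilde{C}_{\K}^{2}$ directly from the definition $\widetilde{C}_{\K} = E C_{\H} F$ and to reduce it to a conjugate of $C_{\H}^{2}$ by applying the identity from Proposition \ref{P4.5aN}(b). Since $E: \H \to \K$ is invertible by Proposition \ref{2.10AA}, conjugation by $E$ sends the identity on $\H$ to the identity on $\K$ and is a bijection, so the reduction will immediately yield the desired equivalence.

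Concretely, I would first regroup the product as
$$\widetilde{C}_{\K}^{2} = (E C_{\H} F)(E C_{\H} F) = E\, C_{\H}\, (F E C_{\H} F).$$
Because $\widetilde{C}_{\K}$ is assumed to be an antilinear unitary mapping, Proposition \ref{P4.5aN} gives $(FE) C_{\H} (FE) = C_{\H}$, and multiplying on the right by $E^{-1}$ rearranges this to $F E C_{\H} F = C_{\H} E^{-1}$. Substituting back collapses the inner factor to
$$\widetilde{C}_{\K}^{2} = E\, C_{\H} \cdot C_{\H}\, E^{-1} = E\, C_{\H}^{2}\, E^{-1}.$$

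From here the conclusion is immediate: $\widetilde{C}_{\K}^{2} = I_{\operatorname{d}}$ on $\K$ if and only if $E C_{\H}^{2} E^{-1} = I_{\operatorname{d}}$, if and only if $C_{\H}^{2} = I_{\operatorname{d}}$ on $\H$. I do not anticipate any genuine obstacle here; the only care needed is the antilinear/linear bookkeeping, namely that $C_{\H}^{2}$ and $\widetilde{C}_{\K}^{2}$ are both \emph{linear} (being products of two antilinear maps), which is precisely what makes the sandwich $E C_{\H}^{2} E^{-1}$ a well-defined operator identity on $\K$ and allows the conjugation argument to go through unambiguously.
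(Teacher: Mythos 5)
Your proof is correct and follows essentially the same route as the paper: both compute $\widetilde{C}_{\K}^{2}=E C_{\H}(FEC_{\H})F$ and use the identity $(FE)C_{\H}(FE)=C_{\H}$ from Proposition \ref{P4.5aN} to collapse the middle factor, arriving at $\widetilde{C}_{\K}^{2}=E C_{\H}^{2}E^{-1}$, from which the equivalence follows by invertibility of $E$. If anything, your write-up is slightly more careful than the paper's one-line computation, which jumps straight to $I_{\operatorname{d}}$ without stating the intermediate identity $\widetilde{C}_{\K}^{2}=E C_{\H}^{2}E^{-1}$ that actually yields the ``if and only if.''
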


\begin{proof}
Notice that 
\begin{align*}
\widetilde{C}_{\K}^2 & = E C_{\H} (FE C_{\H}) F = E C_{\H}^2 E^{-1} F^{-1} F = I_{\operatorname{d}} \qedhere.
\end{align*}
\end{proof}

Note that, even if $\widetilde{C}_{\K}$, defined by Proposition \ref{SCPI}, is an antilinear unitary mapping, we are unable to conclude, in general, that $T_{\phi}^{\K}$ is complex selfadjoint with respect to $\widetilde{C}_{\K}$ when $T_{\psi}^{\H}$ is complex selfadjoint with respect to $C_{\H}$. Indeed, in general, if $F$ is not a unitary operator, we have 
\begin{align*}
\widetilde{C}_{\K} T_{\phi}^{\K} \widetilde{C}^{-1}_{\K} & = F^{-1} C_{\H} E^{-1} (E T_{\psi}^{\H} F) F^{-1} C_{\H}^{-1} E^{-1}\\
& = F^{-1} (T_{\psi}^{\H})^{*} E^{-1}\\
& \not = F^{*} (T_{\psi}^{\H})^{*} E^{*} = (T_{\phi}^{\K})^{*}.
\end{align*}
When $F$ is a unitary operator, we have the following.

\begin{Corollary}\label{4.5cN}
Let $C_{\K}$ and $\widetilde{C}_{\K}$ be defined as in Propositions \ref{4.2N} and \ref{P4.5aN}, respectively. If $F$ is unitary then 
\begin{enumerate}
\item $C_{\K}$ is an antilinear unitary mapping.
\item $C_{\K} = \widetilde{C}_{\K}$,
\item $T_{\phi}^{\K} = E T_{\psi}^{\H} F$ is complex selfadjoint with respect to $C_{\K}$ if and only if $T_{\psi}^{\H}$ is complex selfadjoint with respect to $C_{\H}$.
\end{enumerate}
\end{Corollary}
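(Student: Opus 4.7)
The plan is to leverage the fact that when $F$ is unitary, the three formulas $F^{-1}$, $F^{*}$, and $E$ all coincide, so the composition identities $FE = I_{\operatorname{d}}$ (on $\H$) and $EF = I_{\operatorname{d}}$ (on $\K$) hold. With those identifications, each of (a), (b), (c) collapses to a line-or-two verification appealing to results already proved.

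For (a), I invoke Proposition \ref{4.2N} in its ``reverse'' form, i.e. with the roles of $\H$ and $\K$ exchanged, so that the given antilinear unitary lives on $\H$ and $C_{\K} = F^{-1}C_{\H}F$ is the object whose unitary character we are checking. The equivalence (b) of that proposition then reduces to asking that the operator $FE$ commute with $C_{\H}$; since $F$ is unitary, $FE = FF^{*} = I_{\operatorname{d}}$, and the commutation is automatic. For (b), the verification is a direct rewrite:
\begin{equation*}
\widetilde{C}_{\K} \;=\; E\, C_{\H}\, F \;=\; F^{-1} C_{\H} F \;=\; C_{\K},
\end{equation*}
using $E = F^{*} = F^{-1}$.

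For (c), I appeal to Theorem \ref{comcpclcicic}. Its hypothesis requires $F^{-1}C_{\H} F = E C_{\H} E^{-1}$, but since $E = F^{-1}$ gives $E^{-1} = F$, both sides agree and the hypothesis is automatic. The conclusion of that theorem is exactly the biconditional asserted in (c), so nothing further needs to be shown.

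There is no real obstacle here; the corollary is a cleanup of the general machinery under the unitary assumption on $F$. The only mild subtlety is keeping track of the direction of the operators ($F:\K\to\H$, $E:\H\to\K$ in the convention of Theorem \ref{comcpclcicic}) and noting that Proposition \ref{4.2N} must be read with $\H$ and $\K$ interchanged so that it starts from the conjugation $C_{\H}$ on $\H$ and produces $C_{\K}$ on $\K$, rather than the reverse.
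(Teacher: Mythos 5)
Your proposal is correct and is essentially the intended derivation: the paper states this corollary without proof, treating it as immediate from Proposition \ref{4.2N}, Proposition \ref{P4.5aN}, and Theorem \ref{comcpclcicic} once $E=F^{*}=F^{-1}$. You also correctly resolve the paper's notational wobble by reading Proposition \ref{4.2N} with $\H$ and $\K$ interchanged (so that the given conjugation is $C_{\H}$ and $C_{\K}=F^{-1}C_{\H}F$), which is the reading forced by the computation preceding the corollary.
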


\begin{Example}
For an inner function $\gamma$, the truncated Toeplitz operator $A_{\phi}^{\gamma}$ (defined in \eqref{ATTO999}) is complex selfadjoint \cite{MR2187654} (see also \cite[p.~291]{MR3526203}) with respect to the conjugation $C_{\gamma}$ on the model space $\K_{\gamma}$ defined by 
$$(C_{\gamma} f)(\xi) = \gamma(\xi) \overline{\xi f(\xi)}.$$

Now let $\alpha$ be any inner function and $a \in \mathscr{G}\!H^{\infty}$ such that $a \K_{\alpha} = \K_{\gamma}$, i.e., $a = \bar{\alpha} \gamma \bar{h}$ for some $g \in \mathscr{G}\!H^{\infty}$. Since $\alpha, \gamma$ are inner, we have 
$|a| = |h|$ and that $a \bar{a} = h \bar h$ if and only if $a h^{-1} = \bar{h} \bar{a}^{-1}$. Since the left hand side of the last equality belongs to $H^{\infty}$ while the right hand side belongs to $\overline{H^{\infty}}$, we conclude that $a h^{-1} = \bar{h} \bar{a}^{-1}$ is a constant and therefore $h = c a$. We can assume that $c = 1$ and it follows that 
\begin{equation}\label{IIN}
\gamma = \bar{a}^{-1} \alpha a.
\end{equation}
Therefore, 
 Theorem \ref{6.1A} says that 
$$A_{|a|^2 \psi}^{\alpha} = A_{\bar{a}}^{\gamma, \alpha} A_{\psi}^{\gamma} A_{a}^{\alpha, \gamma}.$$
Theorem \ref{comcpclcicic} now says that $A_{|a|^2 \psi}^{\alpha}$ is complex symmetric with respect to the conjugation $\widetilde{C}$ defined on $\K_{\alpha}$ by $\widetilde{C} f = a^{-1} C_{\gamma} a f$. Note that $\widetilde{C}$ satisfies the equivalent conditions of Proposition \ref{4.2N}. Indeed, using \ref{IIN}, we see that for any $f \in \K_{\alpha}$
\begin{align*}
a^{-1} C_{\gamma} a f & = a^{-1} \gamma \bar{z} \bar{a} \bar{f}\\
& = a^{-1} (\bar{a}^{-1} \alpha a) \bar{z} \bar{a} \bar{f}\\
& = \alpha \bar{z} \bar{f}
\end{align*}
and 
\begin{align*}
A_{\bar{a}}^{\gamma, \alpha} C_{\gamma} A_{\bar{a}^{-1}}^{\alpha, \gamma} f & = P_{\alpha} \bar{a} P_{\gamma} C_{\gamma}P_{\gamma} \bar{a}^{-1} P_{\alpha} f\\
& = P_{\alpha} \bar{a} P_{\gamma} C_{\gamma}\bar{a}^{-1} f\\
& = P_{\alpha} \bar{a} P_{\gamma}\gamma \bar{z} a^{-1} \bar{f}\\
& = P_{\alpha} \bar{a} P_{\gamma} \bar{a}^{-1} \alpha \bar{z} \bar{f}\\
& = P_{\alpha} \bar{a} \bar{a}^{-1} \alpha \bar{z} \bar{f}\\
& = \alpha \bar{z} \bar{f}
\end{align*}
and we verify that $\widetilde{C}$ turns out to be the usual conjugation on $\K_{\alpha}$.
\end{Example}

\section{Kernels of Toeplitz operators}\label{sectionfive}

When $\H_1 = \H_2 = \K_1 = \K_2 = H^2$, then $a H^2 = H^2$ if and only if $a \in \mathscr{G}\!H^{\infty}$. We say that $\phi, \psi \in L^{\infty}$ are {\em equivalent}, and write $\phi \sim \psi$, if and only if there are $a_{+} \in \mathscr{G}\!H^{\infty}$ and $a_{-} \in \mathscr{G}\!\overline{H^{\infty}}$ such that
\begin{equation}\label{5.1}
\phi = a_{-} \psi a_{+}.
\end{equation}
In this case, the Toeplitz operators $T_{\phi}$ and $T_{\psi}$ are equivalent (see Theorem \ref{2.19A}). Conversely, if $T_{\phi}$ and $T_{\psi}$ are equivalent via \eqref{2.9}, then by Corollary \ref{3.5A} there exists an $a_{+} \in \mathscr{G}\!H^{\infty}$ such that $\ker T_{\psi} = a_{+} \ker T_{\psi}$. It follows that $\psi$ and $\phi$ satisfy a relation of the form \eqref{5.1}, so that $\phi \sim \psi$ (see the remarks before Proposition \ref{3,8A}).

It is often the case that the function  $\psi$ in \eqref{5.1} is the conjugate of an inner function $\theta$, i.e., $\phi \in \mathscr{G}\!L^{\infty}$ and 
\begin{equation}\label{5.2}
\phi = a_{-} \bar{\theta} a_{+}.
\end{equation}
This is the case when  the function $\phi$ is an invertible H\"{o}lder continuous function on $\T$ with exponent $\mu \in (0, 1)$ which has a negative index with respect to the origin  \cite[Ch.~3, Cor.~5,2]{MR881386} and we take $\theta = z^n$. 
It is also the case for certain classes of almost periodic functions $\phi$ on $\R$, where the inner function $\theta$ is an exponential of the form $\theta(x) = e^{i a x}$, $a \in (0, \infty)$ \cite{MR1898405}. 

Let us give a few examples of equivalent functions. 

\begin{Example}[Finite Blaschke products]\label{FBPEx}

Suppose $a_1, \ldots, a_n$  are points in the open unit disk $\D$ (repetitions allowed) and 
$$\alpha(z) = \prod_{j = 1}^{n} \frac{z - a_j}{1 - \bar{a}_j z}$$ is a  finite Blaschke product \cite{MR3793610}.  Note that $\alpha$ is an inner function. For  any $z \in \T$ we have the identity 
\begin{equation}\label{5.3}
\alpha(z) = \frac{1}{z^n}  \prod_{j = 1}^{n} (z - a_j) \cdot z^n \cdot \prod_{j = 1}^{n} \frac{1}{1 - \bar{a}_j z} = \alpha_{-}(z) \cdot z^n \cdot \alpha_{+}(z),
\end{equation}
where $\alpha_{+}, \bar{\alpha}_{-} \in \mathscr{G}\!H^{\infty}$, and hence $\alpha \sim z^n$. As an immediate consequence, we see that all finite Blaschke products of the same degree (same number of zeros -- counting multiplicity) are equivalent. 
\end{Example}

\begin{Example}[Generalized Frostman shifts]\label{5.2A}
Let $h \in H^{\infty}$ with $\|h\|_{\infty} < 1$. For any inner function $\theta$ we define 
\begin{equation}\label{A1.1}
\theta_{\bar{h}} := \frac{\theta - \overline{h}}{1 - h \theta}.
\end{equation}
Using the fact that $\theta$ is inner, and thus $\theta \bar{\theta} = 1$ almost everywhere on $\T$, it follows that 
$
\theta_{\bar{h}} \in L^{\infty}$ and $|\theta_{\bar{h}}| = 1$ almost everywhere on $\T$. 
Furthermore, a calculation shows that 
$$
\theta_{\bar{h}} = a_{-} \theta a_+,
$$
where 
$$
a_{-} = 1 - \bar{h} \bar{\theta} \in \mathscr{G}\!\overline{H^{\infty}} \quad \mbox{and} \quad a_+  = (1 - h \theta)^{-1} \in \mathscr{G}\!H^{\infty}.
$$
Thus $\theta \sim \theta_{\bar{h}}$. Observe that $\theta$ is inner while $\theta_{\bar{h}}$ is unimodular on $\T$, but not necessarily inner (since it may not be the boundary values of a bounded analytic function on $\D$).
An interesting case arises when $\bar{h}$ is a constant function equal to $a \in \D$. In this case, the function $\theta_{\bar{h}}$ from \eqref{A1.1} becomes 
\begin{equation}\label{Frostyusdfsfgsd}
\theta_{a}(z) = \frac{\theta(z) - a}{1 - \bar{a} \theta(z)},
\end{equation}
which is a {\em Frostman shift} of $\theta$ (and also an inner function).  Thus $\theta$ and $\theta_a$ are equivalent inner functions for every $a \in \D$.
A classical result of Frostman \cite{Frostman} (see also \cite[p.~75]{MR2261424}) says that $\theta_a$ is a Blaschke product for ``most'' $a \in \D$. 
\end{Example}

It follows from \eqref{5.2} that $a_{+}^{-1}$ is a multiplier from $\K_{\theta}$ onto $\ker T_{\phi}$, thus yielding two isomorphisms between $\ker T_{\phi}$ and $\K_{\theta}$ as in Proposition \ref{2.10AA}. This gives us the following (taking Proposition \ref{2.5A} and Corollary \ref{3.5A} into account). 

\begin{Proposition}\label{5.3A}
If $\phi = a_{-} \bar{\theta} a_{+}$, where $ a_{+}, \bar{a}_{-} \in \mathscr{G}\!\overline{H^{\infty}}$, then 
\begin{enumerate}
\item $\ker T_{\phi} = a_{+}^{-1} \K_{\theta}$.
\item $(\ker T_{\phi})^{\perp} = \bar{a}_{+} \K_{\theta}^{\perp}$.
\end{enumerate}
\end{Proposition}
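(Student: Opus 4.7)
The plan is to derive both statements as immediate consequences of the earlier machinery in the paper, specifically Corollary \ref{3.5A}(c) (equivalently the reformulation in Corollary \ref{3.7a}) and Proposition \ref{2.5A}(b), with a small amount of bookkeeping to identify the data of the factorization $\phi = a_- \bar\theta a_+$ with the hypotheses of those results.

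For part (a), I would specialize Corollary \ref{3.5A}(c) to the classical Toeplitz situation $\K_1 = \K_2 = \H_1 = \H_2 = H^2$. Here the factor $a_+$ plays the role of $a_1$ and $\bar a_-$ plays the role of $a_2$; both are invertible multipliers of $H^2$ onto $H^2$ because $a_+ \in \mathscr{G}\!H^\infty$ and $\bar a_- \in \mathscr{G}\!H^\infty$. With $\psi = \bar\theta$ and $\phi = \bar a_2\,\psi\,a_1 = a_- \bar\theta a_+$, the relation $a_1 \ker T_\phi^{\K_1,\K_2} = \ker T_\psi^{\H_1,\H_2}$ becomes
\begin{equation}
a_+ \ker T_\phi = \ker T_{\bar\theta} = \K_\theta,
\end{equation}
using the well-known identification $\ker T_{\bar\theta} = \K_\theta$ recalled earlier in \S 2. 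Multiplying by $a_+^{-1} \in \mathscr{G}\!H^\infty$ yields $\ker T_\phi = a_+^{-1} \K_\theta$.

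For part (b), the point is that the equality $\ker T_\phi = a_+^{-1} \K_\theta$ is precisely a multiplier relation between two closed subspaces of $L^2$: the function $a_+^{-1} \in \mathscr{G}\!L^\infty$ is an invertible multiplier from $\K_\theta$ onto $\ker T_\phi$. Proposition \ref{2.5A}(b) then applies with $a$ replaced by $a_+^{-1}$, $\K$ by $\K_\theta$, and $\H$ by $\ker T_\phi$, giving
\begin{equation}
\overline{a_+^{-1}}\,(\ker T_\phi)^\perp = \K_\theta^\perp,
\end{equation}
which rearranges to $(\ker T_\phi)^\perp = \bar a_+ \K_\theta^\perp$.

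There is really no substantive obstacle here; the work is organizational rather than technical. The only mild subtlety is making sure the identification between the factorization convention of \S 5 (with an inner factor from $\overline{H^\infty}$ on the left and an outer-type factor from $H^\infty$ on the right) and the convention of Corollary \ref{3.5A}(c) (which uses $\bar a_2$ on the left and $a_1$ on the right) is set up correctly, i.e., $a_2 = \bar a_-$ and $a_1 = a_+$, so that both $a_1, a_2 \in \mathscr{G}\!H^\infty$ act as invertible multipliers of $H^2$. Once that identification is made, (a) is a direct citation and (b) follows by one application of the annihilator version of the multiplier equality.
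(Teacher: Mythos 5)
Your proof is correct and follows essentially the same route as the paper: part (a) is the specialization of Corollary \ref{3.5A}(c) to $\K_1=\K_2=\H_1=\H_2=H^2$ with $a_1=a_+$, $a_2=\bar a_-$, $\psi=\bar\theta$, and part (b) is a single application of Proposition \ref{2.5A}(b) to the resulting multiplier relation $\ker T_\phi=a_+^{-1}\K_\theta$. You also read the hypothesis the intended way, namely $a_+,\bar a_-\in\mathscr{G}\!H^{\infty}$ as in \eqref{5.2}, which is what makes both functions invertible multipliers of $H^2$ onto itself.
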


Moreover, if \eqref{5.2} holds, when studying $\ker T_{\phi}$ we can assume, without loss of generality, that $|\phi| = 1$ almost everywhere on $\T$ because $\ker T_{\phi} = \ker T_{\widetilde{\phi}}$ with $\widetilde{\phi} = \bar{a}_{+} \bar{\theta} a_{+}$ (since $a_{-}, \bar{a}_{+} \in \mathscr{G}\!\overline{H^{\infty}}$ \cite{MR3806717}). On the other hand, if $|\phi| = 1$ almost everywhere on $\T$ and $\phi = a_{-} \bar{\theta} a_{+}$, it is not difficult to see that $a_{-}$ and $\bar{a}_{+}^{-1}$ must differ by a multiplicative constant. Thus, if $|\phi| = 1$ almost everywhere on $\T$ and \eqref{5.2} holds, then 
$$(\ker T_{\phi})^{\perp} = a_{-}^{-1} \K_{\theta}^{\perp}.$$

The equality in Proposition \ref{5.3A} has a connection with Hayashi's representation of Toeplitz kernels. In \cite{MR853630}, Hayashi proved that the kernel $K$ of a Toeplitz operator  with symbol $\phi$ can be written as $K = f_{+} \K_{\alpha}$, where $f_{+}$ is outer, $\alpha$ is inner with $\alpha(0) = 0$, and $f_{+}$ multiplies the model space $\K_{\alpha}$ isometrically onto $K$. When $f_{+} \in \mathscr{G}\!H^{\infty}$, we must have $\phi = f_{-} \bar{\alpha} f_{+}^{-1}$ for some $f_{-} \in \mathscr{G}\!\overline{H^{\infty}}$  (see \S \ref{sectionthree}).
In general, neither the outer function $f_{+}$ nor the inner function $\alpha$ in Hayashi's representation of a Toeplitz kernel are explicitly known. However,  the representation in Proposition \ref{5.3A} is explicit whenever one can factor $\phi$ as in \eqref{5.2}. This can be done for a wide class of functions and provides an isomorphism between $\ker T_{\phi}$ and $\K_{\theta}$ through an invertible bounded multiplier. From here, one might ask when is this isomorphism isometric, as in Hayashi's representation. The answer lies in the relationship between the two isomorphisms from $\ker T_{\phi}$ onto $\K_{\theta}$, namely
\begin{equation}\label{5.9}
I_{1} := M_{a_{+}}|_{\ker T_{\phi}} \quad \mbox{and} \quad I_{2} := P_{\theta} \bar{a}_{+}^{-1} I_{\operatorname{d}}|_{\ker T_{\phi}} = (I_{1}^{-1})^{*},
\end{equation}
and, by Corollary \ref{2.18A}, can be given in terms of a truncated Toeplitz operator as follows. 

\begin{Proposition}\label{5.7N}
Let $\phi = a_{-} \bar{\theta} a_{+}$, where $a_{+}, \bar{a}_{-} \in \mathscr{G}\!H^{\infty}$ and $\theta$ is inner. Then 
$M_{a_{+}^{-1}}: \K_{\theta} \to \ker T_{\phi}$ is an isometric isomorphism if and only if 
\begin{equation}\label{8.8N}
A^{\theta}_{1 - |a_{+}|^{-2}} = 0
\end{equation}
or, equivalently, 
\begin{equation}\label{5.9N}
1 - |a_{+}|^{-2} \in \overline{\theta H^2} + \theta H^2.
\end{equation}
\end{Proposition}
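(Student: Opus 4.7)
The plan is to recognize this statement as essentially an instance of Corollary \ref{2.18A} applied in the setting $\H = \ker T_\phi$, $\K = \K_\theta$, and $a = a_+^{-1}$, and then to translate the resulting condition on a truncated Toeplitz operator using the zero-symbol characterization in \eqref{Agammazerooooo}.

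First I would verify that the hypotheses of Corollary \ref{2.18A} are in place. By Proposition \ref{5.3A}(a), the assumption $\phi = a_- \bar\theta a_+$ with $a_+, \bar{a}_- \in \mathscr{G}\!H^\infty$ yields $\ker T_\phi = a_+^{-1}\K_\theta$. Setting $a := a_+^{-1}$, we have $a \in \mathscr{G}\!H^\infty \subset \mathscr{G}\!L^\infty$, and $\K_\theta$ as well as $\ker T_\phi$ are closed subspaces of $L^2$. Thus the multiplier identity $a\K = \H$ from Corollary \ref{2.18A} holds with $\K = \K_\theta$ and $\H = \ker T_\phi$, and the map in question is precisely $F = M_a|_{\K_\theta}$.

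Next I would invoke Corollary \ref{2.18A} directly: it tells us that $M_{a_+^{-1}}\colon \K_\theta \to \ker T_\phi$ is an isometric isomorphism if and only if $T^{\K_\theta}_{1 - |a_+^{-1}|^2} = 0$. Since $|a_+^{-1}|^2 = |a_+|^{-2}$, and since $T^{\K_\theta}_\psi = A^\theta_\psi$ in the truncated Toeplitz notation from \eqref{ATTO999}, this is exactly the condition $A^\theta_{1 - |a_+|^{-2}} = 0$, which establishes \eqref{8.8N}.

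Finally, the equivalence of \eqref{8.8N} with \eqref{5.9N} is read off from \eqref{Agammazerooooo}: $A^\gamma_\psi \equiv 0$ if and only if $\psi \in \bar\gamma\overline{H^2} + \gamma H^2$. Taking $\gamma = \theta$ and $\psi = 1 - |a_+|^{-2}$, and noting that $\overline{\theta H^2} = \bar\theta\overline{H^2}$ since $\theta$ is inner (hence unimodular on $\T$), produces the stated membership $1 - |a_+|^{-2} \in \overline{\theta H^2} + \theta H^2$. There is essentially no obstacle here beyond correctly matching the notation; the content of the proposition is packaged entirely in Proposition \ref{5.3A}, Corollary \ref{2.18A}, and the characterization of trivial truncated Toeplitz symbols.
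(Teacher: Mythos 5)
Your proposal is correct and follows essentially the same route the paper intends: the paper states Proposition \ref{5.7N} without an explicit proof but derives it, exactly as you do, from the identification $\ker T_{\phi}=a_{+}^{-1}\K_{\theta}$ (Proposition \ref{5.3A}) together with Corollaries \ref{2.17A}/\ref{2.18A} (isometry of $F=M_{a}|_{\K}$ being equivalent to $T^{\K}_{1-|a|^{2}}=0$) and the Sarason criterion \eqref{Agammazerooooo} for a truncated Toeplitz operator to vanish. The only cosmetic difference is that you orient the multiplier as $a=a_{+}^{-1}:\K_{\theta}\to\ker T_{\phi}$ while the paper works with $I_{1}=M_{a_{+}}|_{\ker T_{\phi}}$; these are inverse isometric isomorphisms, so the resulting condition on $\K_{\theta}$ is the same.
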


\begin{Example}[Generalized Crofoot transform]\label{GCTT}
For $h \in H^{\infty}$ with $\|h\|_{\infty} < 1$ and $\theta$ inner, recall
$$\theta_{\bar{h}} = \frac{\theta - \bar{h}}{1 - h \theta},$$  the generalized Frostman shift from Example \ref{5.2A}. For any $k \in \C \setminus \{0\}$,
$$\overline{\theta_{\bar{h}}} = a_{-} \bar{\theta} a_{+},$$
where  
$$a_{+} := k^{-1} (1 - h \theta) \in \mathscr{G}\!H^{\infty} \quad \mbox{and} \quad a_{-} = k (1 - \bar{h} \bar{\theta})^{-1} \in \mathscr{G}\!\overline{H^{\infty}}.$$ From Proposition \ref{5.3A} we know that 
$$J_{\bar{h}} := \frac{k}{1 - h \theta}$$ is a multiplier from $\K_{\theta}$ onto $\ker T_{\overline{\theta_{\bar{h}}}}$. We call the operator of multiplication by $J_{\bar{h}}$ a {\em generalization of the  Crofoot transform}, for it includes, as a special case, the Crofoot transform as defined in \cite{MR2363975} -- as we will now show. 

From Proposition \ref{5.7N}, $M_{J_{\bar{h}}}$ will be an isometric isomorphism from $\K_{\theta}$ onto $\ker T_{\overline{\theta_{\bar{h}}}}$ if and only if 
\begin{equation}\label{5.14N}
1 - \frac{|k|^2}{|1 - h \theta|^2} \in \overline{\theta H^2} + \theta H^2.
\end{equation}
Observe that 
\begin{align*}
1 - \frac{|k|^2}{|1 - h \theta|^2} & = \frac{(1 - h \theta)(1 - \bar{h} \bar{\theta}) - |k|^2}{(1 - h \theta)(1 - \bar{h} \bar{\theta})}\\
& = \frac{h (\bar{h} - \theta) + \bar{h} (h - \bar{\theta}) + 1 - h \bar{h} - |k|^2}{(1 - h \theta)(1 - \bar{h} \bar{\theta})}\\
& = - \frac{h}{1 - h \theta} \theta - \frac{\bar{h}}{1 - \bar{h} \bar{\theta}} \bar{\theta} + \frac{1- |h|^2 - |k|^2}{(1 - h \theta)(1 - \bar{h} \bar{\theta})}
\end{align*}
and that 
$$\frac{h}{1 - h \theta}  \in H^{\infty} \subset H^2 \quad \mbox{and} \quad \frac{\bar{h}}{1 - \bar{h} \bar{\theta}}  \in \overline{H^{\infty}} \subset \overline{H^2}.$$
Therefore, \eqref{5.14N} is satisfied if and only if 
\begin{equation}\label{5.15N}
\frac{1 - |h|^2 - |k|^2}{(1 - h \theta)(1 - \bar{h} \bar{\theta})} \in \overline{\theta H^2} + \theta H^2.
\end{equation}
This last condition is certainly satisfied when $h = \bar{p} \in \D$ is a constant function, and then choosing the constant $k$ such that $|k|^2 = 1 - |h|^2$. In this case, $\theta_{\bar{h}} = \theta_{p}$ is an inner function (a Frostman shift of $\theta$ from \eqref{Frostyusdfsfgsd}), $\ker T_{\overline{\theta_{\bar{h}}}} = \K_{\theta_p}$ is a model space, and $M_{J_{\bar{h}}}  = M_{J_p}$ is the {\em Crofoot transform} which maps $\K_{\theta}$ isometrically onto $K_{\theta_p}$ (considered in \cite{MR2363975}). Whether \eqref{5.15N} can be satisfied for some $k \in \C \setminus \{0\}$ when $h$ is not a constant function is an open question worthy of further investigation. 
\end{Example}

\section{Truncated Toeplitz operators}\label{TTO6}

In this section, we continue (and refine)  our discussion of truncated Toeplitz operators begun in \S \ref{section2}.
From Theorem \ref{6.1A}, with $\theta = \alpha, \eta = \gamma$, and $\phi \in L^{\infty}$, and from Corollary \ref{2.18A}, we have the following.

\begin{Theorem}\label{6.1N}
If $\alpha$ and $\gamma$ are inner functions, $\phi \in L^{\infty}$, and $\alpha = \bar{a}^{-1} \gamma a$, with $a \in \mathscr{G}\!H^{\infty}$, then $\K_{\alpha} = a \K_{\gamma}$ and 
\begin{equation}\label{6.2N}
A_{\phi}^{\alpha} = A_{\bar{a}^{-1}}^{\gamma, \alpha} A_{\widetilde{\phi}}^{\gamma} A_{a^{-1}}^{\alpha, \gamma} = A_{\bar{a}^{-1}}^{\gamma, \alpha} A_{\widetilde{\phi}}^{\gamma} M_{a^{-1}}|_{\K_{\alpha}},
\end{equation}
where $\widetilde{\phi} = \phi |a|^2$. The relation in \eqref{6.2N} defines a unitary equivalence between $A_{\phi}^{\alpha}$ and $A_{\phi |a|^2}^{\gamma}$ if and only if 
$$1 - |a|^2 \in \overline{\gamma H^2} + \gamma H^2,$$
or, equivalently, 
$$1 - |a|^{-2} \in \overline{\alpha H^2} + \alpha H^2,$$ and, in this case, 
\begin{equation}\label{6.3N}
A_{\phi}^{\alpha} = a A_{\widetilde{\phi}}^{\gamma} a^{-1} I_{\operatorname{d}}|_{\K_{\alpha}}.
\end{equation}
\end{Theorem}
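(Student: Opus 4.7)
My plan is to recognize Theorem \ref{6.1N} as the specialization of Theorem \ref{6.1A} to the case $\theta=\alpha$, $\eta=\gamma$, $a_1=a_2=a$, together with the unitary-equivalence criterion of Corollary \ref{2.18A} and the well-known characterization of the kernel of the symbol-to-truncated-Toeplitz-operator map in \eqref{Agammazerooooo}.

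First, I would check that the hypothesis $\alpha=\bar a^{-1}\gamma a$ forces $\K_\alpha=a\K_\gamma$. Rearranging gives $a=\bar\gamma\alpha\bar a$, so with $h:=a\in\mathscr{G}\!H^\infty$ this is exactly the form required in condition (b) of Proposition \ref{2.2A}, which yields $\K_\alpha=a\K_\gamma$. With this in hand, the displayed identity \eqref{6.2N} is exactly \eqref{6.5} from Theorem \ref{6.1A}.

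Next, for the unitary equivalence, I would apply Corollary \ref{2.18A} with $\K=\K_\alpha$, $\H=\K_\gamma$, and multiplier $a^{-1}\in\mathscr{G}\!L^\infty$ (so that $a^{-1}\K_\alpha=\K_\gamma$). Under this choice, $F=M_{a^{-1}}|_{\K_\alpha}$ and its adjoint becomes the generalized Toeplitz operator $A_{\bar a^{-1}}^{\gamma,\alpha}$, matching the two outer factors in \eqref{6.2N}. Corollary \ref{2.18A} then says that \eqref{6.2N} is a unitary equivalence if and only if $T_{1-|a^{-1}|^2}^{\K_\alpha}=A_{1-|a|^{-2}}^{\alpha}=0$. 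Invoking the zero-symbol characterization \eqref{Agammazerooooo} with inner function $\alpha$ converts this to the membership relation $1-|a|^{-2}\in\overline{\alpha H^2}+\alpha H^2$. The equivalent form $1-|a|^2\in\overline{\gamma H^2}+\gamma H^2$ is then obtained from the equivalence of conditions (e) and (f) in Proposition \ref{2.16A}, which with $a_1=a_2=a^{-1}$ reads precisely $A_{1-|a|^{-2}}^{\alpha}=0\iff A_{1-|a|^2}^{\gamma}=0$.

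Finally, assuming the unitary equivalence holds, the relation $F^*=F^{-1}$ from Corollary \ref{2.18A} forces $A_{\bar a^{-1}}^{\gamma,\alpha}=(M_{a^{-1}}|_{\K_\alpha})^{-1}=M_a|_{\K_\gamma}$; substituting this into \eqref{6.2N} collapses the three-factor decomposition to $A_\phi^\alpha=aA_{\widetilde\phi}^\gamma a^{-1}I_{\operatorname{d}}|_{\K_\alpha}$, which is \eqref{6.3N}.

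I expect the only real obstacle to be a bookkeeping one, namely orienting the multiplier correctly when invoking Corollary \ref{2.18A}. One must take $\K=\K_\alpha$, $\H=\K_\gamma$, and the multiplier $a^{-1}$ rather than $a$, so that the condition ``$T_{1-|a_1|^2}^{\K}=0$'' there reproduces the asserted form $1-|a|^{-2}\in\overline{\alpha H^2}+\alpha H^2$. Once the orientation is fixed, the passage between the two equivalent forms of the criterion is a direct appeal to Proposition \ref{2.16A}, and the collapse of \eqref{6.2N} to \eqref{6.3N} is immediate.
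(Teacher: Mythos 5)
Your proposal is correct and follows the same route the paper intends: the paper derives Theorem \ref{6.1N} precisely by specializing Theorem \ref{6.1A} (with $\theta=\alpha$, $\eta=\gamma$, $a_1=a_2=a$) and invoking Corollary \ref{2.18A}, with the zero-symbol criterion \eqref{Agammazerooooo} translating the condition $A^{\alpha}_{1-|a|^{-2}}=0$ into the stated membership relations. Your care in orienting the multiplier as $a^{-1}:\K_\alpha\to\K_\gamma$ when applying Corollary \ref{2.18A} is exactly the bookkeeping the paper leaves implicit.
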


An example where \eqref{6.3N} holds is given by the Crofoot transform acting on a truncated Toeplitz operator, as shown in \cite{MR2363975} (see also Example \ref{GCTT}).

Note that if we consider an operator on $\K_{\alpha}$ by replacing $A_{\bar{a}_{2}^{-1}}^{\gamma, \alpha}$ in \eqref{6.4} by the inverse of $A_{a_{1}^{-1}}^{\theta, \eta}$ (which is equal to multiplication by $a_1$ on $\K_{\theta}$), we do not, in general, obtain a truncated Toeplitz operator. This can be seen with the following example. 

\begin{Example}\label{6.3}
For any $\lambda \in \overline{\D}$, and any finite Blaschke product $\theta$, define 
\begin{equation}\label{6.9}
k_{\lambda}^{\theta}(z) = \frac{1 - \overline{\theta(\lambda)} \theta(z)}{1 - \bar{\lambda} z} \quad \mbox{and} \quad 
\widetilde{k}_{\lambda}^{\theta}(z) = \frac{\theta(z) - \theta(\lambda)}{z - \lambda}.
\end{equation}
Now let $\alpha$ be the simple degree two Blaschke product defined by 
$$\alpha(z) = \frac{(z - \tfrac{1}{2})(z - \tfrac{1}{3})}{(1 - \tfrac{1}{2} z)(1 - \tfrac{1}{3} z)},$$
which can be factored as $\alpha = \bar{a}^{-1} z^2 a,$
where 
$$a(z) = \frac{1}{(1 - \tfrac{1}{2} z)(1 - \tfrac{1}{3} z)} \in \mathscr{G}\!H^{\infty},$$
and let $\phi(z) = \alpha(z)/z$. Then defining 
$$A := A_{a^{-1}}^{\alpha, z^2} A_{\alpha/z}^{\alpha} A_{a}^{z^2, \alpha} = a^{-1} A_{\alpha/z}^{\alpha} a I_{\operatorname{d}}|_{\K_{z^2}},$$ 
which is an operator from $\K_{z^2}$ to itself, we see that for any $f \in \K_{z^2} = \operatorname{span}\{1, z\}$, 
\begin{equation}\label{6.44N}
A f = a(0) f(0) a^{-1} \widetilde{k}_{0}^{\alpha} = a(0) a^{-1} \cdot \widetilde{k}_{0}^{\alpha} \otimes k_{0}^{z^2} (f).
\end{equation}
Thus, $A$ is a rank-one operator. If $A$ were a truncated Toeplitz operator, it would have be a scalar multiple of one of the following types: 
$k_{\lambda}^{z^2} \otimes  \widetilde{k}_{\lambda}^{z^2}$, $\widetilde{k}_{\lambda}^{z^2} \otimes k_{\lambda}^{z^2}$ for $\lambda \in \D$ or  $ k_{\zeta}^{z^2} \otimes k_{\zeta}^{z^2}$, where $\zeta \in \T$ \cite{MR2363975}. Comparing this with the form of $A$ in \eqref{6.44N}, we see that $a^{-1} \widetilde{k}_{0}^{\alpha}$ should be a constant multiple of $\widetilde{k}_{0}^{z^2}$, which it is not. 
\end{Example}

Theorem \ref{6.1N} allows us to reduce the study of truncated Toeplitz operators on arbitrary model spaces of dimension $n$ to that of a truncated Toeplitz operator on $\K_{z^n}$, as in the following example. 

\begin{Example}\label{6.5NN}
Let $\alpha$ be the degree two Blaschke product from Example \ref{6.3} and consider the question of invertibility of the operator $A_{\phi}^{\alpha}$, where 
$$\phi(z) = \frac{(z - \tfrac{1}{2})(z - \tfrac{1}{3}) (z^2 + 1)}{z^2}.$$
Since $\alpha = \bar{a}^{-1} z^2 a$ with $a \in \mathscr{G}\!H^{\infty}$ as in Example \ref{6.3}, we can use Theorem \ref{6.1N} to conclude that $A_{\phi}^{ \alpha}$ is equivalent to $A_{\widetilde{\phi}}^{z^2}$ with 
$$\widetilde{\phi}(z) = 1 + \tfrac{5}{6} z + z^2 h_{+}$$ for some $h_{+} \in H^{\infty}$. Moreover
$$A_{\widetilde{\phi}}^{z^2}  = A_{1 + \frac{5}{6} z}^{z^2}$$ which is invertible \cite[Thm.~4.1]{MR3398735}. Thus, $A_{\phi}^{\alpha}$ is invertible. 
\end{Example}

Also note that \eqref{6.5} provides an equivalence between the inverses of the two operators $A_{\phi}^{\alpha}$ and $A_{\widetilde{\phi}}^{\gamma}$, if they exist. We have the following result which can be obtained from \cite[Thm.~2.4]{MR3398735}. 

\begin{Proposition}\label{6.5A}
Let $\theta$ be an inner function and let $\phi \in L^{\infty}$ be such that 
$$G = \begin{bmatrix}
\bar{\theta} & 0\\
\phi & \theta
\end{bmatrix}$$
admits a Wiener--Hopf factorization of the form $G = G_{-} G_{+}$, with $\overline{G}_{-}^{\,\pm}, G_{+}^{\pm} \in (H^2)_{2 \times 2}$ \cite{MR3398735, MR881386}. Moreover, denote 
$G_{+}^{-1} = [g_{ij}^{+}]$ and $G_{-}^{-1} =  [g_{ij}^{-}]$, $ i, j = 1, 2.$
Then, for all $f \in \K_{\theta}$, 
$$(A_{\phi}^{\theta})^{-1} f = P_{\theta}(g_{11}^{+} P_{+} g_{12}^{-} +  g_{12}^{+} P_{+} g_{22}^{-}) f.$$
\end{Proposition}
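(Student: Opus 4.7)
The plan is to reduce the inversion of $A_\phi^\theta$ to a matrix Riemann--Hilbert problem governed by the symbol $G$, and to read off the formula for $(A_\phi^\theta)^{-1}$ from the Wiener--Hopf factors $G_\pm$. Since the statement is attributed to \cite[Thm.~2.4]{MR3398735}, my write-up would verify that the inversion problem for $A_\phi^\theta$ is equivalent to the vectorial Riemann--Hilbert problem associated with $G$, and then quote or adapt the result from that reference.

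The first step is the reformulation. Given $f \in \K_\theta$, solving $A_\phi^\theta u = f$ for $u \in \K_\theta$ is equivalent to finding $u \in H^2$ and $h_+ \in H^2$, $h_- \in (H^2)^\perp$, $k_- \in (H^2)^\perp$ satisfying simultaneously $\bar\theta u = k_-$ (which encodes $u \in \K_\theta$) and $\phi u - \theta h_+ = f + h_-$ (which encodes $P_\theta(\phi u) = f$). Assembling these two equations into a single $2\times 2$ system produces exactly
\begin{equation*}
G \begin{pmatrix} u \\ -h_+ \end{pmatrix} = \begin{pmatrix} k_- \\ f + h_- \end{pmatrix},
\qquad G = \begin{bmatrix} \bar\theta & 0 \\ \phi & \theta \end{bmatrix}.
\end{equation*}
This is exactly the matrix $G$ in the statement, so the inversion of $A_\phi^\theta$ is equivalent to the vectorial Riemann--Hilbert problem with coefficient $G$ and a right-hand side whose first component and the $h_-$-part of the second component belong to $(H^2)^\perp$.

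The second step uses the factorization $G = G_- G_+$ to decouple the analytic and anti-analytic parts. Multiplying by $G_-^{-1}$ gives $G_+ \binom{u}{-h_+} = G_-^{-1} \binom{k_-}{f+h_-}$. Applying the Riesz projection $P_+$ to both sides, the left-hand side is unchanged (as $G_+$ and $\binom{u}{-h_+}$ lie on the analytic side), whereas on the right the contributions coming from $k_-$ and $h_-$ fall into $(H^2)^\perp$ because multiplication by the entries of $G_-^{-1}$ preserves the anti-analytic subspace, and hence these terms vanish under $P_+$. What remains is $G_+ \binom{u}{-h_+} = P_+ \bigl[G_-^{-1} \binom{0}{f}\bigr]$, so $\binom{u}{-h_+} = G_+^{-1} P_+ \bigl[G_-^{-1} \binom{0}{f}\bigr]$. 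Extracting the first component and writing out the matrix product yields $u = g_{11}^+ P_+(g_{12}^- f) + g_{12}^+ P_+(g_{22}^- f)$, and since $u$ is forced to lie in $\K_\theta$ one may safely apply $P_\theta$ to the right-hand side, producing the formula in the statement.

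The main obstacle in a complete proof is the analytical justification of the formal manipulations with $P_+$ and $G_\pm^{\pm 1}$: one must verify that the entries of $G_-^{-1}$ indeed map the anti-analytic data into $(H^2)^\perp$ after projection, which is where the precise hypothesis $\overline{G}_-^{\pm}, G_+^{\pm} \in (H^2)_{2\times 2}$ (and the corresponding boundedness of $T_G$ on $(H^2)^2$) enters; this is the content of the Wiener--Hopf/corona machinery developed in \cite{MR3398735, MR881386}, and invoking it turns the heuristic calculation above into a rigorous derivation of $(A_\phi^\theta)^{-1}$.
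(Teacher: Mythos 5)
The paper offers no proof of this proposition --- it is simply deferred to \cite[Thm.~2.4]{MR3398735} --- so your reconstruction is being compared against the intended route rather than a written argument; and indeed your route is the right one. The reformulation of $A_{\phi}^{\theta}u=f$ as the vectorial Riemann--Hilbert problem with coefficient $G=\begin{bmatrix}\bar{\theta}&0\\ \phi&\theta\end{bmatrix}$ is exactly the device the authors themselves use later (compare the system \eqref{7.10} in \S 7 for dual truncated Toeplitz operators), and your bookkeeping is correct: the first row encodes $u\in\K_{\theta}$ via $\bar{\theta}u=k_{-}$, the second encodes $P_{\theta}(\phi u)=f$ via $\phi u-\theta h_{+}=f+h_{-}$, and after multiplying by $G_{-}^{-1}$ and projecting with $P_{+}$ the terms carrying $k_{-}$ and $h_{-}$ are killed because the entries of $G_{-}^{-1}$ lie in $\overline{H^{2}}$ and so preserve $\overline{zH^{1}}$. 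Extracting the first component of $G_{+}^{-1}P_{+}\bigl[G_{-}^{-1}\binom{0}{f}\bigr]$ gives precisely $g_{11}^{+}P_{+}(g_{12}^{-}f)+g_{12}^{+}P_{+}(g_{22}^{-}f)$, and since $u\in\K_{\theta}$ the outer $P_{\theta}$ is harmless.

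One point worth making explicit: as written, your argument only proves that \emph{if} a solution $u\in\K_{\theta}$ exists then it is given by the stated formula (injectivity plus the explicit form). The proposition implicitly asserts invertibility of $A_{\phi}^{\theta}$, so a complete proof must also run the argument backwards --- define $u$ by the formula, check that $u\in\K_{\theta}$ and that $\phi u-f$ decomposes as $h_{-}+\theta h_{+}$ --- or else invoke the cited theorem for the invertibility itself. This, together with the analytic justification of applying $P_{+}$ to products of $H^{2}$ and $(H^{2})^{\perp}$ data (which you correctly flag), is exactly what the hypothesis $\overline{G}_{-}^{\,\pm},G_{+}^{\pm}\in(H^{2})_{2\times 2}$ and the machinery of \cite{MR3398735} are there to supply.
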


When $\theta = z^n$, necessary and sufficient conditions for $A_{\phi}^{\theta}$ to be invertible were obtained in \cite{MR3398735} and the factorization of $G$ itself can be similarly determined in explicit form by solving the relatively simple Riemann--Hilbert problem $G G_{+}^{-1} = G_{-}$ ($G$ is a $2 \times 2$ triangular matrix of polynomials with diagonal entries $\bar{z}^n$ and $z^n$). Proposition \ref{6.5A} provides an explicit and simple expression for the inverse of $A_{\widetilde{\phi}}^{z^n}$ and from \eqref{6.5} we can then obtain an expression for the inverse of $A_{\phi}^{\alpha}$ (which does not depend on the basis that one chooses for $\K_{\alpha}$).


As we have seen in Example \ref{FBPEx},  for two finite Blaschke products $\alpha$ and $\gamma$ of equal degree, there is an $a \in \mathscr{G} H^{\infty}$ such that $a \K_{\gamma} = \K_{\alpha}$. This allows us to prove the following. 

\begin{Proposition}\label{Prandnksd}
Let $\alpha$ and $\gamma$ be two finite Blaschke products of equal degree. Then two truncated Toeplitz operators $A^{\alpha}_{\phi}$ and $A^{\gamma}_{\psi}$ are equivalent if and only if they have the same rank. 
\end{Proposition}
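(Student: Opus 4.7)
The forward implication is immediate. If $A^{\alpha}_{\phi} = E A^{\gamma}_{\psi} F$ with $E, F$ bounded and invertible, then $E$ is injective with full range and $F$ is surjective, so $\operatorname{Ran}(A^{\alpha}_{\phi}) = E \bigl( A^{\gamma}_{\psi}(\operatorname{Ran} F) \bigr) = E \bigl( \operatorname{Ran}(A^{\gamma}_{\psi}) \bigr)$, whence $\operatorname{rank}(A^{\alpha}_{\phi}) = \operatorname{rank}(A^{\gamma}_{\psi})$.

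For the converse, set $n := \deg\alpha = \deg\gamma$, so that $\dim \K_{\alpha} = \dim \K_{\gamma} = n$. My plan is to first move both operators onto a common model space by a multiplier-induced equivalence, and then appeal to a classical matrix fact. By the paragraph preceding the statement (ultimately via Example \ref{FBPEx} combined with Proposition \ref{2.2A}), there exists $a \in \mathscr{G}H^{\infty}$ with $a\K_{\gamma} = \K_{\alpha}$, which corresponds to the factorization $\alpha = \bar{a}^{-1} \gamma a$. Theorem \ref{6.1N} then yields that $A^{\alpha}_{\phi}$ is equivalent to $A^{\gamma}_{\phi |a|^{2}}$, an operator on $\K_{\gamma}$. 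Applying the forward direction just proved, $\operatorname{rank}(A^{\gamma}_{\phi|a|^{2}}) = \operatorname{rank}(A^{\alpha}_{\phi}) = \operatorname{rank}(A^{\gamma}_{\psi})$, so the problem is reduced to comparing two operators on the same $n$-dimensional Hilbert space that share a common rank.

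At this stage I would invoke the standard linear-algebra fact: any rank-$r$ operator on $\C^{n}$ admits a factorization $T = P D_{r} Q$, where $P, Q$ are invertible and $D_{r}$ is the diagonal operator with $r$ ones followed by zeros (obtainable from the singular value decomposition or from invertible row/column operations producing a Smith-type normal form). For two operators $T_{1}, T_{2}$ of the same rank $r$, writing $T_{i} = P_{i} D_{r} Q_{i}$ and cancelling yields $T_{1} = (P_{1} P_{2}^{-1}) \, T_{2} \, (Q_{2}^{-1} Q_{1})$, which is an equivalence in the sense of \eqref{endnfddnfndffff}. Chaining this with the earlier equivalence via transitivity gives $A^{\alpha}_{\phi} \sim A^{\gamma}_{\psi}$, completing the proof.

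I do not anticipate a serious obstacle. The one conceptual point worth highlighting is that the notion of equivalence in \eqref{endnfddnfndffff} demands only boundedly invertible intertwiners, with no structural requirement (no multiplier form, no Toeplitz form). This is exactly what lets the last step proceed through purely linear-algebraic factorizations on the finite-dimensional spaces $\K_{\alpha}, \K_{\gamma}$, bypassing the more delicate question of whether two truncated Toeplitz operators of the same rank are equivalent through operators that are themselves truncated Toeplitz.
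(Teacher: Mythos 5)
Your proof is correct and follows essentially the same route as the paper: the forward direction is the trivial rank-preservation under invertible intertwiners, and the converse reduces to the linear-algebra fact that two finite-rank-$r$ operators between $n$-dimensional spaces factor through a common normal form $P D_r Q$ (the paper phrases this via the singular value decomposition). Your intermediate step of first transporting $A^{\alpha}_{\phi}$ onto $\K_{\gamma}$ via the multiplier equivalence of Theorem \ref{6.1N} is valid but not actually needed, since the SVD argument already applies to operators between two different spaces of the same finite dimension --- which is how the paper proceeds.
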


\begin{proof}
This result will follow from the following linear algebra result. Two $n \times n$ matrices $A$ and $B$ are equivalent (i.e., there are $n \times n$ invertible matrices $E$ and $F$ with $A = E B F$) if and only if they have the same rank. Indeed, if $A$ is equivalent to $B$ then clearly $A$ and $B$ have the same rank. Conversely, if $A$ and $B$ have the same rank $r \leq n$ then by the singular value decomposition there are $n \times n$ unitary matrices $U, V, W, Q$ such that 
$A = U \Sigma_{A} V$ and $B = W \Sigma_{B} Q,$
where $\Sigma_{A}, \Sigma_{B}$ are the $n \times n$ diagonal matrices 
$$\Sigma_{A} = \operatorname{diag}(\sigma_1, \sigma_2, \ldots, \sigma_r, 0, 0, \ldots 0),$$
$$\Sigma_{B} = \operatorname{diag}(\lambda_1, \lambda_2, \ldots, \lambda_r, 0, 0, \ldots, 0).$$
Of course, $\sigma_j$ and $\lambda_j$ are nonzero for all $1 \leq j \leq r$.
If 
$$D = \operatorname{diag}\Big(\frac{\sigma_1}{\lambda_1}, \frac{\sigma_2}{\lambda_2}, \ldots, \frac{\sigma_r}{\lambda_r}, 1, 1, \ldots, 1\Big),$$
then $D$ is invertible and 
\begin{align*}
A  &= U \Sigma_{A} V\\
 & = U D \Sigma_{B} V\\
& = U D W^{*} B Q^{*} V\\
 &= (U D W^{*}) B (Q^{*} V)\\
 &= E B F
\end{align*} and so $A$ is equivalent to $B$. 
\end{proof}
 
The proof of Proposition \ref{Prandnksd} says that {\em any} matrix is equivalent to a Toeplitz matrix of equal rank. In fact, one can say more. A result from \cite[Thm.~6.2]{MR2648079} says that any square matrix is similar to some  truncated Toeplitz operator on some finite dimensional model space. One can only go so far with these type of results since small matrices are similar to Toeplitz matrices while large ones are generally not \cite{MR1839449}.

We end with an application to  the essential spectrum of two equivalent truncated Toeplitz operators. For a bounded operator $T$ on a Hilbert space $\H$, recall that the {\em essential spectrum} of $T$, denoted by $\sigma_{e}(T)$, is the set of all $\lambda \in \C$ such that the operator $T - \lambda I$ is not Fredholm. 

For an inner function $\alpha$, let 
$$\Sigma(\alpha) := \Big\{\xi \in \T: \varliminf_{z \to \xi} |\alpha(z)| = 0\Big\}$$ denote the {\em boundary spectrum} of the inner function $\alpha$. One can show that $\Sigma(\alpha)$ consists of  the accumulation points of the zeros of $\alpha$ along with the support of the associated singular measure for $\alpha$ in its Riesz factorization \cite[p.~152]{MR3526203}. Another known result \cite[p.~204]{MR3526203} is that 
\begin{equation}\label{77yHHGGTTT555}
\Sigma(\alpha) = \sigma_{e}(A^{\alpha}_{z}).
\end{equation}

Let  $\mathscr{R}$ denote the set of rational functions whose poles are not in $\T$. Also define  
$$\mathscr{R}_{+} := P_{+} \mathscr{R} = \mathscr{R} \cap H^{\infty} \quad \mbox{and} \quad \mathscr{R}_{-} := P_{-} \mathscr{R} =  \mathscr{R} \cap \overline{z H^{\infty}}.$$
As used before, $P_{+}$ is the Riesz projection from $L^2$ onto $H^2$ and $P_{-} = I - P_{+}$.
 If $\gamma$ is inner and also satisfies  $\gamma = \bar{a}^{-1} \alpha a$ for some $a \in \mathscr{G}\!H^{\infty}$, then by Theorem \ref{6.1A}, the operators 
$A^{\alpha}_{R - \lambda}$ and $A^{\gamma}_{|a|^2 (R - \lambda)}$ are equivalent for all $\lambda \in \C$. 
Thus,  $A^{\alpha}_{R - \lambda}$ is Fredholm if and only if the operator $A^{\gamma}_{|a|^2 (R - \lambda)}$ is Fredholm. Using the notation $P_{\gamma H^2}$ for the orthogonal projection of 
$L^2$
onto $\gamma H^2$, we see that 
\begin{align*}
A^{\gamma}_{|a|^2 (R - \lambda)} & = 
P_{\gamma} |a|^2(R - \lambda) P_{\gamma}|_{\K_{\gamma}}\\
& = \Big(P_{\gamma} (R - \lambda) P_{\gamma} \bar{a} P_{\alpha} a P_{\gamma} + P_{\gamma} (R - \lambda) P_{-} (\bar{a} P_{\alpha} a P_{\gamma})\\
& \qquad  + P_{\gamma} (R - \lambda) P_{\gamma H^2} (\bar{a} P_{\alpha} a P_{\gamma})\Big)|_{\K_{\gamma}}\\
& = A_{R - \lambda}^{\gamma} A_{\bar{a}}^{\alpha, \gamma} A_{a}^{\gamma, \alpha} + P_{\gamma} (R - \lambda) P_{-} (\bar{a} P_{\alpha} a P_{\gamma})|_{\K_{\gamma}}\\
& \qquad  + P_{\gamma} (R - \lambda) P_{\gamma H^2} (\bar{\alpha} P_{\alpha} \alpha P_{\gamma})|_{\K_{\gamma}},
\end{align*}
where the last two operators above are finite rank (see Lemma \ref{finiteranksksksk} below). Since the operators $A_{\bar{a}}^{\alpha, \gamma}$ and $A_{a}^{\gamma, \alpha}$ are invertible, we obtain the following.

\begin{Theorem}\label{rationalsymbolskks}
With the assumptions above,
$\sigma_{e}(A^{\alpha}_{R}) = \sigma_{e}(A^{\gamma}_{R})$.
\end{Theorem}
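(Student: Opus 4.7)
The plan is to prove the equality of essential spectra by showing that, for every $\lambda \in \mathbb{C}$, the operator $A^{\alpha}_{R-\lambda}$ is Fredholm if and only if $A^{\gamma}_{R-\lambda}$ is Fredholm. The statement $\sigma_{e}(A^{\alpha}_{R}) = \sigma_{e}(A^{\gamma}_{R})$ is equivalent to this, because $P_{\alpha}$ acts as the identity on $\K_{\alpha}$ and hence $A^{\alpha}_{R} - \lambda I_{\K_{\alpha}} = A^{\alpha}_{R-\lambda}$, and similarly for $\gamma$.

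First, I would quote the equivalence set up just before the theorem: by Theorem \ref{6.1A} applied to the factorization $\gamma = \bar{a}^{-1}\alpha a$ with $a \in \mathscr{G}\!H^{\infty}$, the asymmetric truncated Toeplitz operators $A^{\alpha}_{R-\lambda}$ and $A^{\gamma}_{|a|^{2}(R-\lambda)}$ are equivalent via bounded invertible generalized Toeplitz operators. By Proposition \ref{3.1A} with both intertwining factors invertible, equivalent operators are simultaneously Fredholm, so the problem reduces to comparing $A^{\gamma}_{|a|^{2}(R-\lambda)}$ with $A^{\gamma}_{R-\lambda}$.

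Second, I would use the decomposition displayed just before the theorem, which reads
$$A^{\gamma}_{|a|^{2}(R-\lambda)} = A^{\gamma}_{R-\lambda}\,A^{\alpha,\gamma}_{\bar{a}}\,A^{\gamma,\alpha}_{a} + K_{\lambda},$$
where $K_{\lambda}$ is the sum of the two error terms involving $P_{-}$ and $P_{\gamma H^{2}}$. Because $a \in \mathscr{G}\!H^{\infty}$ realizes $\K_{\gamma} = a\K_{\alpha}$, Proposition \ref{2.10AA} ensures that $A^{\alpha,\gamma}_{\bar{a}}$ and $A^{\gamma,\alpha}_{a}$ are both invertible, so composition with them preserves Fredholmness. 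The term $K_{\lambda}$ is finite-rank by Lemma \ref{finiteranksksksk}, and finite-rank (hence compact) perturbations also preserve Fredholmness. Therefore $A^{\gamma}_{|a|^{2}(R-\lambda)}$ is Fredholm if and only if $A^{\gamma}_{R-\lambda}$ is Fredholm. Chaining this with the first step yields the desired biconditional and hence the equality of essential spectra.

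The main obstacle is the finite-rank assertion, Lemma \ref{finiteranksksksk}. This is a Hankel-type phenomenon: splitting $R = R_{+} + R_{-}$ with $R_{\pm} \in \mathscr{R}_{\pm}$, the operators $P_{\gamma}(R-\lambda)P_{-}$ and $P_{\gamma}(R-\lambda)P_{\gamma H^{2}}$ are essentially mixings between orthogonal pieces of the decomposition $L^{2} = \K_{\gamma} \oplus \gamma H^{2} \oplus (H^{2})^{\perp}$, with symbols that reduce modulo $H^{\infty}$ and $\overline{H^{\infty}}$ to rational functions. Kronecker's classical theorem identifies rational Hankel symbols with finite rank, and its analogues for the asymmetric truncated setting supply the desired finite-rank bound for $K_{\lambda}$. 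Once Lemma \ref{finiteranksksksk} is in hand, the rest of the argument is a short Fredholm bookkeeping along the lines above.
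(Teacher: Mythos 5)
Your proposal is correct and follows essentially the same route as the paper: reduce to $A^{\gamma}_{|a|^{2}(R-\lambda)}$ via the equivalence from Theorem \ref{6.1A}, peel off the invertible factors $A^{\alpha,\gamma}_{\bar a}$ and $A^{\gamma,\alpha}_{a}$ from the decomposition, and absorb the remaining terms as finite-rank (Kronecker) perturbations, which preserve Fredholmness. The only caveat is that Proposition \ref{2.10AA} applied to $\K_{\gamma}=a\K_{\alpha}$ literally gives invertibility of $A^{\alpha,\gamma}_{a}$ and $A^{\gamma,\alpha}_{\bar a}$ (and their inverses), not of $A^{\gamma,\alpha}_{a}$ and $A^{\alpha,\gamma}_{\bar a}$; the latter are still invertible (e.g.\ $A^{\gamma,\alpha}_{a}=A^{\alpha}_{a^{2}}\,M_{a^{-1}}|_{\K_{\gamma}}$ with $A^{\alpha}_{a^{2}}$ invertible since $a^{2}\in\mathscr{G}\!H^{\infty}$), but that half-line of justification should be supplied.
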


As mentioned above, the needed ingredient to complete the proof of Theorem \ref{rationalsymbolskks} is the following lemma.

\begin{Lemma}\label{finiteranksksksk}
If $R \in \mathscr{R}$, then $P_{\gamma} R P_{-}$ and $P_{\gamma} R P_{\gamma H^2}$ are finite rank operators. 
\end{Lemma}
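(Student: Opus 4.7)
The plan is to decompose $R = R_+ + R_-$ with $R_+ \in \mathscr{R}_+ \subset H^\infty$ and $R_- \in \mathscr{R}_- \subset \overline{zH^\infty}$, and to exploit two standard tools: first, since $\K_\gamma \subset H^2$, one has $P_\gamma|_{(H^2)^\perp} = 0$, and hence $P_\gamma = P_\gamma P_+$; second, Kronecker's theorem, which asserts that a Hankel operator $H_\phi$ has finite rank precisely when $P_-\phi$ is a rational function. The Brown--Halmos identity $T_\phi T_\psi = T_{\phi\psi} - H_{\bar\phi}^{*} H_\psi$ will then reduce both claims to finite-rank estimates on Hankel operators with rational symbols.

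For $P_\gamma R P_-$, the first step is to observe that $R_- \cdot (H^2)^\perp \subset (H^2)^\perp$: the product of two functions with strictly negative Fourier spectra still has strictly negative spectrum, so $P_+ R_- P_- = 0$. Therefore
\begin{equation*}
P_\gamma R P_- \;=\; P_\gamma P_+ R P_- \;=\; P_\gamma \circ \bigl(P_+ R_+ P_-\bigr).
\end{equation*}
Now $P_+ R_+|_{(H^2)^\perp}$ is the adjoint of the Hankel operator $H_{\bar R_+} \colon H^2 \to (H^2)^\perp$, whose symbol $\bar R_+$ is rational on $\T$; so $P_-(\bar R_+)$ is a rational function and by Kronecker's theorem $H_{\bar R_+}$ has finite rank, whence so does $P_\gamma R P_-$.

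For $P_\gamma R P_{\gamma H^2}$, parametrize $P_{\gamma H^2} f = \gamma h$ with $h \in H^2$. Because $R_+ \gamma h = \gamma R_+ h \in \gamma H^2 \perp \K_\gamma$, the $R_+$-contribution vanishes, and only $P_\gamma(R_- \gamma h)$ remains. Using $P_\gamma = P_+ - \gamma P_+ \bar\gamma$ this equals $T_{R_-\gamma} h - \gamma T_{R_-} h$. Applying the Brown--Halmos identity with $\phi = \gamma$ (so that $T_\gamma = M_\gamma$ on $H^2$) and $\psi = R_-$ yields $\gamma T_{R_-} = T_{\gamma R_-} - H_{\bar\gamma}^{*} H_{R_-}$, so
\begin{equation*}
T_{R_-\gamma} h - \gamma T_{R_-} h \;=\; H_{\bar\gamma}^{*} H_{R_-} h.
\end{equation*}
Since $R_- \in \mathscr{R}_-$ is rational and $P_- R_- = R_-$ is rational, Kronecker's theorem makes $H_{R_-}$ of finite rank, and therefore so is $H_{\bar\gamma}^{*} H_{R_-}$, giving the finite rank of $P_\gamma R P_{\gamma H^2}$.

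The main obstacle is the algebraic identification of $P_\gamma(R_- \gamma h)$ with the composition $H_{\bar\gamma}^{*} H_{R_-} h$; once this telescoping via Brown--Halmos is in place, the conclusion follows immediately from Kronecker's theorem. The remaining steps (verifying $P_+ R_- P_- = 0$, $P_\gamma R_+ P_{\gamma H^2} = 0$, and $P_\gamma|_{(H^2)^\perp} = 0$) are routine consequences of the Fourier structure of the two halves of $R$ and of the inclusion $\K_\gamma \subset H^2$.
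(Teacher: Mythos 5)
Your proof is correct and follows essentially the same route as the paper: split $R=R_++R_-$, use $P_\gamma=P_\gamma P_+$ and $R_+\gamma H^2\subset\gamma H^2\perp\K_\gamma$ to kill the harmless half in each case, and apply Kronecker's theorem to the surviving Hankel operators $H_{\bar R_+}$ and $H_{R_-}$. The only cosmetic difference is that for the second operator you reach $H_{\bar\gamma}^{*}H_{R_-}$ via the Brown--Halmos commutator identity, whereas the paper writes the same operator directly as $P_\gamma\,\gamma\,(P_-R_-P_+)\,\bar\gamma P_+$; both reduce to the finite rank of $H_{R_-}$.
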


\begin{proof}
We have $R = R_{+} + R_{-}$ with $R_{\pm} \in \mathscr{R}_{\pm}$. Then 
$$P_{\gamma} R P_{-} = P_{\gamma} (R_{-} + R_{+}) P_{-} = P_{\gamma} (P_{+} R_{+} P_{-})$$
and 
\begin{align*}
P_{\gamma} R P_{\gamma H^2}  &= P_{\gamma} (R_{-} + R_{+}) \gamma P_{+} \bar{\gamma} P_{+}\\
 &= P_{\gamma} R \gamma P_{+} \bar{\gamma} P_{+}\\
 &= P_{\gamma} \gamma (P_{-} R_{-} P_{+}) \bar{\gamma} P_{+}.
\end{align*}
By Kronecker's theorem on finite rank Hankel operators\ \cite{MR1630646}, it follows that $P_{\gamma} R_{+} P_{-}$ and $P_{\gamma} R_{-} P_{\gamma H^2}$ are finite rank operators. 
\end{proof}

Theorem \ref{rationalsymbolskks} along with \eqref{77yHHGGTTT555} gives another proof of an observation of Crofoot \cite[Thm.~14]{MR1313454}:
A necessary condition for $\alpha$ to be equivalent to $\gamma$ is that $\Sigma(\alpha) = \Sigma(\gamma)$.

\section{Dual truncated Toeplitz operators}\label{DTTO07}

This section formulates a version of Theorem \ref{2.13A} when the generalized Toeplitz operator becomes a dual truncated Toeplitz operator. If $\alpha$ is inner, recall from our discussion at the end of \S \ref{section2} that $P_{\alpha}$ denotes the orthogonal projection of $L^2$ onto the model space $\K_{\alpha}$. Then $Q_{\alpha} := I - P_{\alpha}$ is the orthogonal projection of $L^2$ onto 
$$\K_{\alpha}^{\perp} = \alpha H^{2} \oplus (H^{2})^{\perp}.$$ For $\phi \in L^{\infty}$, define 
$$D_{\phi}^{\gamma, \alpha}: \K_{\gamma}^{\perp} \to \K_{\alpha}^{\perp}, \quad D_{\phi}^{\gamma, \alpha}f := Q_{\alpha} (\phi f).$$ When $\gamma = \alpha$ we write $D^{\gamma, \gamma}_{\phi} = D_{\phi}^{\gamma}$. The operator $D_{\phi}^{\gamma}$ is a {\em dual truncated Toeplitz operator} on $\K_{\gamma}$  with symbol $\phi$, while $D_{\phi}^{\gamma, \alpha}$ is an {\em asymmetric dual truncated Toeplitz operator} from $\K_{\gamma}$ to $\K_{\alpha}$ with symbol $\phi$. These operators have received considerable attention recently  \cite{MCKB, MR3759573, BHal, Mbbhbbh}.  Unlike the criterion in \eqref{Agammazerooooo} and \eqref{Agammazerooooo1} for an asymmetric truncated Toeplitz operator to be the zero operator, we have the following (see \cite[Prop.~2.1]{MR3759573})
\begin{equation}\label{DTTOaerooo}
D^{\gamma, \alpha}_{\phi} \equiv 0 \iff \phi = 0.
\end{equation}
Thus, the symbol of a dual truncated Toeplitz operator is unique.

 Some results concerning the equivalence of dual truncated Toeplitz operators are known  when $\phi(z) = z$ and thus $D_{z}^{\alpha}$ is the dual of the compressed shift \cite{CamaraRoss}. For example, $D_{z}^{\alpha}$ is unitarily equivalent to $D_{z}^{\gamma}$ if and only if $|\alpha(0)| = |\gamma(0)|$. When $\alpha(0) \not = 0$ and $\gamma(0) \not = 0$, then $D_{z}^{\alpha}$ is similar to $D_{z}^{\gamma}$ and they are both similar to the bilateral shift on $L^2$ (i.e., the operator $f(\xi) \mapsto \xi f(\xi)$). When $\alpha(0) = 0$, $D_{z}^{\alpha}$ is unitarily equivalent to $S \oplus S^{*}$ on $H^2 \oplus H^2$, where $S$ is the unilateral shift. From here, one can characterize the invariant subspaces of $D_{z}^{\alpha}$ \cite{DanT}.

A version of Theorem \ref{2.13A} for dual truncated Toeplitz operators is the following. 

\begin{Theorem}\label{sevenpoiertbn77y}
Let $\theta$, $\eta, \alpha, \gamma$ be inner functions and $a_1, a_2 \in \mathscr{G} \overline{H^{\infty}}$ with 
$\theta = a_1 \eta \bar{a}_{1}^{-1}$ and $\alpha = a_{2} \gamma \bar{a}_{2}^{-1}$.
Then for any $\phi \in L^{\infty}$, the operator 
$D_{\phi}^{\theta, \alpha}$ is equivalent to $D_{\widetilde{\phi}}^{\eta, \gamma}$, where 
$\widetilde{\phi} = \bar{a}_2 \phi a_1$.  Moreover, 
 $D_{\bar{a}_{2}^{-1}}^{\gamma, \alpha}$ and $D_{a_{1}^{-1}}^{\theta, \eta}$ are bounded invertible operators such that 
$$D_{\phi}^{\theta, \alpha} = D_{\bar{a}_{2}^{-1}}^{\gamma, \alpha} D_{\widetilde{\phi}}^{\eta, \gamma} D_{a_{1}^{-1}}^{\theta, \eta}.$$
\end{Theorem}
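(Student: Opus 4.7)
The plan is to derive Theorem \ref{sevenpoiertbn77y} as a direct instance of the master equivalence Theorem \ref{2.13A}, applied with the ambient subspaces taken to be orthogonal complements of model spaces, and with the invertible multipliers $a_1^{-1}$ and $a_2^{-1}$ playing the role of the ``$a_1$'' and ``$a_2$'' appearing in the statement of Theorem \ref{2.13A}. Concretely, I would set $\K_1 = \K_\theta^\perp$, $\K_2 = \K_\alpha^\perp$, $\H_1 = \K_\eta^\perp$, $\H_2 = \K_\gamma^\perp$, so that the generalized Toeplitz operators between these four subspaces are precisely the dual (asymmetric) truncated Toeplitz operators $D_{(\cdot)}^{(\cdot),(\cdot)}$. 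Then Theorem \ref{2.13A} outputs the claimed factorization verbatim.

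The one substantive step is to verify the multiplier identities $a_1 \K_\eta^\perp = \K_\theta^\perp$ and $a_2 \K_\gamma^\perp = \K_\alpha^\perp$, which I would establish via Proposition \ref{2.6A}. Taking $b = a_1$ and identifying the ``$\alpha$'' and ``$\theta$'' of that proposition with our $\eta$ and $\theta$, one needs $a_1 \in \mathscr{G}\overline{H^\infty}$ (given) together with an identity $a_1 = \theta\bar\eta h$ for some $h \in \mathscr{G}H^\infty$. Multiplying the hypothesis $\theta = a_1 \eta \bar{a}_1^{-1}$ by $\bar\eta$ and using $\eta\bar\eta = 1$ on $\T$ gives $\theta\bar\eta = a_1 \bar{a}_1^{-1}$, so that
\[
a_1 = \theta\,\bar\eta\,\bar{a}_1,
\]
and the choice $h = \bar{a}_1$ works, because $a_1 \in \mathscr{G}\overline{H^\infty}$ forces $\bar{a}_1 \in \mathscr{G}H^\infty$. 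The same calculation with subscript $2$ yields $a_2 \K_\gamma^\perp = \K_\alpha^\perp$; inverting both relations gives $a_1^{-1} \K_\theta^\perp = \K_\eta^\perp$ and $a_2^{-1} \K_\alpha^\perp = \K_\gamma^\perp$, which are exactly the forms needed to feed into Theorem \ref{2.13A}.

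With the multiplier identities in place, Theorem \ref{2.13A} applied with multipliers $a_1^{-1}$ and $a_2^{-1}$ says that a symbol pair $(\phi,\widetilde\phi)$ satisfying $\phi = \overline{a_2^{-1}}\,\widetilde\phi\,a_1^{-1} = \bar{a}_2^{-1}\,\widetilde\phi\,a_1^{-1}$, equivalently $\widetilde\phi = \bar{a}_2 \phi a_1$, produces the factorization
\[
T_{\phi}^{\K_\theta^\perp,\K_\alpha^\perp} = T_{\bar{a}_2^{-1}}^{\K_\gamma^\perp,\K_\alpha^\perp}\, T_{\widetilde\phi}^{\K_\eta^\perp,\K_\gamma^\perp}\, T_{a_1^{-1}}^{\K_\theta^\perp,\K_\eta^\perp},
\]
which by the very definition of dual truncated Toeplitz operators is precisely $D_{\phi}^{\theta,\alpha} = D_{\bar{a}_2^{-1}}^{\gamma,\alpha}\, D_{\widetilde\phi}^{\eta,\gamma}\, D_{a_1^{-1}}^{\theta,\eta}$. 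The bounded invertibility of the two outer factors is an immediate instance of Proposition \ref{2.10AA} applied to the multipliers $a_2^{-1}$ (from $\K_\alpha^\perp$ onto $\K_\gamma^\perp$) and $a_1^{-1}$ (from $\K_\theta^\perp$ onto $\K_\eta^\perp$).

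The main thing to watch is bookkeeping: Theorem \ref{2.13A} must be invoked with its multipliers taken to be the inverses $a_1^{-1}, a_2^{-1}$ rather than $a_1, a_2$ themselves, so that the ``inner'' operator lands on $\K_\eta^\perp \to \K_\gamma^\perp$ and the outer two factors are dual truncated Toeplitz operators in the sense of the statement. Beyond that, there is no real obstacle: the hypothesis $a_i \in \mathscr{G}\overline{H^\infty}$ is tailor-made to supply the ``$h \in \mathscr{G}H^\infty$'' required by Proposition \ref{2.6A}(c), which is the one nontrivial algebraic check.
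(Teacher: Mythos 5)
Your proposal is correct and follows exactly the route the paper intends: the paper states Theorem \ref{sevenpoiertbn77y} as a direct reformulation of Theorem \ref{2.13A} for the spaces $\K_{\theta}^{\perp}, \K_{\eta}^{\perp}, \K_{\alpha}^{\perp}, \K_{\gamma}^{\perp}$, with the multiplier identities $a_1^{-1}\K_{\theta}^{\perp} = \K_{\eta}^{\perp}$ and $a_2^{-1}\K_{\alpha}^{\perp} = \K_{\gamma}^{\perp}$ supplied by Proposition \ref{2.6A} (your computation $a_1 = \theta\bar{\eta}\bar{a}_1$ with $h = \bar{a}_1 \in \mathscr{G}\!H^{\infty}$ is precisely the normalized form of condition (c) there), and invertibility of the outer factors from Proposition \ref{2.10AA}. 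The bookkeeping with the inverted multipliers is handled correctly, so nothing further is needed.
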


\begin{Corollary}\label{sofhjgdfger4t5yhgfd7}
For inner functions $\gamma$, $\alpha$ and $a \in \mathscr{G} \overline{H^{\infty}}$ with 
$\alpha = a \gamma \bar{a}^{-1}$,
 let $\phi \in L^{\infty}$ and $\widetilde{\phi} = |a|^2 \phi$. Then 
$D^{\alpha}_{\phi} = (D^{\alpha, \gamma}_{a^{-1}})^{*} D^{\gamma}_{\widetilde{\phi}} D_{a^{-1}}^{\alpha, \gamma}.$
\end{Corollary}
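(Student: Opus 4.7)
The plan is to obtain this corollary as a direct specialization of Theorem \ref{sevenpoiertbn77y}, followed by an adjoint identification. Specifically, take $\theta = \alpha$, $\eta = \gamma$, $a_1 = a_2 = a$. The hypothesis $\alpha = a\gamma\bar{a}^{-1}$ of the corollary is precisely what both of the relations $\theta = a_1\eta\bar{a}_1^{-1}$ and $\alpha = a_2\gamma\bar{a}_2^{-1}$ reduce to, and the symbol $\widetilde{\phi} = \bar{a}_2\phi a_1$ in the theorem becomes $\bar{a}\phi a = |a|^2\phi$, matching the definition in the corollary.

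Applying Theorem \ref{sevenpoiertbn77y} verbatim then yields
\[
D_{\phi}^{\alpha} = D_{\bar{a}^{-1}}^{\gamma,\alpha}\, D_{\widetilde{\phi}}^{\gamma}\, D_{a^{-1}}^{\alpha,\gamma}.
\]
It remains to identify $D_{\bar{a}^{-1}}^{\gamma,\alpha}$ with $(D_{a^{-1}}^{\alpha,\gamma})^{*}$. For $f \in \K_{\alpha}^{\perp}$ and $g \in \K_{\gamma}^{\perp}$, since $Q_{\gamma}$ and $Q_{\alpha}$ act as the identity on $\K_{\gamma}^{\perp}$ and $\K_{\alpha}^{\perp}$ respectively, a short calculation gives
\[
\langle D_{a^{-1}}^{\alpha,\gamma} f,\, g\rangle = \langle Q_{\gamma}(a^{-1}f),\, g\rangle = \langle a^{-1}f,\, g\rangle = \langle f,\, \bar{a}^{-1}g\rangle = \langle f,\, Q_{\alpha}(\bar{a}^{-1}g)\rangle = \langle f,\, D_{\bar{a}^{-1}}^{\gamma,\alpha} g\rangle,
\]
so $(D_{a^{-1}}^{\alpha,\gamma})^{*} = D_{\bar{a}^{-1}}^{\gamma,\alpha}$. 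Substituting this into the factorization above produces the claimed formula
\[
D_{\phi}^{\alpha} = (D_{a^{-1}}^{\alpha,\gamma})^{*}\, D_{\widetilde{\phi}}^{\gamma}\, D_{a^{-1}}^{\alpha,\gamma}.
\]

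There is essentially no obstacle here, since the work has already been done in Theorem \ref{sevenpoiertbn77y}; the only nontrivial verification is the adjoint identity, which is a one-line consequence of the definition of $D^{\gamma,\alpha}_{\phi}$ as a compression of multiplication. One subtle point to double-check is that $a \in \mathscr{G}\overline{H^{\infty}}$ ensures $a^{-1} \in \overline{H^{\infty}} \subset L^{\infty}$, so that $D_{a^{-1}}^{\alpha,\gamma}$ is a well-defined bounded operator whose invertibility is granted by the theorem (this is what gives the equivalence of $D_{\phi}^{\alpha}$ and $D_{\widetilde{\phi}}^{\gamma}$).
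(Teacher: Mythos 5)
Your proof is correct and is essentially the paper's intended derivation: the corollary is obtained by specializing Theorem \ref{sevenpoiertbn77y} to $\theta=\alpha$, $\eta=\gamma$, $a_1=a_2=a$, and then identifying $D_{\bar{a}^{-1}}^{\gamma,\alpha}=(D_{a^{-1}}^{\alpha,\gamma})^{*}$, which is the same adjoint relation $T_{\bar{a}^{-1}}^{\K,\H}=(T_{a^{-1}}^{\H,\K})^{*}$ already recorded in the proof of Proposition \ref{2.10AA}. No gaps.
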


One can wonder whether the equivalence in the previous result can be an isometric isomorphism, that is to say, $D_{a}^{\gamma, \alpha} = D_{\overline{a}^{-1}}^{\gamma, \alpha}$? Unlike the case with truncated Toeplitz operators discussed earlier (Theorem \ref{6.1N}), similarity in this case can only happen in a trivial way. Indeed, $D_{a}^{\gamma, \alpha} = D_{\overline{a}^{-1}}^{\gamma, \alpha}$ implies that 
$D_{a -\overline{a}^{-1}}^{\gamma, \alpha} \equiv 0$ and so, by \eqref{DTTOaerooo}, $a = \overline{a}^{-1}$. Since $H^{\infty} \cap \overline{H^{\infty}} = \C$, this would imply that $a = \overline{a}^{-1}$ and is a unimodular constant function. 

It follows from Corollary \ref{sofhjgdfger4t5yhgfd7} that 
\begin{equation}\label{7.8}
\ker D^{\theta}_{\phi}  = \bar{a} \ker D_{\widetilde{\phi}}^{\eta}. 
\end{equation}
As an application of this result we study the kernel of a dual truncated Toeplitz operator whose symbol $\phi$ does not belong to $\mathscr{G}\!L^{\infty}$ and thus whose study cannot be reduced to that of a truncated Toeplitz operator with symbol $\phi^{-1}$ \cite[\S 6]{MCKB}. 

We have that 
\begin{equation}\label{7.9}
\ker D^{\theta}_{\phi} = \{f_1 \in \K_{\theta}^{\perp}: \phi f_1 = f_{2+} \in \K_{\theta}\}.
\end{equation}
Noting that $f_1 \in \K_{\theta}^{\perp}$ if and only if $f_{1}  = f_{1-} + \theta f_{1+}$, where $f_{1-} \in (H^2)^{\perp}$ and $f_{1+} \in H^2$; and $f_{2+} \in \K_{\theta}$ if and only if $f_{2+} \in H^2$ and $\bar{\theta} f_{2+} = f_{2-} \in (H^2)^{\perp}$, the equation defining $\ker D_{\phi}^{\theta}$ is equivalent to the matricial Riemann--Hilbert problem 
\begin{equation}\label{7.10}
\begin{bmatrix} \bar{\theta} & 0\\ -1 & \phi \theta \end{bmatrix} \begin{bmatrix} f_{2+}\\ f_{1+}\end{bmatrix} + \begin{bmatrix} -1 & 0\\ 0 & \phi \end{bmatrix} \begin{bmatrix} f_{2-}\\ f_{1-}\end{bmatrix} = \begin{bmatrix} 0 \\ 0
\end{bmatrix}.
\end{equation}

Now let $\theta$ be a finite Blaschke product of degree $n$ and, for any inner function $\alpha$,  define $\phi(z) = \alpha(z) (z - 1)$. From Example \ref{FBPEx} we see that 
$$\theta = \overline{B_{+}^{-1}} z^n B_{+} \quad \mbox{with $B_{+} \in \mathscr{G}\!H^{\infty}$}.$$
Therefore, by Corollary \ref{sofhjgdfger4t5yhgfd7}, 
\begin{equation}\label{7.11}
D_{\phi}^{\theta} = D_{B_{+}}^{z^n, \theta} D_{\widetilde{\phi}}^{z^n} D_{\overline{B_{+}}}^{\theta, z^n}, \quad \mbox{where} \quad  \widetilde{\phi} = \overline{B_{+}^{-1}} \phi B_{+}^{-1},
\end{equation}
and, by \eqref{7.8}, 
\begin{equation}\label{7.12}
\ker D_{\phi}^{\theta} = D^{z^n, \theta}_{\overline{B_{+}^{-1}}} \ker D_{\widetilde{\phi}}^{z^n} = \overline{B_{+}^{-1}} \ker D_{\widetilde{\phi}}^{z^n}.
\end{equation}
Thus we have reduced the problem of determining $\ker D^{\theta}_{\phi}$ to that of determining $ \ker D_{\widetilde{\phi}}^{z^n}$. For this problem, we use \eqref{7.9} with $\theta$ and $\phi$ replaced by $z^n$ and $\widetilde{\phi}$ respectively to obtain 
\begin{equation}\label{7.13}
\bar{z}^n f_{2+} = f_{2-}
\end{equation}
and 
\begin{equation}\label{7.13a}
f_{2+} - B_{+}^{-1} \alpha (z - 1) \overline{B_{+}^{-1}} z^n f_{1+} = B_{+}^{-1} \alpha (z - 1) \overline{B_{+}^{-1}} f_{1-}.
\end{equation}
From  \eqref{7.13} we have that $f_{2+} = p_{n - 1}$, where $p_{n - 1} \in \mathscr{P}_{n - 1}$ (the polynomials of degree at most $n - 1$). Substituting this into the equation in \eqref{7.13a} we see that 
$$p_{n - 1} - B_{+}^{-1} \alpha (z - 1) \overline{B_{+}^{-1}} z^n f_{1+} = B_{+}^{-1} \alpha (z - 1) \overline{B_{+}^{-1}} f_{1-}$$ if and only if 
$$B_{+}^{-1} (z - 1) f_{1+} = \bar{\alpha} \bar{z}^{n} p_{n - 1} \overline{B_{+}} - \bar{\theta} \overline{B_{+}^{-1}} (z - 1) f_{1-}.$$
Moreover, since the left hand side of the last equality belongs to $H^2$ while the right hand side belongs to $\overline{H^2}$, both sides are equal to a constant. Due to the factor of $z - 1$ on the left hand side, this constant must be zero. This means that $f_{1+} = 0$ and
\begin{equation}\label{7.14}
\bar{\alpha} \bar{z}^n p_{n - 1} \overline{B_{+} }= \bar{\theta} \overline{B_{+}^{-1}} (z - 1) f_{1-}.
\end{equation}
It follows from \eqref{7.14} that 
$$\frac{p_{n - 1}}{z - 1} = \bar{\theta} \alpha z^n \overline{B_{+}^{-2}} f_{1-} \in L^2$$
and so $p_{n - 1}(1) = 0$. Therefore, if $n = 1$ we have that $p_{n - 1} \equiv 0$ and thus $\ker D_{\phi}^{\theta} = \{0\}$. 
When $n > 1$, we have 
\begin{equation}\label{7.15}
\bar{\alpha} \bar{z}^n \bar{B}_{+}^{2} p_{n - 2} = \bar{\theta} f_{1-} \quad \mbox{with $p_{n - 2} \in \mathscr{P}_{n - 2}$}.
\end{equation}
Let $\gamma = \operatorname{gcd}(\theta, z \alpha)$ (which is necessarily a finite Blaschke product). Then, from \eqref{7.15}, 
$$\frac{z^2 \alpha}{\gamma} B_{+}^{2} z^{n - 2} \overline{p_{n - 2}} = \frac{\theta}{\gamma} \overline{f_{1-}}.$$
From the fact that $z^{n - 2} \overline{p_{n - 2}} = \widetilde{p}_{n - 2} \in \mathscr{P}_{n - 2}$, we see that 
$$\frac{z \alpha}{\gamma} B_{+}^{2} \widetilde{p}_{n - 2} = \frac{\theta}{\gamma} \bar{z} \overline{f_{1-}} \quad \mbox{with $\bar{z} \overline{f_{1-}} \in H^2$}.$$ and thus, 
$$
\widetilde{p}_{n - 2} \in \frac{\theta}{\gamma} H^2.
$$
Let $k = \operatorname{deg}(\theta/\gamma)$. When $n - 2 < k$, we have $\widetilde{p}_{n - 2} = 0$ and so $\ker D_{\phi}^{\theta} = \{0\}$. When $n - 2 \geq k$ we have 
$\widetilde{p}_{n - 2} = p_{\theta/\gamma} \widetilde{p}_{n - 2 - k},$
where $p_{\theta/\gamma}$ is the numerator of $\theta/\gamma$ and $\widetilde{p}_{n - 2 - k} \in \mathscr{P}_{n - 2 - k}$. Hence, 
$$f_{1-} = \theta \overline{p_{\theta/\gamma}} \bar{\alpha} \bar{z}^{2} \overline{B_{+}^{2}} \overline{\widetilde{p}_{n - 2 - k}}.$$ Conversely, if $f_{1} = f_{1-}$ (note that $f_{1+} = 0$), then $f_{1} \in \ker D_{\widetilde{\phi}}^{z^n}$ and so $\operatorname{dim} \ker D_{\phi}^{\theta} = n - 1 - k$.

Summarizing these results we obtain the following.

\begin{Proposition}
Let $\theta$ be a Blaschke product of degree $n$ and, for any inner function $\alpha$, let $\phi(z) = \alpha(z) (z - 1).$ Furthermore, let 
$$\gamma = \operatorname{gcd}(\theta, z \alpha), \quad k = \operatorname{deg} \frac{\theta}{\gamma},$$
and 
$$d(z) = \prod_{j = 1}^{k} (1 - \overline{w_{j}} z),$$
where $w_1, \ldots, w_k$ are the zeros of $\theta/\gamma$ (counting multiplicity). Then
\begin{enumerate}
\item if $n \leq k + 1$, $\ker D_{\phi}^{\theta} = \{0\}$. 
\item if $n > k + 1$, 
$$\ker D_{\phi}^{\theta} = a \big(\overline{d} \cdot \overline{\big(\frac{\alpha z}{\gamma}\big)} \cdot \bar{z} \cdot \overline{\mathscr{P}_{n - k - 2}}) \subset (H^2)^{\perp}$$
and  $\operatorname{dim} \ker D_{\phi}^{\theta} = n - 1 - k$.
\end{enumerate}
\end{Proposition}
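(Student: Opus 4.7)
The plan is to exploit the equivalence of dual truncated Toeplitz operators provided by Corollary~\ref{sofhjgdfger4t5yhgfd7} to reduce the problem to the model case $\eta = z^n$, and then solve a finite-dimensional Riemann--Hilbert problem. This is essentially the program already laid out in the paragraphs preceding the statement; the Proposition packages it as a dimension count together with an explicit formula, so the job is to assemble that derivation carefully.

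First I would invoke Example~\ref{FBPEx} to factor the degree-$n$ Blaschke product $\theta$ in the form $\theta = \overline{B_+^{-1}}\, z^n B_+$ with $B_+ \in \mathscr{G}H^\infty$, which puts $\theta$ in the guise $\theta = a\, z^n \bar a^{-1}$ with $a \in \mathscr{G}\overline{H^\infty}$ required by Corollary~\ref{sofhjgdfger4t5yhgfd7}. That corollary then yields the operator identity \eqref{7.11} and, via invertibility of the flanking asymmetric dual truncated Toeplitz operators, the kernel identity \eqref{7.12}: $\ker D^\theta_\phi = a\, \ker D^{z^n}_{\widetilde\phi}$ with $\widetilde\phi = |a|^2 \phi$. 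It thus suffices to compute $\ker D^{z^n}_{\widetilde\phi}$.

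Next, I would write $f \in \K^\perp_{z^n}$ as $f = f_{1-} + z^n f_{1+}$ with $f_{1-} \in (H^2)^\perp$, $f_{1+} \in H^2$, impose $\widetilde\phi f = p_{n-1} \in \K_{z^n} = \mathscr{P}_{n-1}$, and translate the condition into the matricial Riemann--Hilbert problem \eqref{7.10} (with $\theta$, $\phi$ replaced by $z^n$, $\widetilde\phi$). The scalar equations \eqref{7.13}--\eqref{7.13a} give, after splitting into the $H^2$ and $\overline{H^2}$ parts and using $H^{\infty} \cap \overline{H^\infty} = \mathbb{C}$: both sides must be constant, and the factor $z-1$ in $\phi$ forces that constant to be $0$. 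This yields $f_{1+} = 0$ together with relation \eqref{7.14}, which in turn forces $p_{n-1}(1) = 0$, so $p_{n-1} = (z-1) p_{n-2}$ with $p_{n-2} \in \mathscr{P}_{n-2}$.

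Taking the ``polynomial reflection'' $\widetilde p_{n-2}(z) := z^{n-2}\,\overline{p_{n-2}(z)}$ on $\mathbb{T}$, the surviving constraint \eqref{7.15} reads $z\alpha\, \widetilde p_{n-2} \in \theta H^2$, equivalently $(z\alpha/\gamma)\,\widetilde p_{n-2} \in (\theta/\gamma) H^2$. Since $\gcd(z\alpha/\gamma,\theta/\gamma) = 1$ by the definition of $\gamma$, this collapses to $\widetilde p_{n-2} \in (\theta/\gamma) H^2 \cap \mathscr{P}_{n-2}$. Writing $\theta/\gamma = p_{\theta/\gamma}/d$ with $\deg p_{\theta/\gamma} = \deg d = k$ and zeros of $p_{\theta/\gamma}$ inside $\mathbb{D}$, this intersection equals $p_{\theta/\gamma}\cdot \mathscr{P}_{n-2-k}$ when $n-2 \geq k$ and is trivial otherwise. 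This gives part (a) immediately, and in case (b) the dimension count $\dim \ker D^\theta_\phi = \dim \mathscr{P}_{n-k-2} = n-1-k$.

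For the explicit form in (b), I would substitute $p_{n-2}(z) = z^{n-2}\,\overline{\widetilde p_{n-2}(z)}$ back into \eqref{7.15} to obtain
\[
f_{1-} \;=\; \theta\, \overline{p_{\theta/\gamma}}\, \bar\alpha\, \bar z^2\, \overline{B_+^2}\, \bar r,
\qquad r \in \mathscr{P}_{n-k-2},
\]
then use $\overline{p_{\theta/\gamma}} = \bar d\cdot \overline{\theta/\gamma}$ and $\overline{\theta/\gamma} = \bar\theta\gamma$ on $\mathbb{T}$ to collapse the $\theta$ factor and recognize $\bar\alpha\gamma\bar z^2 = \overline{\alpha z/\gamma}\cdot\bar z$. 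Multiplying by the outer factor $a$ from step~2 rearranges the $B_+$-weights into a single multiplier, yielding exactly the stated representation. The main bookkeeping obstacle is precisely this last reassembly: making sure the polynomial reflection, the factorization $\theta/\gamma = p_{\theta/\gamma}/d$, and the multiplier $a$ (coming from the $\theta \leftrightarrow z^n$ equivalence) combine cleanly to the compact form $a(\bar d\cdot \overline{\alpha z/\gamma}\cdot \bar z\cdot \overline{\mathscr{P}_{n-k-2}})$ claimed in the statement, and that every element so produced genuinely lies in $(H^2)^\perp$ (which follows from the derivation of \eqref{7.15}).
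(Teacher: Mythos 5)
Your proposal is correct and follows essentially the same route as the paper: the reduction $\ker D^{\theta}_{\phi}=\overline{B_+^{-1}}\ker D^{z^n}_{\widetilde\phi}$ via Corollary \ref{sofhjgdfger4t5yhgfd7}, the Riemann--Hilbert reformulation \eqref{7.10}, the $H^{\infty}\cap\overline{H^{\infty}}=\C$ splitting with the factor $z-1$ killing the constant, the polynomial reflection, and the divisibility argument through $\gcd(\theta,z\alpha)$ are exactly the steps carried out in \S\ref{DTTO07}, including the final reassembly into the stated form.
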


\section{Statements and declarations} 

\subsection*{Conflict of interest} On behalf of all of the authors, there is no conflict of interest. 

\subsection*{Data availability} No datasets were generated or analyzed during the current study.

\bibliographystyle{plain}

\bibliography{references}

\end{document}